\newcommand*{\mailto}[1]{\href{mailto:#1}{\nolinkurl{#1}}}
\newcommand{\msc}[1]{\href{http://www.ams.org/msc/msc2010.html?t=&s=#1}{#1}}
\newtheorem{theorem}{Theorem}[section]
\newtheorem{lemma}[theorem]{Lemma}
\newtheorem{corollary}[theorem]{Corollary}
\newtheorem{definition}[theorem]{Definition}
\newtheorem{remark}[theorem]{Remark}
\newcommand{\R}{{\mathbb R}}
\newcommand{\N}{{\mathbb N}}
\newcommand{\C}{{\mathbb C}}
\newcommand{\cRH}{{\mathcal R}_{H}}
\newcommand{\id}{{\mathbbm{1}}}
\newcommand{\OO}{{\mathcal O}}
\newcommand{\cW}{{\mathcal W}}
\newcommand{\cR}{{\mathcal R}}
\newcommand{\loc}{\mathrm{loc}}
\newcommand{\I}{\mathrm{i}}
\newcommand{\E}{\mathrm{e}}
\DeclareMathOperator{\re}{Re}
\DeclareMathOperator{\im}{Im}
\newcommand{\nn}{\nonumber}
\newcommand{\be}{\begin{equation}}
\newcommand{\ee}{\end{equation}}
\newcommand{\ti}{\tilde}
\newcommand{\abs}[1]{\left\lvert #1 \right\rvert}
\newcommand{\abss}[1]{\lvert #1 \rvert}
\newcommand{\norm}[1]{\left\lVert #1 \right\rVert}
\newcommand{\inner}[2]{\left\langle#1,#2\right\rangle}
\newcommand{\lam}{\lambda}
\numberwithin{equation}{section}
\newcommand{\hyp}[5]{\,\mbox{}_{#1}F_{#2}\!\left(
  \genfrac{}{}{0pt}{}{#3}{#4};#5\right)}
\newcommand{\dlmf}[1]{%
\cite[%
  \def\nextitem{\def\nextitem{, }}%
  \@for \el:=#1\do{\nextitem\href{http://dlmf.nist.gov/\el}{(\el)}}%
]{dlmf}%
}
\begin{document}

\title[Dispersion Estimates]{Dispersion Estimates for Spherical Schr\"odinger Equations with Critical Angular Momentum}

\dedicatory{To Helge Holden, inspiring colleague and friend, on the occasion of his 60th birthday}

\author[M. Holzleitner]{Markus Holzleitner}
\address{Faculty of Mathematics\\ University of Vienna\\
Oskar-Morgenstern-Platz 1\\ 1090 Wien\\ Austria}
\email{\mailto{amhang1@gmx.at}}

\author[A. Kostenko]{Aleksey Kostenko}
\address{Faculty of Mathematics\\ University of Vienna\\
Oskar-Morgenstern-Platz 1\\ 1090 Wien\\ Austria}
\email{\mailto{duzer80@gmail.com};\mailto{Oleksiy.Kostenko@univie.ac.at}}
\urladdr{\url{http://www.mat.univie.ac.at/~kostenko/}}

\author[G. Teschl]{Gerald Teschl}
\address{Faculty of Mathematics\\ University of Vienna\\
Oskar-Morgenstern-Platz 1\\ 1090 Wien\\ Austria\\ and International 
Erwin Schr\"odinger Institute for Mathematical Physics\\ 
Boltzmanngasse 9\\ 1090 Wien\\ Austria}
\email{\mailto{Gerald.Teschl@univie.ac.at}}
\urladdr{\url{http://www.mat.univie.ac.at/~gerald/}}

\thanks{{\it Research supported by the Austrian Science Fund (FWF) 
under Grants No.\ P26060 and W1245.}}
\thanks{in Partial Differential Equations, Mathematical Physics, and Stochastic Analysis, F.\ Gesztesy et al. (eds), 319--347, EMS Congress Reports {\bf 14}, 2018}

\keywords{Schr\"odinger equation, dispersive estimates, scattering}
\subjclass[2010]{Primary \msc{35Q41}, \msc{34L25}; Secondary \msc{81U30}, \msc{81Q15}}

\begin{abstract}
We derive a dispersion estimate for one-dimensional perturbed radial Schr\"o\-din\-ger operators,
where the angular momentum takes the critical value $l=-\frac{1}{2}$.
We also derive several new estimates for solutions of the underlying differential equation and
investigate the behavior of the Jost function near the edge of the continuous spectrum.
\end{abstract}

\maketitle

\section{Introduction}

The stationary one-dimensional radial Schr\"odinger equation
\begin{equation} \label{Schrl}
  \I \dot \psi(t,x) = H \psi(t,x), \quad 
  H:= - \frac{d^2}{dx^2} + \frac{l(l+1)}{x^2} + q(x),\quad 
  (t,x)\in\R\times \R_+,
\end{equation}
is a well-studied object in quantum mechanics. Starting from the Schr\"odinger equation with a spherically symmetric potential in three dimensions, one obtains
\eqref{Schrl} with $l$ a nonnegative integer. However, other dimensions will lead to different values for $l$ (see e.g.\ \cite[Sect.~17.F]{wdln}). In particular, the half-integer values arise in two dimensions and hence are equally important. Moreover, the
integer case is typically not more difficult than the case $l>-\frac{1}{2}$ but the borderline case $l=-\frac{1}{2}$ usually imposes additional technical problems.
 For example in \cite{ktt} we 
investigated the dispersive properties of the associated radial Schr\"odinger equation, but were not able to cover the case $l=-\frac{1}{2}$. This was
also partly due to the fact that several results we relied upon were only available for the case $l>-\frac{1}{2}$. The
present paper aims at filling this gap by investigating
\begin{equation} \label{Schr}
  \I \dot \psi(t,x) = H \psi(t,x), \quad 
  H:= - \frac{d^2}{dx^2} - \frac{1}{4x^2} + q(x),\quad 
  (t,x)\in\R\times \R_+,
\end{equation}
with real locally integrable potential $q$. We will use $\tau$ to describe the formal Sturm--Liouville differential expression and
$H$ the self-adjoint operator acting in $L^2(\R_+)$ and given by $\tau$ together with the Friedrichs boundary condition at $x=0$:
\be\label{eq:bc}
\lim_{x\to0} W(\sqrt{x},f(x))=0.
\ee

More specifically, our goal is to provide dispersive 
decay estimates for these equations. To this end we
recall that under the assumption
\[
\int_0^\infty x(1+|\log(x)|) |q(x)| dx<\infty
\] 
the operator $H$ has a purely absolutely continuous spectrum
on $[0,\infty)$ plus a finite number of eigenvalues in $(-\infty,0)$  (see, e.g., \cite[Theorem 5.1]{seto} and \cite[Sect.~9.7]{tschroe}). 

Then our main result reads as follows:
\begin{theorem}\label{Main}
Assume that 
\be\label{eq:q_hyp}
\int_0^1 |q(x)| dx <\infty \quad \text{and} \quad  
\int_1^\infty x\log^2(1+x) |q(x)|dx<\infty,
\ee
and suppose there is no resonance at $0$ (see Definition \ref{def:resonance}). 
Then the following decay holds
\begin{equation}\label{full}
\norm{\E^{-\I tH}P_c(H)}_{L^1(\R_+)\to L^\infty(\R_+)}
	= \mathcal{O}(|t|^{-1/2}),
\quad t\to\infty.
\end{equation}
Here $P_c(H)$ is the orthogonal projection in $L^2(\R_+)$ onto 
the continuous spectrum of $H$.
\end{theorem}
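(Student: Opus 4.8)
The plan is to reduce \eqref{full} to the decay of an oscillatory integral and then apply a van der Corput type estimate. Writing $\lambda=k^2$, let $\phi(k,x)$ be the regular solution of $\tau u=k^2u$ obeying the Friedrichs condition \eqref{eq:bc}, let $f(k,x)\sim\E^{\I kx}$ as $x\to\infty$ be the Jost solution, and let $W(k)$ denote the Jost function (the Wronskian of $f$ and $\phi$). Stone's formula together with the explicit resolvent kernel then expresses the integral kernel of $\E^{-\I tH}P_c(H)$, up to a normalization constant, as
\be\label{eq:repr}
[\E^{-\I tH}P_c(H)](x,y)=\frac{2}{\pi}\int_0^\infty \E^{-\I tk^2}\,\frac{\phi(k,x)\,\phi(k,y)}{\abs{W(k)}^2}\,k\,dk,
\ee
so that \eqref{full} is equivalent to the uniform bound $\sup_{x,y>0}\abss{[\E^{-\I tH}P_c(H)](x,y)}=\OO(\abs{t}^{-1/2})$.

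To obtain a bound uniform in $x,y$, and to cope with the oscillation of $\phi(k,x)$ as $x\to\infty$, I would insert the representation
\be\label{eq:phidecomp}
\phi(k,x)=\frac{\ol{W(k)}\,f(k,x)-W(k)\,\ol{f(k,x)}}{2\I k}
\ee
into \eqref{eq:repr} and expand the product $\phi(k,x)\phi(k,y)$. This yields four terms carrying the slowly varying factors $f(k,x)\E^{-\I kx}$ and the exponentials $\E^{\pm\I k(x\pm y)}$, so that the total phase becomes $-tk^2\pm k(x\pm y)$. After completing the square this phase has stationary point $(x\pm y)/(2t)$ and constant second derivative $2t$, whence a van der Corput estimate produces the factor $\abs{t}^{-1/2}$, provided the remaining amplitudes — assembled from the scattering coefficient $\ol{W(k)}/W(k)$, the density $1/\abs{W(k)}^2$, and the factors $f(k,x)\E^{-\I kx}$ — are bounded with $L^1$-integrable $k$-derivative, uniformly in $x,y>0$. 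On the high-energy region $k\ge1$ the factor $1/k$ arising from \eqref{eq:phidecomp} is harmless, so this reduction is immediate there.

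The bounds on $f(k,x)$, $W(k)$ and their $k$-derivatives needed above are read off from the differential equation. I would represent $f(k,x)$ through the usual Volterra integral equation against the free Jost solution, which for $l=-\frac12$ is a multiple of $\sqrt{x}\,H_0^{(1)}(kx)$, and estimate the iterates using the hypotheses \eqref{eq:q_hyp}; the free regular and singular solutions are $\sqrt{x}\,J_0(kx)$ and $\sqrt{x}\,Y_0(kx)$. For $k\ge1$ the large-argument asymptotics of the Bessel functions, together with the fact that $W(k)$ stays bounded away from $0$ as $k\to\infty$, then give the required uniform bounds in a routine manner.

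The main obstacle is the low-energy regime $k\le1$, and it is exactly here that the critical value $l=-\frac12$ departs from the range $l>-\frac12$ treated in \cite{ktt}. The free singular solution $\sqrt{x}\,Y_0(kx)$ carries the logarithmic singularity $Y_0(z)\sim\frac{2}{\pi}\log(z/2)$, which propagates into the small-$k$ expansions of $f(k,x)$ and of the Jost function near the edge of the continuous spectrum. The no-resonance hypothesis of Definition \ref{def:resonance} is precisely the condition ensuring that $W(k)$ does not vanish as $k\to0$, so that $1/\abs{W(k)}^2$ and the scattering coefficient stay bounded down to the threshold. Here the factor $1/k$ in \eqref{eq:phidecomp} is no longer harmless, and one must keep the cancellation that forces the four-term sum to vanish as $k\to0$; simultaneously one has to derive expansions of $f(k,x)$ and $W(k)$ as $k\to0$ that are uniform in $x>0$ and track the logarithmic terms, and then verify that, despite these logarithms, the $k$-derivatives of the resulting amplitudes remain integrable near $k=0$ uniformly in $x,y$. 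I expect this uniform control of the Jost function and of the solutions near the threshold to be the analytical core of the proof, and it is exactly what was missing for $l=-\frac12$.
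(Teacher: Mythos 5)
Your proposal is a roadmap whose analytical core is left as a declared expectation rather than carried out, and the one concrete claim you make about the threshold is wrong for the critical case $l=-\frac12$. You assert that the no-resonance hypothesis ``ensures that $W(k)$ does not vanish as $k\to 0$, so that $1/\abs{W(k)}^2$ \dots{} stays bounded''. In fact, for $l=-\frac12$ the Jost function vanishes like $\sqrt{k}$ at the threshold in every case (already $f_{-\frac12}(k)=\sqrt{k}\,\E^{-\I\pi/4}$ for the free problem), and the relevant renormalized object is $F(k)=f(k)/f_{-\frac12}(k)$. By Lemma~\ref{lem:Fat0}, absence of a resonance (i.e.\ $\phi(0,x)/\sqrt{x}$ unbounded at infinity) is equivalent to $F(k)=\log(k^2)(c+o(1))$ with $c\neq 0$, so $1/\abs{F(k)}^2$ \emph{vanishes} like $1/\log^2 k$; it is precisely this logarithmic decay, combined with $\abs{F'(k)}\le C/\abs{k}$ from Lemma~\ref{lem:Fp}, that gives $\bigl|\tfrac{d}{dk}\abs{F(k)}^{-2}\bigr|\lesssim \abs{k}^{-1}\abs{\log k}^{-3}$ and hence puts $\chi(k^2)\abs{F(k)}^{-2}$ into the Wiener algebra (Theorem~\ref{thm:le2}). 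In the resonant case $F(0)$ is finite and nonzero, the bound $\abs{F'(k)}\le C/\abs{k}$ is then not integrable at $0$, and the argument breaks down --- that, and not a vanishing of $W$, is why the resonant case is excluded.

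The uniformity mechanism you propose --- that each of the four terms obtained from writing $\phi$ in terms of $f(\pm k,\cdot)$ has bounded amplitude with $k$-derivative in $L^1$ uniformly in $x,y>0$ --- fails near $k=0$: the free amplitude $\E^{\I\pi/4}k^{-1/2}f_{-\frac12}(k,x)\E^{-\I kx}$ has $k$-derivative of size $\sqrt{x}/\abs{k}$ for small $\abs{k}x$ (cf.\ \eqref{eq:partial-z-weyl-sol-free}, coming from $H_1^{(1)}(kx)\sim -2\I/(\pi kx)$), which is neither integrable at $k=0$ nor uniform in $x$. You acknowledge that a cancellation among the four terms must be exploited, but you do not exhibit it, and that is where the entire difficulty sits. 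The paper sidesteps this by a different organization: for $\max(x,y)\ge 1$ it writes $\phi(k^2,x)=(I+B_x)\phi_{-\frac12}(k^2,x)$, resp.\ $(I+K_x)\phi_{-\frac12}(k^2,x)$, with transformation operators bounded on $L^\infty$ (Lemmas~\ref{lem:toGL} and~\ref{lem:to}), reduces to the free amplitude $\sqrt{\abs{k}x}\,J_0(\abs{k}x)$, and uses that the latter is the Fourier transform of a finite measure whose total variation is independent of $x$ by scaling --- so no $k$-derivative of the $x$-dependent factors is ever needed. Finally, the high-energy regime, which you dismiss as routine, is handled by a Born series \`a la Goldberg--Schlag (Theorem~\ref{thm:he}); this is where the hypothesis $\int_0^1\abs{q}<\infty$, strictly stronger than \eqref{q:hyp}, enters. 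As written, your proposal identifies the right obstruction but neither resolves it nor correctly describes the threshold mechanism supplied by the non-resonance assumption.
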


Such dispersive estimates for Schr\"odinger equations have a long tradition and here we refer to a brief selection of articles \cite{bpstz1,bpstz2,EKMT,GS,HKT,K10,ktt,kotr,schlag,wed,wed2}, where further references can be found.
We will show this result by establishing a corresponding low energy result, Theorem~\ref{thm:le2} (see also Theorem~\ref{thm:le1}), and a corresponding high energy result, Theorem~\ref{thm:he}. Our proof is based on the approach proposed in \cite{ktt}, however, the main technical difficulty is the analysis of the low and high energy behavior of the corresponding Jost function. Let us also mention that the potential $q\equiv0$ does not satisfy the conditions of Theorem \ref{Main}, that is, there is a resonance at $0$ in this case. However, it is known that the dispersive decay \eqref{full} holds true if $q\equiv 0$ \cite{kt} and hence Theorem \ref{Main} states that the corresponding estimate remains true under additive non-resonant perturbations.
For related results on scattering theory for such operators we refer to \cite{bg1,bg2}.

Finally, let us briefly describe the content of the paper. Section \ref{wa-sec} is of preliminary character, where we collect and derive some necessary estimates for solutions, the Green's function and the high and low energy behavior of the Jost function \eqref{eq:JostFunct}. However, we would like to emphasize that the behavior of the Jost function near the bottom of the essential spectrum is still not understood
satisfactorily, and for this very reason the resonant case had to be excluded from our main theorem. The proof of Theorem \ref{Main} is given in Section \ref{ll-sec}. In order to make the exposition self-contained, we gathered the appropriate version of the van der Corput lemma and necessary facts on the Wiener algebra in Appendix \ref{sec:vdCorput}. Appendix \ref{sec:Bessel} contains relevant facts about Bessel and Hankel functions. 

\section{Properties of solutions}
\label{wa-sec}

In this section we will collect some properties of the solutions of the underlying
differential equation required for our main results.

\subsection{The regular solution}

Suppose that
\be\label{q:hyp}
q\in L^1_\loc(\R_+)\quad \text{and} \quad \int_0^1 x\big(1-\log(x) \big)|q(x)|dx <\infty.
\ee
Then the ordinary differential equation 
\[
\tau f= z f,\qquad \tau := - \frac{d^2}{dx^2} - \frac{1}{4x^2} + q(x),
\]
has a system of solutions $\phi(z,x)$ and $\theta(z,x)$ which are real 
entire with respect to $z$ and such that 
\be\label{eq:fs01}
\phi(z,x) 
	= \sqrt{\frac{\pi x}{2}} \ti{\phi}(z,x),\quad  
\theta(z,x)	=-\sqrt{\frac{2x}{\pi}}\log(x)\ti{\theta}(z,x),
\ee
where $\ti{\phi}(z,\cdot)\in W^{1,1}[0,1]$,  $\ti{\theta}(z,\cdot)\in C[0,1]$ and 
$\ti{\phi}(z,0)=\ti{\theta}(z,0)=1$. Moreover, we can choose $\theta(z,x)$ such that $\lim_{x\to 0}W(\sqrt{x}\log(x),\theta(z,x))=0$ for all $z\in\C$.  Here $W(u,v) = u(x)v'(x) - u'(x)v(x)$ is the usual Wronski determinant.
For a detailed construction of these 
solutions we refer to, e.g., \cite{kt}.
 
We start with two lemmas containing estimates for the Green's function 
of the unperturbed equation 
\[ 
G_{-\frac{1}{2}}(z,x,y) = \phi_{-\frac{1}{2}}(z,x) \theta_{-\frac{1}{2}}(z,y) - \phi_{-\frac{1}{2}}(z,y) \theta_{-\frac{1}{2}}(z,x)
\]
and the regular solution $\phi(z,x)$ 
(see, e.g., \cite[Lemmas~2.2, A.1, and A.2]{kst}).  Here 
\be\label{eq:phi0}
\phi_{-\frac{1}{2}}(z,x) 
	=  \sqrt{\frac{\pi x}{2}} 
	  J_{0}(\sqrt{z} x),
\quad 
\theta_{-\frac{1}{2}}(z,x) 
= \sqrt{\frac{\pi x}{2}} \left(
		\frac{1}{\pi}\log(z)J_{0}(\sqrt{z} x)
	    -Y_{0}(\sqrt{z} x)\right), 
	    \ee
where $J_{0}$ and $Y_{0}$ are the usual Bessel and Neumann 
functions (see Appendix \ref{sec:Bessel}). All branch
cuts are chosen along the negative real axis unless explicitly stated
otherwise.

The first two results are essentially from \cite[Appendix~A]{kst}. However, since the focus there was on a finite
interval, some small adaptions are necessary to cover the present case of a half-line.

\begin{lemma}[\cite{kst}]\label{lem:b.1}
The following estimates hold:
\begin{align}\label{estphil}
\abs{\phi_{-\frac{1}{2}}(k^2,x)} 
	&\le C \left(\frac{x}{1+\abs{k}x}\right)^{\frac12} \E^{\abs{\im\, k} x},\\
\abs{\theta_{-\frac{1}{2}}(k^2,x)} 
	&\le C \left(\frac{x}{1+\abs{k}x}\right)^{\frac12} \left(1+\abs{\log\left(\frac{1+\abs{k}x}{x}\right)}\right) \E^{\abs{\im\, k} x}, \label{estthetal}
\end{align}
for all $x>0$, and
\begin{align}\label{estGl}
\abs{G_{-\frac{1}{2}}(k^2,x,y)} \leq C \left(\frac{x}{1+ |k| x}\right)^{\frac{1}{2}}\left(\frac{y}{1+ |k| y}\right)^{\frac{1}{2}}
\left( 1+ \log\Big(\frac{x}{y}\Big)\right)\E^{ \abs{\im\, k} (x-y)}
\end{align}
for all $0< y \leq x<\infty$. 
\end{lemma}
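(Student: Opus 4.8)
The plan is to reduce everything to uniform bounds on the Bessel functions $J_0$ and $Y_0$, since for the unperturbed ($q\equiv0$) problem the relevant objects are given explicitly by \eqref{eq:phi0}. Writing $k=\sqrt z$ so that $\sqrt z\,x=kx$, the estimate \eqref{estphil} is just a restatement of a uniform bound on $\sqrt x\,J_0(kx)$, and \eqref{estthetal} a bound on the combination $\sqrt x\big(\frac1\pi\log(k^2)J_0(kx)-Y_0(kx)\big)$. For the Green's function the crucial observation is that, upon inserting \eqref{eq:phi0} into the definition of $G_{-\frac12}$, the two $\log(z)$ contributions cancel, leaving the closed form
\[
G_{-\frac12}(k^2,x,y)=\frac{\pi\sqrt{xy}}{2}\big(J_0(ky)Y_0(kx)-J_0(kx)Y_0(ky)\big),
\]
which is entire in $z$. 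I emphasize that one should \emph{not} estimate $G_{-\frac12}$ by multiplying the bounds \eqref{estphil} and \eqref{estthetal}: that would produce the factor $\E^{\abs{\im k}(x+y)}$, whereas the cancellation in this cross-combination is exactly what reduces the exponential growth to the required $\E^{\abs{\im k}(x-y)}$.

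The Bessel bounds I would invoke (from Appendix \ref{sec:Bessel}) are the standard uniform estimates $\abs{J_0(w)}\le C(1+\abs w)^{-1/2}\E^{\abs{\im w}}$ and $\abs{Y_0(w)}\le C(1+\abs w)^{-1/2}\big(1+\abs{\log\abs w}\,\mathbbm{1}_{\abs w\le1}\big)\E^{\abs{\im w}}$, valid for $w$ off the negative real axis, which follow from the power series near $w=0$ and the Hankel asymptotics for large $\abs w$. With these, \eqref{estphil} is immediate after splitting into $\abs kx\le1$ (where $(x/(1+\abs kx))^{1/2}\simeq x^{1/2}$ and $\abs{J_0(kx)}\le C\E^{\abs{\im k}x}$) and $\abs kx\ge1$ (where $(x/(1+\abs kx))^{1/2}\simeq\abs k^{-1/2}$ matches the $\abs{kx}^{-1/2}$ decay of $J_0$). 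For \eqref{estthetal} the same dichotomy applies; the one point needing care is that the apparent $\log k$ singularity of $\frac1\pi\log(k^2)J_0$ is cancelled by the $\frac2\pi\log(kx/2)$ term inside $Y_0$ as $kx\to0$, leaving a genuine $\log x$ contribution that produces the factor $1+\abs{\log((1+\abs kx)/x)}$.

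For the Green's function I would run a case analysis in the three regions determined by $\abs ky\le\abs kx$. When both $\abs kx\le1$, the small-argument expansions give $J_0(ky)Y_0(kx)-J_0(kx)Y_0(ky)\approx\frac2\pi\log(x/y)$, so $\abs{G_{-\frac12}}\lesssim\sqrt{xy}\,(1+\log(x/y))$, matching the target since both bracketed prefactors are $\simeq x^{1/2},y^{1/2}$. When both $\abs ky\ge1$, inserting the Hankel asymptotics for all four factors yields $G_{-\frac12}(k^2,x,y)\approx\frac{\sin(k(x-y))}{k}$, whence $\abs{G_{-\frac12}}\lesssim\abs k^{-1}\E^{\abs{\im k}(x-y)}$, which is exactly the product of prefactors (the log factor being $\ge1$ here). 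The remaining mixed case $\abs ky\le1\le\abs kx$ is where the logarithm is essential: using $J_0(ky)\approx1$, $Y_0(ky)\approx\frac2\pi\log(ky)$ together with the large-argument decay of $J_0(kx),Y_0(kx)$ gives $\abs{G_{-\frac12}}\lesssim(y/\abs k)^{1/2}\big(1+\abs{\log(\abs ky)}\big)\E^{\abs{\im k}x}$, and since $\abs kx\ge1$ forces $1/(\abs ky)\le x/y$, the factor $1+\abs{\log(\abs ky)}$ is controlled by $1+\log(x/y)$, as required.

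The main obstacle is not any single estimate but the bookkeeping: keeping the constant $C$ uniform as one crosses between the small- and large-argument regimes (so that the two-sided comparisons $(x/(1+\abs kx))^{1/2}\simeq x^{1/2}$ resp.\ $\abs k^{-1/2}$ are used consistently), and correctly tracking the exponential factors through the complex $k$-plane. In the mixed regime the bound naturally carries $\E^{\abs{\im k}x}$, but $\abs{\im k}y\le\abs ky\le1$ lets one replace this by $\E^{\abs{\im k}(x-y)}$ at the cost of a constant. The most delicate point is the reduction of the exponential growth from $\E^{\abs{\im k}(x+y)}$ to $\E^{\abs{\im k}(x-y)}$ in the Green's function: this requires exploiting the precise cancellation in $J_0(ky)Y_0(kx)-J_0(kx)Y_0(ky)$ rather than estimating each product separately, which in the large-argument regime is transparent from the trigonometric identity behind $\sin(k(x-y))/k$ but in the mixed regime must be read off from the explicit small-argument form of the $y$-factors.
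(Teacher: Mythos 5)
Your proposal is correct and follows essentially the same route as the paper: the first two bounds come from the small- and large-argument asymptotics of $J_0$ and $Y_0$ (with the $\log k$ cancellation in $\theta_{-\frac12}$), and the Green's function is estimated from its explicit cross-combination form via the same three-case split in $\abs{k}y\le\abs{k}x$, using the Hankel-function asymptotics in the large-argument regime to capture the $\E^{\abs{\im k}(x-y)}$ factor and the inequality $-\log(\abs{k}y)\le\log(x/y)$ in the mixed regime. Your explicit remarks on why one must not simply multiply \eqref{estphil} and \eqref{estthetal}, and on absorbing $\E^{\abs{\im k}y}\le\E$ in the mixed case, are correct points that the paper leaves implicit.
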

\begin{proof}
The first two estimates are clear from the asymptotic behavior of the Bessel function $J_0$ and the Neumann function $Y_0$ (see \eqref{eq:Jnu01}, \eqref{eq:Ynu01} and \eqref{eq:asymp-J-infty},  \eqref{eq:asymp-Y-infty}).

To consider the third one, first of all we have 
\begin{align}\label{Gdef}
\begin{split}
G_{-\frac12}(k^2,x,y) 
&= - \frac{\pi}{2}\sqrt{xy} \big[J_{0}(kx)Y_{0}(ky) 
   - J_{0}(ky)Y_{0}(kx)\big]
\\[1mm]
&= - \frac{\I\pi}{4}\sqrt{xy} \left[ H^{(1)}_{0}(kx)H^{(2)}_{0}(ky)    
   - H^{(1)}_{0}(ky)H^{(2)}_{0}(kx)\right].
   \end{split}
\end{align}

We divide the proof of \eqref{estGl} in three steps.

{\em Step (i):   $\abs{ky} \le \abs{kx} \le 1$.} Using the first equality in \eqref{Gdef} and employing \eqref{eq:Jnu01} and \eqref{eq:Ynu01}, we get
\[
\abs{G_{-\frac12}(k^2,x,y)} \le C \sqrt{xy} \left(1+\log\Big(\frac{|k|x}{|k|y}\Big) \right) = C \sqrt{xy} \left(1+\log\Big(\frac{x}{y}\Big) \right),
\]
which immediately implies \eqref{estGl}.

{\em Step (ii):  $\abs{ky}\le 1\le \abs{kx}$}. 
Using the asymptotics \eqref{eq:Jnu01}--\eqref{eq:asymp-Y-infty} from Appendix B, we get
\begin{align*}
\abs{G_{-\frac12}(k^2,x,y)}\le C \sqrt{xy} \sqrt{\frac{1}{|k|x}}\E^{ \abs{\im\, k} (x-y)}\left(1- \log(|k|y) \right).
\end{align*}
We arrive at \eqref{estGl} by noting that
\[
0<-\log(|k|y)\le \log(x/y)
\]
since $|k|y\le 1 \le |k|x$.

{\em Step (iii): $1\le\abs{ky}\le\abs{kx}$.} For the remaining case it suffices to use the
second equality in \eqref{Gdef} and \eqref{eq:asymp-H1-infty}--\eqref{eq:asymp-H2-infty} to arrive at
\[
 \abs{G_{-\frac12}(k^2,x,y)}\le C \sqrt{xy} \sqrt{\frac{1}{|k|x|k|y}}  \E^{ \abs{\im\, k} (x-y)} = \frac{C}{|k|}  \E^{ \abs{\im\, k} (x-y)} ,
\]
which implies the claim.
\end{proof}

\begin{lemma}[\cite{kst}]
\label{lem:phi}
Assume \eqref{q:hyp}. Then $\phi(z,x)$ satisfies the integral equation
\be \label{eq:phi_int}
\phi(z,x) = \phi_{-\frac12}(z,x) + \int_0^x G_{-\frac12}(z,x,y)  \phi(z,y) q(y) dy.
\ee
Moreover, $\phi(\cdot,x)$ is entire for every $x>0$ and satisfies the estimate
\begin{align}
\abs{\phi(k^2,x) - \phi_{-\frac12}(k^2,x)} 
	\leq &C \left(\frac{x}{1+ \abs{k} x}\right)^{\frac12} 
	\E^{\abs{\im\, k} x}  \nn\\
&\times \int_0^x \frac{y}{1+|k|y} \left(1+ \log\Big(\frac{x}{y}\Big)\right)|q(y)|dy\label{estphi}
\end{align}
for all $x>0$ and $k\in\C$.
\end{lemma}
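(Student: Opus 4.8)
The plan is to treat the integral equation \eqref{eq:phi_int} as the defining relation for $\phi(z,x)$ and establish both the solvability/entirety claim and the estimate \eqref{estphi} by the standard method of successive approximations (Volterra iteration). First I would set $\phi_0(z,x):=\phi_{-\frac12}(z,x)$ and define the iterates
\be
\phi_{n+1}(z,x) = \int_0^x G_{-\frac12}(z,x,y)\,\phi_n(z,y)\,q(y)\,dy,\qquad \phi(z,x)=\sum_{n=0}^\infty \phi_n(z,x).
\ee
Each $\phi_n(\cdot,x)$ is entire because $G_{-\frac12}(\cdot,x,y)$ and $\phi_{-\frac12}(\cdot,x)$ are entire, so once I prove that the series converges locally uniformly in $z$ and $x$, the entirety of $\phi(\cdot,x)$ follows by Weierstrass. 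The quantity I want to control is
\be
\Phi_n(k,x) := \abs{\phi_n(k^2,x)}\,\Big(\frac{x}{1+\abs{k}x}\Big)^{-\frac12}\E^{-\abs{\im\,k}x},
\ee
so that the desired bound on the tail $\sum_{n\ge 1}\phi_n$ reproduces exactly \eqref{estphi}.

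The heart of the argument is an inductive estimate fed by Lemma \ref{lem:b.1}. Inserting the Green's function bound \eqref{estGl} together with the inductive hypothesis on $\phi_n$ into the recursion, the exponential factors telescope (since $\E^{\abs{\im\,k}(x-y)}\E^{\abs{\im\,k}y}=\E^{\abs{\im\,k}x}$) and the prefactors $(x/(1+\abs{k}x))^{1/2}$ combine cleanly, leaving the weight
\be
M(x):=\int_0^x \frac{y}{1+\abs{k}y}\Big(1+\log\frac{x}{y}\Big)\abs{q(y)}\,dy
\ee
to accumulate. The key combinatorial point is that iterating the logarithmic kernel produces the bound $\abs{\phi_n}\lesssim \Phi_0\,M(x)^n/n!$, where the factorial gain comes from the nested Volterra integrals; one must check that the logarithmic factor $(1+\log(x/y))$ behaves subadditively enough under composition that the $n$-th iterate still carries only a single such factor absorbed into $M(x)^n$. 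Summing the series then yields $\abs{\phi(k^2,x)-\phi_{-\frac12}(k^2,x)}\le \Phi_0\,(\E^{M(x)}-1)$, and since \eqref{q:hyp} guarantees $M(x)<\infty$ with $M(x)\to 0$ as $x\to 0$, the first-order term $\Phi_0\,M(x)$ dominates for the stated estimate \eqref{estphi} (after absorbing the higher-order terms into the constant $C$ on any bounded region of $\abs{k}x$, and noting $M$ is bounded on the relevant range).

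I expect the main obstacle to be the bookkeeping of the logarithmic weight through the iteration. Unlike the classical ($l>-\frac12$) case where the kernel carries no logarithm, here both $G_{-\frac12}$ and the natural weight contain $\log(x/y)$ factors, and one has to verify that convolving the kernel against itself does not generate growing powers of the logarithm that would spoil the factorial decay. The clean way to handle this is to absorb the logarithm into a modified measure: one shows that $\int_y^x \frac{t}{1+\abs{k}t}(1+\log(x/t))(1+\log(t/y))\,dt$ is controlled by $(1+\log(x/y))$ times the single integral, using concavity/monotonicity of the logarithm, so that the inductive structure is preserved. Verifying the integral equation itself (that the constructed series indeed solves \eqref{eq:phi_int} and that $\phi$ satisfies $\tau\phi=z\phi$ with the correct behavior at $0$) is routine once convergence is in hand, and the entirety then follows immediately from local uniform convergence of entire functions.
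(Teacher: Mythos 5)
Your proposal follows essentially the same route as the paper: the same Volterra iteration $\phi=\sum_{n\ge0}\phi_n$ with $\phi_0=\phi_{-\frac12}$, the same inductive bound $\abss{\phi_n(k^2,x)}\le \frac{C^{n+1}}{n!}\big(\frac{x}{1+|k|x}\big)^{1/2}\E^{|\im k|x}M(x)^n$ driven by the Green's function estimate \eqref{estGl}, and the conclusion by summing the series. The logarithmic bookkeeping you single out as the main obstacle closes exactly by the monotonicity you invoke: since $\log(y/t)\le\log(x/t)$ for $t\le y\le x$, one has $M(y)\le\ti M(y):=\int_0^y\frac{t}{1+|k|t}\big(1+\log(x/t)\big)|q(t)|\,dt$, and the nested integrals collapse via $\int_0^x\ti M(y)^{\,n-1}\,d\ti M(y)=M(x)^n/n$, so no extra powers of the logarithm accumulate.
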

\begin{proof}
The proof is based on the successive iteration procedure. As in the proof of Lemma 2.2 in \cite{kst}, set
\[
\phi = \sum_{n=0}^\infty \phi_{n},\quad \phi_0=\phi_{-\frac12},\quad \phi_n(k^2,x):=\int_0^x G_{-\frac12}(k^2,x,y)\phi_{n-1}(k^2,y)q(y)dy
\]
for all $n\in\N$. The series is absolutely convergent since 
\be\label{eq:phi_n}
\begin{split}
\abs{{\phi}_n(k^2,x)}
\le \frac{C^{n+1}}{n!}&
	\left(\frac{ x}{1+|k|x}\right)^{\frac{1}{2}}
	\E^{\abss{\im k} x} \\
	&\times \left(\int_0^x \frac{y}{1+|k|y} \left(1+ \log\Big(\frac{x}{y}\Big)\right)|q(y)|dy\right)^n,\quad n\in\N.
	\end{split}
\ee
This is all we need to finish the proof of this lemma. 
\end{proof}

We also need the estimates for derivatives.

\begin{lemma}
\label{lem:part-z}
The following estimates
hold
\be\label{eq:partial-z-phil}
\abss{\partial_{k}{\phi}_{-\frac12}(k^2,x)}
\le C |k|x 
	\left(\frac{ x}{1+\abs{k}x}\right)^{\frac32}
	\E^{\abss{\im k} x}
\ee
for all $x>0$, and 
\begin{align} \label{eq:partial-z-Gl}
\begin{split}
\abs{\partial_k G_{-\frac12}(k^2,x,y)} \le  C |k|x \left(\frac{x}{1+ |k| x}\right)^{\frac{3}{2}} &\left( \frac{y}{1+ |k| y}\right)^{\frac{1}{2}}\\
&\times\left( 1+ \log\Big(\frac{x}{y}\Big)\right)\E^{ \abs{\im\, k} (x-y)},   
\end{split}
\end{align}
for all $0<y \le x<\infty$.
\end{lemma}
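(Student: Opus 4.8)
The plan is to reduce both bounds to the uniform estimates for the Bessel and Hankel functions collected in Appendix \ref{sec:Bessel}, using the differentiation identities $J_0'=-J_1$ and $Y_0'=-Y_1$. Estimate \eqref{eq:partial-z-phil} is the easy half: differentiating the closed form \eqref{eq:phi0} gives $\partial_k\phi_{-\frac12}(k^2,x)=-\sqrt{\pi x/2}\,x\,J_1(kx)$, so with $w=kx$ the claim follows immediately from $\abs{J_1(w)}\le C\abs{w}(1+\abs{w})^{-3/2}\E^{\abs{\im w}}$, which in turn is read off from \eqref{eq:Jnu01} for $\abs{w}\le1$ and from \eqref{eq:asymp-J-infty} for $\abs{w}\ge1$.

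For \eqref{eq:partial-z-Gl} I would differentiate the explicit representation \eqref{Gdef}. This yields
\[
\partial_k G_{-\frac12}(k^2,x,y)=\frac{\pi}{2}\sqrt{xy}\Big[x\,J_1(kx)Y_0(ky)+y\,J_0(kx)Y_1(ky)-y\,J_1(ky)Y_0(kx)-x\,J_0(ky)Y_1(kx)\Big],
\]
together with the analogous Hankel expression coming from the second equality in \eqref{Gdef}. I would then split the analysis into the same three regions used in the proof of Lemma \ref{lem:b.1}. In the two regions $\abs{ky}\le1\le\abs{kx}$ and $1\le\abs{ky}\le\abs{kx}$ the four terms can be bounded separately by the small-- and large--argument asymptotics exactly as in Steps (ii) and (iii) there; one checks that the extra factor $x$ or $y$ created by differentiation is precisely what promotes one factor $\big(x/(1+\abs{k}x)\big)^{1/2}$ to the power $3/2$ and produces the prefactor $\abs{k}x$, while the exponential $\E^{\abs{\im k}(x-y)}$ is controlled as before.

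The main obstacle is the remaining region $\abs{ky}\le\abs{kx}\le1$, where a term--by--term estimate breaks down. Indeed, for small argument $Y_1(w)\sim-\tfrac{2}{\pi w}$, so the second and fourth terms above are each of size $\abs{k}^{-1}\sqrt{xy}$, larger than the target \eqref{eq:partial-z-Gl} by the factor $\abs{kx}^{-2}\ge1$; these singular contributions must cancel. The cleanest way I see to make this cancellation manifest is to use that $G_{-\frac12}(z,x,y)=\phi_{-\frac12}(z,x)\theta_{-\frac12}(z,y)-\phi_{-\frac12}(z,y)\theta_{-\frac12}(z,x)$ is \emph{entire} in $z=k^2$, so that
\[
\partial_k G_{-\frac12}(k^2,x,y)=2k\,\partial_z G_{-\frac12}(z,x,y)\big|_{z=k^2}
\]
carries no $k^{-1}$ singularity at all, and it remains only to bound $\partial_z G_{-\frac12}$ for $\abss{z}\lesssim x^{-2}$. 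This I would do by a Cauchy estimate $\abs{\partial_z G_{-\frac12}(z,x,y)}\le r^{-1}\max_{\abss{\zeta-z}=r}\abs{G_{-\frac12}(\zeta,x,y)}$ on a circle of radius $r$ equal to a small fixed multiple of $x^{-2}$: the whole circle then stays in the regime $\abs{k}x=\OO(1)$, where \eqref{estGl} gives $\abs{G_{-\frac12}}\le C\sqrt{xy}\,(1+\log(x/y))$ with a bounded exponential, and multiplying by $2\abs{k}$ reproduces \eqref{eq:partial-z-Gl} in this region. Verifying that this radius keeps the circle inside the small--argument regime—so that the logarithmic factor from \eqref{estGl} is the correct one—is the main point requiring care.
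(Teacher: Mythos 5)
Your proposal is correct, and for the delicate region it takes a genuinely different route from the paper. The bound \eqref{eq:partial-z-phil} and the two regions $\abs{ky}\le1\le\abs{kx}$ and $1\le\abs{ky}\le\abs{kx}$ are handled exactly as in the paper: differentiate the explicit Bessel/Hankel representations (your formula for $\partial_kG_{-\frac12}$ agrees with \eqref{eq:partial-z-Gl-proof}) and estimate term by term via Appendix \ref{sec:Bessel}, using $1/\abs{k}\le x$ where needed. You also correctly diagnose the one real difficulty: for $\abs{ky}\le\abs{kx}\le1$ the two $Y_1$ terms are each of size $\sqrt{xy}/\abs{k}$ and must cancel. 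The paper makes this cancellation explicit from the series expansions --- the leading singularities of $x\,J_0(ky)Y_1(kx)$ and $y\,J_0(kx)Y_1(ky)$ are both $-\tfrac{2}{\pi k}$, independent of $x$ and $y$, so they drop out of the antisymmetric combination, leaving the $kx^2\log(y/x)$ term. Your alternative --- writing $\partial_kG_{-\frac12}=2k\,\partial_zG_{-\frac12}$, noting $G_{-\frac12}$ is entire in $z$ (the $\log$ terms cancel in the antisymmetric combination), and applying a Cauchy estimate on a circle of radius $r=cx^{-2}$ where \eqref{estGl} gives $\abs{G_{-\frac12}}\le C\sqrt{xy}\,(1+\log(x/y))$ --- checks out: the circle stays in the regime $\abs{\sqrt{\zeta}}x\le\sqrt{1+c}$, the exponential in \eqref{estGl} is then $\OO(1)$, and $2\abs{k}\cdot r^{-1}\sqrt{xy}\,(1+\log(x/y))=2c^{-1}\abs{k}x^2\sqrt{xy}\,(1+\log(x/y))$ is exactly the right-hand side of \eqref{eq:partial-z-Gl} when $\abs{k}x\le1$. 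What your approach buys is robustness: it never needs the precise constants in the singular parts to cancel by hand, only analyticity in $z$ plus the already-proven bound on $G_{-\frac12}$; the price is that it is less explicit and does not directly yield the refined asymptotics $\tfrac{\pi}{2}\sqrt{xy}\,kx^2\log(y/x)(1+\OO(1))$ that the paper's computation produces as a by-product.
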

\begin{proof}
The first inequality follows from the identity (see \dlmf{10.6.3})
\[
\partial_k{\phi}_{-\frac12}(k^2,x) = -x\sqrt{\frac{\pi x}{2}} J_1(kx)
\]
along with the asymptotic behavior of the Bessel function $J_1$ (cf. \cite[Lemma 2.1]{ktt}).

To prove \eqref{eq:partial-z-Gl}, we first calculate 
\begin{align}\label{eq:partial-z-Gl-proof}
\begin{split}
\partial_k G_{-\frac12}(k^2,x,y) 
=&\; \frac{\pi}{2}\sqrt{xy} \Big[xJ_1(kx)Y_0(ky) 
     - y J_1(ky)Y_0(kx)
\\
 &\qquad\qquad - 
     x J_0(ky)Y_{1}(kx)+ yJ_0(kx)Y_{1}(ky)\Big]
\\
=&\; \frac{\I\pi}{4}\sqrt{xy} \left[xH^{(1)}_1(kx)H^{(2)}_0(ky)
     - y H^{(1)}_1(ky)H^{(2)}_0(kx)\right.
\\
 &\qquad\qquad \left. +x H^{(1)}_0(ky)H^{(2)}_{1}(kx) - y H^{(1)}_0(kx)H^{(2)}_{1}(ky) \right],
\end{split}
\end{align}
where we have used formulas \eqref{Gdef} and the identities for derivatives of Bessel and Hankel functions (cf. Appendix B). 

{\em Step (i):   $\abs{ky}\le\abs{kx}\le 1$.} 
Employing the series expansions \eqref{eq:Jnu01}--\eqref{eq:Ynu01} we get from the first equality in \eqref{eq:partial-z-Gl-proof}
\begin{align*}
\partial_k G_{-\frac12}(k^2,x,y)&=\frac{\pi}{2}\sqrt{xy} \left[x\frac{kx}{4}\frac{2\log(ky)}{\pi}-y\frac{ky}{4}\frac{2\log (kx)}{\pi}\right. \\
&-x\Big(\frac{1}{2\pi kx}+\frac{2\log(kx)}{\pi}\frac{kx}{4}\Big)+y\Big(\frac{1}{2\pi ky}+\frac{2\log(ky) }{\pi}\frac{ky}{4}\Big) \Big] (1+\OO(1)) \\
&=\frac{\pi}{2}\sqrt{xy}\big(kx^2+ky^2\big)\big(\log(ky)-\log(kx)\big) (1+\OO(1))\\
&=\frac{\pi}{2}\sqrt{xy}kx^2\log(y/x) (1+\OO(1)).
\end{align*}
This immediately implies the desired claim. 

{\em Step (ii):  $\abs{ky}\le 1\le\abs{kx}$.} Again we employ the asymptotics \eqref{eq:Jnu01}--\eqref{eq:asymp-Y-infty} from Appendix B to get:
\begin{align*}
\partial_k G_{-\frac12}(k^2,x,y)&=\frac{\pi\sqrt{xy}}{2} \left[  \sqrt{\frac{2x}{\pi k}}\cos\Big(kx-\frac{3 \pi}{ 4}\Big)\frac{2\log(ky)}{\pi}-yky \sqrt{\frac{2}{\pi kx}}\cos\Big(kx-\frac{\pi}{ 4}\Big) \right.\\
&\left.\ \ -\sqrt{\frac{2x}{\pi k}}\cos\Big(kx-\frac{3 \pi}{ 4}\Big)+y\sqrt{\frac{2}{\pi kx}}\cos\Big(kx-\frac \pi 4\Big) \frac{1}{2\pi ky}\right](1+\OO(1))\\
&=\frac{\pi\sqrt{xy}}{2} \left[\sqrt{\frac{2x}{\pi k}}\cos\Big(kx-\frac{3 \pi}{ 4}\Big)\Big(\frac{2}{\pi}\log(ky) - 1\Big)  \right.\\
&\qquad \qquad\left. + \sqrt{\frac{2}{\pi kx}}\cos\Big(kx-\frac \pi 4\Big)\Big( \frac{1}{2\pi k} - yky\Big)\right](1+\OO(1)).
\end{align*}
This gives the desired estimate, where we have to use $\frac{1}{|k|} \le x$ to estimate the second summand and the logarithmic expression appropriately (cf. step (ii) of \ref{lem:b.1}).

{\em Step (iii): $1\le\abs{ky}\le\abs{kx}$.} To deal with the remaining case we shall use the second equality in \eqref{eq:partial-z-Gl-proof} and the asymptotic 
expansions of Hankel functions \eqref{eq:asymp-H1-infty}--\eqref{eq:asymp-H2-infty}: 
\begin{align*}
\partial_k G_{-\frac12}(k^2,x,y) &=\frac{\I\pi\sqrt{xy}}{4} \left[x\frac{2}{\pi k\sqrt{xy}}\E^{\I k(x-y) -\I\pi/2} - y\frac{2}{\pi k\sqrt{xy}}\E^{\I k(y-x) -\I\pi/2} \right.\\
&\quad +\left. x\frac{2}{\pi k\sqrt{xy}}\E^{\I k(y-x) + \I\pi/2} - y\frac{2}{\pi k\sqrt{xy}}\E^{\I k(x-y) + \I\pi/2}\right](1+\OO(1))\\
&= \frac{x+y}{2\I k}\sin(k(x-y)) 
(1+\OO(1)).
\end{align*}
This again immediately implies \eqref{eq:partial-z-Gl}.
\end{proof}

\begin{lemma}
\label{lem:about-partial-z}
Assume \eqref{q:hyp}. Then  
$\partial_k{\phi}(k^2,x)$ is a solution to the integral equation
\begin{multline}
\label{eq:partial-z-phi}
\partial_k {\phi}(k^2,x)
= \partial_k{\phi}_{-\frac12}(k^2,x)\\
 	+ \int_0^x [\partial_k G_{-\frac12}(k^2,x,y)] {\phi}(k^2,y) 
 	+ G_{-\frac12}(k^2,x,y) \partial_k{\phi}(k^2,y)]q(y)dy 
\end{multline}
and satisfies the estimate
\begin{align}
\abs{\partial_k{\phi}(k^2,x) -\partial_k {\phi}_{-\frac12}(k^2,x)}
\le& C|k|x \left(\frac{x}{1+ \abs{k} x}\right)^{\frac32} \E^{\abs{\im\, k} x}  \label{eq:partial-z-diff-phi}\\
&\times \int_0^x \frac{y}{1+|k|y} \left(1+ \log\Big(\frac{x}{y}\Big)\right)|q(y)|dy.\nn
\end{align}
\end{lemma}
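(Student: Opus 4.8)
The plan is to follow the successive-approximation scheme already used for Lemma~\ref{lem:phi}, now applied to the $k$-derivatives, and to begin by deriving the integral equation \eqref{eq:partial-z-phi} by differentiating the fixed-point equation \eqref{eq:phi_int} with respect to $k$. The only delicate point is interchanging $\partial_k$ with $\int_0^x$. Since $\phi_{-\frac12}$ and $\theta_{-\frac12}$ are real entire in $z$, so is the Green's function $G_{-\frac12}(z,x,y)$, and $\phi(\cdot,x)$ is entire by Lemma~\ref{lem:phi}; hence for fixed $x$ the integrand $G_{-\frac12}(k^2,x,y)\phi(k^2,y)q(y)$ is analytic in $k$. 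By Lemmas~\ref{lem:b.1} and \ref{lem:phi} together with \eqref{q:hyp} it is dominated, locally uniformly in $k$, by a fixed $L^1(0,x)$ function of $y$ (near $y=0$ the integrand is $\OO(y(1+|\log y|)|q(y)|)$), and by Cauchy's estimate the same holds for its $k$-derivative. Differentiation under the integral sign is therefore legitimate, and the product rule applied to $G_{-\frac12}(k^2,x,y)\phi(k^2,y)$ yields \eqref{eq:partial-z-phi}.

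For the estimate I would expand $\phi=\sum_{n\ge0}\phi_n$ as in Lemma~\ref{lem:phi} and differentiate termwise, so that $\partial_k\phi=\sum_{n\ge0}\partial_k\phi_n$ with $\partial_k\phi_0=\partial_k\phi_{-\frac12}$ and
\[
\partial_k\phi_n(k^2,x)=\int_0^x(\partial_kG_{-\frac12})\,\phi_{n-1}\,q\,dy+\int_0^x G_{-\frac12}\,(\partial_k\phi_{n-1})\,q\,dy,
\]
the arguments $(k^2,x,y)$, $(k^2,y)$ being suppressed. Writing $F(x):=\int_0^x\frac{y}{1+|k|y}(1+\log(x/y))|q(y)|dy$ for the weight already appearing in \eqref{estphi} and \eqref{eq:phi_n}, the target is the inductive bound
\[
|\partial_k\phi_n(k^2,x)|\le\frac{C^{n+1}}{n!}(n+1)\,|k|x\Big(\tfrac{x}{1+|k|x}\Big)^{3/2}\E^{\abss{\im k}x}F(x)^n ,
\]
whose case $n=0$ is precisely \eqref{eq:partial-z-phil}.

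For the induction step I would estimate the two integrals separately. In the first I insert \eqref{eq:partial-z-Gl} and the bound \eqref{eq:phi_n} for $\phi_{n-1}$; the exponentials combine to $\E^{\abss{\im k}x}$, the two factors $(\tfrac{y}{1+|k|y})^{1/2}$ merge into $\tfrac{y}{1+|k|y}$, and there remains $\tfrac{C^{n+1}}{(n-1)!}|k|x(\tfrac{x}{1+|k|x})^{3/2}\E^{\abss{\im k}x}\int_0^x\tfrac{y}{1+|k|y}(1+\log(x/y))F(y)^{n-1}|q(y)|dy$. Exactly as in Lemma~\ref{lem:phi}, since $y\le x$ one has $F(y)\le M(y):=\int_0^y\tfrac{t}{1+|k|t}(1+\log(x/t))|q(t)|dt$ with $M(x)=F(x)$, so that $\int_0^xF(y)^{n-1}dM(y)\le\frac1nF(x)^n$; this produces the factorial and bounds the first integral by $\tfrac{C^{n+1}}{n!}|k|x(\tfrac{x}{1+|k|x})^{3/2}\E^{\abss{\im k}x}F(x)^n$. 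In the second integral I insert \eqref{estGl} and the inductive hypothesis for $\partial_k\phi_{n-1}$, which carries an extra factor $n=((n-1)+1)$ and the heavier scale $|k|y(\tfrac{y}{1+|k|y})^{3/2}$; the needed comparison $(\tfrac{x}{1+|k|x})^{1/2}|k|y(\tfrac{y}{1+|k|y})^{2}\le|k|x(\tfrac{x}{1+|k|x})^{3/2}\tfrac{y}{1+|k|y}$ reduces to $\tfrac{|k|y^2}{1+|k|y}\le\tfrac{|k|x^2}{1+|k|x}$, which holds because $s\mapsto\tfrac{|k|s^2}{1+|k|s}$ is increasing. The factorial $\tfrac1n$ then cancels the extra $n$, bounding the second integral by $\tfrac{C^{n+1}}{(n-1)!}|k|x(\tfrac{x}{1+|k|x})^{3/2}\E^{\abss{\im k}x}F(x)^n$. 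Adding the two contributions gives the coefficient $\tfrac1{n!}+\tfrac1{(n-1)!}=\tfrac{n+1}{n!}$, which closes the induction.

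Finally, summing over $n\ge1$ gives
\[
|\partial_k\phi-\partial_k\phi_{-\frac12}|\le|k|x\Big(\tfrac{x}{1+|k|x}\Big)^{3/2}\E^{\abss{\im k}x}\sum_{n\ge1}\frac{C^{n+1}(n+1)}{n!}F(x)^n,
\]
and since $\sum_{n\ge1}\frac{(n+1)t^n}{n!}=(1+t)\E^t-1$, this series converges and is bounded by a constant multiple of $F(x)$ exactly as its analogue was reduced in Lemma~\ref{lem:phi}, yielding \eqref{eq:partial-z-diff-phi}. I expect the main obstacle to be the coupled recursion: unlike in Lemma~\ref{lem:phi}, the bound on $\partial_k\phi_n$ feeds back through both $\phi_{n-1}$ and $\partial_k\phi_{n-1}$, so the ansatz must carry the extra polynomial weight $(n+1)$, and one has to verify the monotonicity $\tfrac{|k|y^2}{1+|k|y}\le\tfrac{|k|x^2}{1+|k|x}$ that permits extracting the heavier prefactor $(\tfrac{x}{1+|k|x})^{3/2}$ uniformly in $y\le x$.
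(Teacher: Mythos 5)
Your proposal is correct and follows essentially the same route as the paper: differentiate the successive-approximation series termwise, set up the coupled recursion for $\beta_n=\partial_k\phi_n$, and close an inductive bound using \eqref{eq:partial-z-Gl}, \eqref{estGl} and \eqref{eq:phi_n}. You are in fact slightly more explicit than the paper (which disposes of the second summand with ``using induction, one can show\dots''), in particular about the extra polynomial factor $(n+1)$ in the inductive ansatz and the monotonicity of $s\mapsto |k|s^2/(1+|k|s)$ needed to pull out the heavier prefactor; both details are harmless since the $(n+1)$ can be absorbed into $C^{n+1}$, and the summed series is handled exactly as in Lemma~\ref{lem:phi}.
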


\begin{proof}
Let us show that $\partial_k \phi(k^2,x)$ given by
\begin{gather}
\partial_k \phi = \sum_{n=0}^\infty \beta_n,\quad  \beta_0(k,x)
	=\partial_k{\phi}_{-\frac12}(k^2,x),\label{eq:betaser}
	\\
\begin{multlined}	
\beta_{n}(k,x)
	=\int_0^x \partial_k G_{-\frac12}(k^2,x,y)\,  {\phi}_{n-1}(k^2,y) q(y)dy
		\qquad\qquad\qquad\qquad
		\\
		+ \int_0^x G_{-\frac12}(k^2,x,y) \beta_{n-1}(k,y) q(y)dy,
		\quad n\in \N,\label{eq:partial-z-first}
\end{multlined}		 
\end{gather}
satisfies \eqref{eq:partial-z-phi}. Here $\phi_n$ is defined in Lemma \ref{lem:phi}. Using \eqref{eq:phi_n} and 
\eqref{eq:partial-z-phil}, we can bound the first summand in 
\eqref{eq:partial-z-first} as follows
\begin{align*}
\abs{\text{1st term}}
&\le
\frac{C^{n+1}}{(n-1)!} |k|x \left(\frac{x}{1+|k|x}\right)^{\frac{3}{2}}\! \E^{\abss{\im k} x} \\
&\int_0^x \left(1+ \log\Big(\frac{x}{y}\Big)\right) \frac{y\abs{q(y)}}{1+|k|y}\left(\int_0^y 
\left(1+ \log\Big(\frac{y}{t}\Big) \right)\frac{t\abs{q(t)}}{1+|k|t}dt\right)^{n-1}\!dy\\
&\le \frac{C^{n+1}}{n!}|k| x \left(\frac{x}{1+|k|x}\right)^{\frac32} \E^{\abss{\im k} x}
	\left(\int_0^x  \left(1+ \log\Big(\frac{x}{y}\Big)\right) \frac{y |q(y)|}{1+|k|y}dy\right)^{n}.
\end{align*}
Next, using induction, one can show that the second summand admits a 
similar bound and hence we finally get
\[
\abs{\beta_n(k,x)}
\le \frac{C^{n+1}}{n!} |k|x \left(\frac{x}{1+|k|x}\right)^{\frac32} \E^{\abss{\im k} x}
	\left(\int_0^x  \left(1+ \log\Big(\frac{x}{y}\Big)\right) \frac{y |q(y)|}{1+|k|y}dy\right)^{n}.
\] 
This immediately implies the convergence of 
\eqref{eq:betaser} and, moreover, the estimate
\[
\abss{\partial_k{\phi}(k^2,x) -\partial_k{\phi}_{-\frac12}(k^2,x)}
\le \sum_{n=1}^\infty\abs{\beta_n(k,x)},
\]
from which \eqref{eq:partial-z-diff-phi} follows under the assumption
\eqref{q:hyp}.
\end{proof}

Furthermore, by \cite{fad, cc, volk} (see also \cite{HKT2}), the regular solution $\phi$ admits a representation by means of transformation operators preserving the behavior of solutions at $x=0$
(see also \cite[Chap. III]{cs} for further details and historical remarks).

\begin{lemma}\label{lem:toGL}
Suppose  $q\in L^1_{\loc}([0,\infty))$. Then
\be\label{eq:to_GL}
\phi(z,x) = \phi_{-\frac12}(z,x) + \int_0^x B(x,y) \phi_{-\frac12}(z,y) dy =(I+B)\phi_{-\frac12}(z,x),
\ee
where the so-called Gelfand--Levitan kernel $B\colon\R_+^2\to \R$ satisfies the estimate
\be\label{eq:GLest}
|B(x,y)| \le \frac{1}{2} {\sigma}_0\left(\frac{x+y}{2}\right)\E^{{\sigma}_1(x)},\quad {\sigma}_j(x)=\int_0^x s^j |q(s)|ds,
\ee
for all $0<y<x$ and $j\in \{0,1\}$.
\end{lemma}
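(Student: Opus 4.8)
The plan is to read \eqref{eq:to_GL} as an ansatz, to characterise the kernel $B$ by a Goursat problem, and to extract the bound \eqref{eq:GLest} from the associated Volterra equation by iteration. That a $z$-independent kernel $B$ with $\phi=(I+B)\phi_{-\frac12}$ exists is the classical transformation-operator result of \cite{fad,cc,volk} (see also \cite[Chap.~III]{cs}); what needs work is the quantitative estimate, so I would re-derive the defining relations for $B$ and run the iteration by hand.

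To obtain the equations for $B$, I would insert $\phi=\phi_{-\frac12}+\int_0^x B(x,y)\phi_{-\frac12}(z,y)\,dy$ into $\tau\phi=z\phi$, use that $\phi_{-\frac12}(z,\cdot)$ solves the unperturbed equation in both the free variable $x$ and the integration variable $y$, and integrate by parts twice in $y$. Because the potential is borderline, the two pieces $\int_0^x B\,\partial_y^2\phi_{-\frac12}$ and $\int_0^x\frac{1}{4y^2}B\,\phi_{-\frac12}$ diverge separately at $y=0$; one must therefore integrate by parts on $[\varepsilon,x]$ and pass to the limit $\varepsilon\to0$ only after recombining them, at which stage the leading $\varepsilon^{-1/2}$ contribution from the boundary cancels the one coming from the $\frac{1}{4y^2}$-integral, so that no endpoint term at $y=0$ survives. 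Here the behaviour $\phi_{-\frac12}(z,x)\sim\sqrt{\pi x/2}$ recorded in \eqref{eq:fs01} and the value $l=-\frac12$ enter decisively. Since $z$ is arbitrary, comparing coefficients then forces the Goursat problem
\[
  B_{xx}(x,y)-B_{yy}(x,y)=\Big(q(x)-\frac{1}{4x^2}+\frac{1}{4y^2}\Big)B(x,y),\qquad 0<y<x,
\]
together with the diagonal relation $\frac{d}{dx}B(x,x)=\frac12 q(x)$, that is, $B(x,x)=\frac12\int_0^x q(s)\,ds$.

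I would then pass to the characteristic coordinates $\xi=\frac{x+y}{2}$, $\eta=\frac{x-y}{2}$, in which $\partial_x^2-\partial_y^2$ becomes $\partial_\xi\partial_\eta$. Integrating over the characteristic triangle attached to $(x,y)$ and feeding in the diagonal data along $\eta=0$ turns the Goursat problem into a Volterra integral equation
\[
  B(x,y)=\tfrac12\int_0^{(x+y)/2}q(s)\,ds+\iint_{\triangle(x,y)}\mathcal{K}(x,y,s,t)\,q(s)\,B(s,t)\,ds\,dt,
\]
whose inhomogeneous term is precisely the leading factor $\frac12\sigma_0(\frac{x+y}{2})$ in \eqref{eq:GLest}, and whose kernel $\mathcal{K}$ carries the Bessel structure of the unperturbed problem. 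Solving it by successive approximation, $B=\sum_{n\ge0}B_n$ with $B_0(x,y)=\frac12\int_0^{(x+y)/2}q$ and $B_{n+1}$ the image of $B_n$, and using that each application of $\mathcal{K}$ costs a factor controlled by $\int s\,|q(s)|\,ds$, an induction on $n$ should give $|B_n(x,y)|\le\frac12\sigma_0(\frac{x+y}{2})\,\sigma_1(x)^n/n!$. Summing this absolutely convergent series then yields $|B(x,y)|\le\frac12\sigma_0(\frac{x+y}{2})\E^{\sigma_1(x)}$, as claimed.

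The main obstacle is the singular term $-\frac{1}{4x^2}+\frac{1}{4y^2}$ intrinsic to the critical value $l=-\frac12$. It must be handled twice: once in the derivation of the Goursat problem, where the divergent boundary contributions at $y=0$ cancel only because the potential is exactly $-\frac{1}{4y^2}$, and once in the integral equation, where one has to verify that this term contributes nothing worse than the benign weight $s$ appearing in $\sigma_1(x)=\int_0^x s\,|q(s)|\,ds$ and does not destroy the factorial gain in the induction. For $l>-\frac12$ the regular solution behaves like a genuine power $x^{l+1}$ and both points are routine; it is the borderline power $\sqrt{x}$, together with the logarithm in $\theta_{-\frac12}$, that makes the present case delicate and explains why the hypotheses are stated in terms of $\sigma_0$ and $\sigma_1$.
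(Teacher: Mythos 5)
The paper does not prove this lemma at all: it is imported wholesale from the transformation-operator literature (\cite{fad,cc,volk}, the companion paper \cite{HKT2}, and \cite[Chap.~III]{cs}), so there is no ``paper proof'' to match. Your reconstruction --- ansatz, Goursat problem with diagonal data $\frac{d}{dx}B(x,x)=\frac12 q(x)$, characteristic coordinates, Volterra iteration --- is the standard route taken in those references, and the Goursat problem you write down is the correct one (the coefficient of $\phi_{-\frac12}'(x)$ cancels, the diagonal relation and the hyperbolic equation come out as you state).

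There is, however, a genuine gap at the decisive step. The kernel $\mathcal{K}$ of your Volterra equation is the Riemann function of the singular hyperbolic operator $\partial_x^2-\partial_y^2-\frac{1}{4x^2}+\frac{1}{4y^2}$, and you neither identify it nor bound it; the assertion that ``each application of $\mathcal{K}$ costs a factor controlled by $\int s\,|q(s)|\,ds$'' is precisely where the critical value $l=-\frac12$ can bite. That this is not automatic is visible inside the present paper: for the Marchenko kernel the analogous Riemann function produces the hypergeometric constant $c_0=\sup_s(1-s)^{1/2}\hyp21{1/2,1/2}{1}{s}$ in \eqref{eq:MAest}, and its finiteness has to be checked separately. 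To obtain \eqref{eq:GLest} with the bare constant $\frac12$ and the clean exponential $\E^{\sigma_1(x)}$ you must show that the Riemann function relevant to the origin-preserving operator is bounded by $1$ on the characteristic triangle (or track its bound through the induction). Two smaller points: the Goursat problem needs a second piece of data, namely the behaviour of $B$ as $y\to0$ (you invoke a cancellation at $y=0$ but never state the resulting condition, without which the characteristic-triangle integration is not determined); and the inhomogeneous term of the Volterra equation is an integral of $q$ over a subinterval of $\big(0,\frac{x+y}{2}\big)$ weighted by the Riemann function, so it is \emph{dominated by} $\frac12\sigma_0\big(\frac{x+y}{2}\big)$ rather than equal to it --- harmless for the estimate, but it should be stated correctly. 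Finally, having solved the Volterra equation one must verify a posteriori that the resulting $B$ is regular enough to justify the integrations by parts and hence that \eqref{eq:to_GL} actually holds; as written the argument derives the Goursat problem from \eqref{eq:to_GL} and then solves it, which does not by itself close the loop.
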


In particular, this lemma immediately implies the following useful result.

\begin{corollary}\label{cor:Best}
Suppose $q\in L^1((0,1))$. Then $B$ is a bounded operator on $L^\infty((0,1))$.
\end{corollary}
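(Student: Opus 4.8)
The plan is to recognize $B$ as the Volterra-type integral operator
\[
(Bf)(x)=\int_0^x B(x,y)\,f(y)\,dy
\]
determined by the Gelfand--Levitan kernel, and to estimate its $L^\infty$-operator norm directly from the pointwise bound \eqref{eq:GLest}. For an integral operator of this form acting on $L^\infty((0,1))$, the operator norm is controlled by $\sup_{0<x<1}\int_0^x\abs{B(x,y)}\,dy$, since pointwise
\[
\abs{(Bf)(x)}\le\int_0^x\abs{B(x,y)}\,\abs{f(y)}\,dy\le\norm{f}_{L^\infty}\int_0^x\abs{B(x,y)}\,dy.
\]
Thus the whole argument reduces to bounding this last integral uniformly in $x$.

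First I would insert the kernel estimate \eqref{eq:GLest} and factor out the $x$-dependent exponential:
\[
\int_0^x\abs{B(x,y)}\,dy\le\frac12\,\E^{\sigma_1(x)}\int_0^x\sigma_0\Big(\frac{x+y}{2}\Big)\,dy.
\]
The key observation is that $\sigma_0$ is nondecreasing (being the integral of $\abs{q}\ge0$), so since $\frac{x+y}{2}\le x$ for $0<y\le x$ we have $\sigma_0\big(\frac{x+y}{2}\big)\le\sigma_0(x)$, whence $\int_0^x\sigma_0\big(\frac{x+y}{2}\big)\,dy\le x\,\sigma_0(x)$. Moreover, for $x\le1$ both $\sigma_0(x)\le\int_0^1\abs{q(s)}\,ds=\norm{q}_{L^1((0,1))}$ and $\sigma_1(x)=\int_0^x s\,\abs{q(s)}\,ds\le\norm{q}_{L^1((0,1))}$, the latter because $s\le1$ on $(0,1)$.

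Combining these yields the uniform bound
\[
\int_0^x\abs{B(x,y)}\,dy\le\frac12\,\norm{q}_{L^1((0,1))}\,\E^{\norm{q}_{L^1((0,1))}},
\]
independent of $x\in(0,1)$, and hence $\norm{Bf}_{L^\infty}\le\frac12\,\norm{q}_{L^1((0,1))}\,\E^{\norm{q}_{L^1((0,1))}}\,\norm{f}_{L^\infty}$, which proves boundedness. I do not expect any serious obstacle here: the estimate \eqref{eq:GLest} from Lemma \ref{lem:toGL} does all the heavy lifting, and the only points to verify are the monotonicity of $\sigma_0$ and the elementary finiteness of $\sigma_0,\sigma_1$ on the bounded interval $(0,1)$ under the hypothesis $q\in L^1((0,1))$.
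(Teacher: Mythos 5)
Your argument is correct and is essentially identical to the paper's own proof: both reduce the claim to the uniform bound $\sup_{0<x<1}\int_0^x\abs{B(x,y)}\,dy<\infty$ and obtain it from the Gelfand--Levitan estimate \eqref{eq:GLest} together with the monotonicity of $\sigma_0$ and the finiteness of $\sigma_0(1)$, $\sigma_1(1)$ for $q\in L^1((0,1))$. No gaps; the explicit constant you obtain matches the paper's up to the trivial extra observation $\sigma_1(1)\le\sigma_0(1)$.
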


\begin{proof}
If $f\in L^\infty((0,1))$, then using the estimate \eqref{eq:GLest} we get
\begin{multline*}
|(B f)(x)| 
= \Big|\int_0^x B(x,y) f(y) dy\Big| \le \|f\|_\infty \int_0^x |B(x,y)|dy 
\\
\le \frac{1}{2}\|f\|_\infty \E^{{\sigma}_1(1)} \int_0^x {\sigma}_0\Big(\frac{x+y}{2}\Big)dy 
\le \frac{1}{2}  \|f\|_\infty \E^{{\sigma}_1(1)} \sigma_0(1),
\end{multline*}
which proves the claim.
\end{proof}

\begin{remark}\label{rem:Best}
Note that $B$ is a bounded operator on $L^2((0,a))$ for all $a>0$. However, the estimate \eqref{eq:GLest} allows to show that its
norm behaves like $\OO(a)$ as $a\to \infty$ and hence $B$ might not be bounded on $L^2(\R_+)$. 
\end{remark}

\subsection{The Jost solution and the Jost function}
\label{sec:jsol}

In this subsection, we assume that the potential $q$ belongs to 
{\em the Marchenko class}, i.e., in addition to \eqref{q:hyp}, $q$ 
also satisfies 
\be\label{eq:q_mar}
\int_1^\infty x\log(1+x) |q(x)|dx<\infty.
\ee
Recall that under these assumptions on $q$ the spectrum of $H$ is 
purely absolutely continuous on $(0,\infty)$ with an at most finite 
number of eigenvalues $\lam_n \in (-\infty,0)$. 
A solution $f(k,\cdot)$ to $\tau y = k^2y$ with $k\neq 0$ satisfying the following asymptotic normalization 
\be\label{eq:JostSol}
f(k,x) = \E^{\I k x}(1 + o(1)),\qquad f'(k,x) = \I k \E^{\I k x}(1 + o(1))
\ee
as $x\to \infty$,  is called {\em the Jost solution}. 
In the case $q\equiv0$, we have (cf. \eqref{eq:asymp-H1-infty})
\be\label{eq:jost0}
f_{-\frac12}(k,x)
	= \E^{\I\frac{\pi }{4}}\sqrt{\frac{\pi xk}{2}}
	H_0^{(1)}(kx),
\ee
which is analytic in $\C_+$ and continuous in $\overline{\C_+}\setminus\{0\}$.  Here $H^{(1)}_{\nu}$ is the Hankel function
of the first kind (see Appendix \ref{sec:Bessel}). Using the estimates for Hankel functions we obtain
\be
\begin{split}
\label{est:Jostsol}
\abs{f_{-\frac12}(k,x)}
	&\le C\left(\frac{\abs{k}x}{1+ \abs{k} x}\right)^{\frac12} \E^{-\abs{\im\, k} x}\left( 1-\log\left(\frac{|k|x}{1+|k|x}\right) \right)\le C 
		\E^{-\abs{\im\, k} x}
	\end{split}
\ee
 for all $x>0$. Notice that for the second inequality in \eqref{est:Jostsol} we have to use the fact that the function $x\mapsto \sqrt{\frac{x}{x+1}}\log\left( \frac{x}{x+1} \right)$ is bounded on $\R_+$.

\begin{lemma}\label{lem:b.4}
Assume \eqref{eq:q_mar}. Then the Jost solution satisfies the integral equation
\be \label{eq:jost_inteq}
f(k,x) 
	= f_{-\frac12}(k,x) - \int_x^\infty G_{-\frac12}(k^2,x,y) f(k,y) q(y) dy.
\ee
For all $x>0$, $f(\cdot,x)$ is analytic in the upper half plane and can 
be continuously extended to the real axis away from $k=0$ and
\begin{align}
\label{estpsi}
\abss{f(k,x) - f_{-\frac12}(k,x)}
	&\leq C \left(\frac{ x}{1+ \abs{k} x}\right)^{\frac12} 
	\E^{-\abs{\im\, k}\, x} \\
	&\qquad \times \int_x^\infty 
	\left( \frac{y }{1 +\abs{k} y}\right)^{\frac12 }\left(1+ \log\Big(\frac{y}{x}\Big)\right) \abss{q(y)} dy. \nonumber
\end{align}
\end{lemma}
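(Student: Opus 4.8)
The plan is to characterize the Jost solution as the unique solution of the Volterra-type integral equation \eqref{eq:jost_inteq} and to construct it by iteration, mirroring the proof of Lemma~\ref{lem:phi} but now integrating from $x$ to $\infty$ instead of from $0$ to $x$. First I would check that \eqref{eq:jost_inteq} is equivalent to $\tau f = k^2 f$ together with the normalization \eqref{eq:JostSol}. Applying $\tau - k^2$ to the right-hand side of \eqref{eq:jost_inteq} and using that $G_{-\frac12}(k^2,x,y)$ is the Green kernel built from the two free solutions $\phi_{-\frac12},\theta_{-\frac12}$ (so $G_{-\frac12}$ vanishes on the diagonal while $\partial_x G_{-\frac12}$ on the diagonal equals the constant Wronskian) one recovers the inhomogeneous equation $(\tau-k^2)f = qf$; taking $\infty$ as the endpoint guarantees that the correction term is $o(1)$ relative to $f_{-\frac12}$ as $x\to\infty$, so that $f$ inherits the asymptotics \eqref{eq:JostSol} of $f_{-\frac12}$.

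The construction then proceeds via the series $f = \sum_{n\ge0} f_n$ with $f_0 = f_{-\frac12}$ and $f_n(k,x) = -\int_x^\infty G_{-\frac12}(k^2,x,y) f_{n-1}(k,y) q(y)\,dy$. Two ingredients drive the estimates: the bound $\abss{f_{-\frac12}(k,y)}\le C\,\E^{-\abss{\im k} y}$ from \eqref{est:Jostsol}, and the Green's function estimate \eqref{estGl}. Since here $y\ge x$, I would invoke \eqref{estGl} after interchanging the roles of $x$ and $y$, which is legitimate because of the antisymmetry $G_{-\frac12}(k^2,x,y)=-G_{-\frac12}(k^2,y,x)$ evident from its definition. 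Combining the two bounds and noting that the exponentials telescope, $\E^{\abss{\im k}(y-x)}\E^{-\abss{\im k} y}=\E^{-\abss{\im k} x}$, already shows that the first iterate $f_1$ is dominated by precisely the right-hand side of \eqref{estpsi}.

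For the higher iterates I would establish, by induction, a bound of the same shape as the first one but carrying an additional factor $\tfrac{1}{(n-1)!}\,\mathcal M(k,x)^{n-1}$, where $\mathcal M(k,x)=\int_x^\infty \tfrac{y}{1+\abs{k}y}\big(1+\log(y/x)\big)\abss{q(y)}\,dy$; the extra power of $\tfrac{y}{1+\abs{k}y}$ in $\mathcal M$ (one factor coming from the kernel and one from the leading behavior of $f_{n-1}$) is exactly why the controlling integral differs from the first-order one. The telescoping that produces the factorial $\tfrac{1}{(n-1)!}$ rests on the monotonicity $1+\log(t/y)\le 1+\log(t/x)$ for $x\le y\le t$, just as in the passage leading to \eqref{eq:phi_n}. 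Summing the series then yields \eqref{estpsi}, while absolute and locally uniform convergence—valid for each fixed $x>0$ precisely because $\mathcal M(k,x)<\infty$ under the Marchenko condition \eqref{eq:q_mar}—shows that $f(\cdot,x)$, as a locally uniform limit of functions analytic in the upper half-plane and continuous on $\R\setminus\{0\}$, is itself analytic there and extends continuously to the real axis away from $k=0$.

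The main obstacle is the bookkeeping of the logarithmic weight $1+\log(y/x)$, which is forced on us by the critical term $-\tfrac{1}{4x^2}$ through the logarithmic singularity of $Y_0$ (and hence of $\theta_{-\frac12}$ and $G_{-\frac12}$) at the origin. Unlike the standard Schr\"odinger case, these factors do not collapse into a clean exponential, so one must exploit their monotonicity inside the ordered multiple integrals to recover the factorial decay and to keep \eqref{estpsi} as the dominant contribution. A secondary point demanding care is the convergence of the $y$-integral at $+\infty$, which is where the full strength of \eqref{eq:q_mar} enters.
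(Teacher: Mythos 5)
Your proposal follows essentially the same route as the paper: the Jost solution is constructed by successive iteration of the Volterra equation \eqref{eq:jost_inteq}, each iterate being controlled by combining the (antisymmetrized) Green's function bound \eqref{estGl} with \eqref{est:Jostsol}, the telescoping of the exponentials, and the monotonicity $1+\log(t/y)\le 1+\log(t/x)$ producing the $1/n!$ decay. The only difference is in the bookkeeping of the iterates: the paper states the bound for $f_n$ with the half-power controlling integral $\bigl(\int_x^\infty(\tfrac{y}{1+|k|y})^{1/2}(1+\log(y/x))|q(y)|dy\bigr)^n$ for all $n$, whereas you switch to the full-power integral $\mathcal M$ from the second iterate on (one half-power from the kernel, one from the prefactor of the inductive bound), which is indeed the version that closes the induction cleanly.
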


\begin{proof}
The proof is based on the successive iteration procedure. Set
\[
f = \sum_{n=0}^\infty f_{n},\quad 
f_0 = f_{-\frac12},\quad 
f_n(k,x) = -\int_x^\infty G_{-\frac12}(k^2,x,y)f_{n-1}(k,y)q(y)dy
\]
for all $n\in\N$. The series is absolutely convergent since 
\begin{align*}
\abs{{f}_n(k,x)}
\le \frac{C^{n+1}}{n!}&
	\left(\frac{x}{1+|k|x}\right)^{\frac12}
	\E^{-\abss{\im k} x}\\
	& \times \left(\int_x^\infty \left( \frac{y }{1 +\abs{k} y}\right)^{\frac12 } \left(1+ \log\Big(\frac{y}{x}\Big)\right)\abss{q(y)}dy\right)^n
\end{align*}
holds for all $n\in\N$. 
The latter also proves \eqref{estpsi}.
\end{proof}

Furthermore, by \cite{fad, cc, soh, soh2} (see also \cite{HKT2}), the Jost solution $f$ admits a representation by means of transformation operators preserving the behavior of solutions at infinity. 

\begin{lemma}[\cite{soh,soh2}]\label{lem:to}
Assume \eqref{eq:q_mar}  and let $k\neq 0$.
Then 
\be\label{eq:to_Mar}
f(k,x) = f_{-\frac12}(k,x) + \int_x^\infty K(x,y) f_{-\frac12}(k,y) dy = (I+K)f_{-\frac12}(k,x),
\ee
where the so-called Marchenko kernel $K\colon \R^2\to \R$ satisfies the estimate
\be\label{eq:MAest}
|K(x,y)| \le \frac{c_{0}}{2} \ti\sigma_{0}\left(\frac{x+y}{2}\right)\E^{c_{0}\ti{\sigma}_1(x)-\ti{\sigma}_1(\frac{x+y}{2})},\quad \ti{\sigma}_j(x)=\int_x^\infty s^j |q(s)|ds,
\ee
for all $x<y<\infty$. Here $c_{0}$ is a positive constant given by
\[
c_{0} := \sup_{s\in(0,1)} (1-s)^{1/2}\hyp21{1/2,1/2}{1}{s} = \sup_{s\in(0,1)} (1-s)^{1/2}\sum_{n=0}^\infty \frac{((1/2)_n)^2}{(n!)^2}s^n.
\]
\end{lemma}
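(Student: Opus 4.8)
The plan is to establish the transformation operator representation
\eqref{eq:to_Mar} together with the kernel estimate \eqref{eq:MAest}
by the classical Marchenko method, adapted to the critical angular
momentum $l=-\frac12$. First I would insert the ansatz
$f(k,x)=f_{-\frac12}(k,x)+\int_x^\infty K(x,y)f_{-\frac12}(k,y)\,dy$
into the differential equation $\tau y=k^2y$ and compute. Using that
both $f$ and $f_{-\frac12}$ solve the same equation with potentials
differing by $q$, and integrating by parts twice in the $y$-variable,
one finds that the unknown kernel $K(x,y)$ must satisfy a hyperbolic
Goursat-type problem: a wave-type equation
$(\partial_x^2-\partial_y^2+\frac{1}{4x^2}-\frac{1}{4y^2})K=q(x)K$
(the term $-\frac{1}{4x^2}$ survives from the Bessel operator) with the
boundary condition along the diagonal
$\frac{d}{dx}K(x,x)=\frac12 q(x)$ and a decay condition as
$y\to\infty$. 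The survival of the singular $-\frac{1}{4x^2}$ terms is
precisely what distinguishes this from the regular case $l>-\frac12$ and
forces one to track the Bessel structure carefully rather than reduce to
the free wave equation.

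Next I would convert this Goursat problem into an equivalent integral
equation for $K$ by integrating along characteristics. The natural
characteristic coordinates are $\xi=\frac{x+y}{2}$ and
$\eta=\frac{y-x}{2}$, which explains the appearance of $\frac{x+y}{2}$
in the estimate \eqref{eq:MAest}. After changing variables, $K$ solves a
Volterra-type integral equation whose kernel involves $q$ evaluated at
arguments $\ge\frac{x+y}{2}$, which accounts for the tail integrals
$\ti\sigma_j$. I would then solve this integral equation by the method
of successive approximations, setting $K=\sum_n K_n$ and estimating each
iterate.

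The bound \eqref{eq:MAest} itself is where the constant $c_0$ and the
hypergeometric function enter, and this is the step I expect to be the
main obstacle. In the regular case the free Green's kernel produces a
clean factor controlled by $1$; here, because of the $-\frac{1}{4x^2}$
term, the characteristic integration of the homogeneous part of the
equation produces, at each iteration, a factor governed by an integral
of the form $\int_0^1(1-s)^{-1/2}(\cdots)\,ds$ whose accumulated effect
is measured by the Gauss hypergeometric series
$\hyp21{1/2,1/2}{1}{s}$. Taking the supremum over $s\in(0,1)$ of
$(1-s)^{1/2}\hyp21{1/2,1/2}{1}{s}$ yields the sharp constant $c_0$, and
one must check that this supremum is finite, which follows from the
logarithmic (hence integrable after the $(1-s)^{1/2}$ weight) behavior
of the hypergeometric function as $s\to1^-$. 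With $c_0$ in hand, an
induction shows
$|K_n(x,y)|\le\frac{c_0}{2}\ti\sigma_0(\frac{x+y}{2})
\frac{(c_0\ti\sigma_1(x)-\ti\sigma_1(\frac{x+y}{2}))^{n}}{n!}
\E^{-\ti\sigma_1(\frac{x+y}{2})}$ or an analogous form, and summing the
series gives exactly \eqref{eq:MAest}, while the uniform convergence
justifies that the resulting $K$ solves the Goursat problem and hence
that \eqref{eq:to_Mar} holds. Since the detailed derivation is the
content of \cite{soh,soh2}, I would present the argument in outline and
cite those references for the full computation, emphasizing only the
role of the singular potential term and the origin of $c_0$.
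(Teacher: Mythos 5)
The paper does not prove this lemma at all: it is imported verbatim from Sohin's work \cite{soh,soh2} (with a pointer to \cite{HKT2}), and the only in-paper remark is that $c_0<\infty$ because of the logarithmic behaviour of $\hyp21{1/2,1/2}{1}{s}$ as $s\to1^-$ (DLMF 15.4.21). Your outline — Goursat problem for $K$, characteristic coordinates $\xi=\frac{x+y}{2}$, $\eta=\frac{y-x}{2}$, Riemann function of the singular hyperbolic operator producing the ${}_2F_1$ factor, successive approximations — is an accurate reconstruction of the route taken in those references, and since you explicitly defer to them for the details, your treatment is consistent with the paper's. Two small slips that would matter if you carried the argument out: the diagonal condition should read $\frac{d}{dx}K(x,x)=-\frac12 q(x)$ (equivalently $K(x,x)=\frac12\tilde{\sigma}_0(x)$), and the iterate bound you propose, with the extra factor $\E^{-\tilde{\sigma}_1(\frac{x+y}{2})}$ inside each $K_n$, does not sum to \eqref{eq:MAest}; the induction should place that factor once, outside the $n!$-series.
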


Notice that $c_0$ is finite in view of \dlmf{15.4.21}. Moreover, this lemma immediately implies the following useful result.

\begin{corollary}\label{cor:Kest}
If \eqref{eq:q_mar} holds, then $K$ is a bounded operator on $L^\infty((1,\infty))$.
\end{corollary}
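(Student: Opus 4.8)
The plan is to show that the operator $I+K$ restricted to $L^\infty((1,\infty))$ is bounded by estimating the integral operator $K$ with kernel $K(x,y)$ directly, exactly in the spirit of the proof of Corollary~\ref{cor:Best}. For $f\in L^\infty((1,\infty))$ and $x>1$, I would write
\begin{equation*}
|(Kf)(x)| = \Big|\int_x^\infty K(x,y) f(y)\,dy\Big| \le \|f\|_\infty \int_x^\infty |K(x,y)|\,dy,
\end{equation*}
and then insert the Marchenko estimate \eqref{eq:MAest}. The task thus reduces to bounding, uniformly in $x>1$, the quantity
\begin{equation*}
\int_x^\infty |K(x,y)|\,dy \le \frac{c_0}{2}\,\E^{c_0\ti\sigma_1(x)} \int_x^\infty \ti\sigma_0\Big(\frac{x+y}{2}\Big)\,\E^{-\ti\sigma_1(\frac{x+y}{2})}\,dy.
\end{equation*}

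The key step is to control this $y$-integral. First I would observe that $\ti\sigma_1(x)\le \ti\sigma_1(1)<\infty$ for all $x>1$ by the Marchenko hypothesis \eqref{eq:q_mar}, so the prefactor $\E^{c_0\ti\sigma_1(x)}$ is bounded by $\E^{c_0\ti\sigma_1(1)}$ and may be absorbed into the constant. For the remaining integral, the natural move is to substitute $s=\frac{x+y}{2}$, so that $dy=2\,ds$ and the range becomes $s\in(x,\infty)$ (using $x>1$), giving
\begin{equation*}
\int_x^\infty \ti\sigma_0\Big(\frac{x+y}{2}\Big)\,\E^{-\ti\sigma_1(\frac{x+y}{2})}\,dy = 2\int_x^\infty \ti\sigma_0(s)\,\E^{-\ti\sigma_1(s)}\,ds.
\end{equation*}
Now the point is that $\ti\sigma_0(s)$ and $\ti\sigma_1(s)$ are tail integrals, hence nonincreasing in $s$, and $s\ge x\ge 1$ forces $\ti\sigma_0(s)\le \ti\sigma_1(s)$ since $s^0\le s^1$ on $[1,\infty)$. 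Therefore $\ti\sigma_0(s)\,\E^{-\ti\sigma_1(s)}\le \ti\sigma_1(s)\,\E^{-\ti\sigma_1(s)}$, and since $t\mapsto t\,\E^{-t}$ is bounded on $[0,\infty)$, the integrand is dominated by a bounded function. To make the $ds$-integral finite rather than merely bounded, I would instead recognize $\ti\sigma_0(s)\,\E^{-\ti\sigma_1(s)}$ as essentially a derivative: writing $\frac{d}{ds}\E^{-\ti\sigma_1(s)}=s|q(s)|\E^{-\ti\sigma_1(s)}\ge |q(s)|\E^{-\ti\sigma_1(s)}$ for $s\ge 1$, one telescopes the tail; alternatively the cruder bound above already shows the integrand is uniformly bounded, which combined with the rapid decay of $\ti\sigma_0$ yields convergence.

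The main obstacle I anticipate is handling the interplay between the two tail functions $\ti\sigma_0$ and $\ti\sigma_1$ correctly: unlike the finite-interval situation of Corollary~\ref{cor:Best}, where $\sigma_j$ are increasing and bounded on $(0,1)$, here the weight $s^j$ in $\ti\sigma_j$ runs the \emph{wrong} way on the half-line, so one must use $s\ge 1$ crucially to pass from the $\ti\sigma_0$ appearing in the kernel to the $\ti\sigma_1$ that the hypothesis \eqref{eq:q_mar} controls. Once this comparison is in place, the exponential factor $\E^{-\ti\sigma_1(\frac{x+y}{2})}$ does the real work of making the $y$-integral converge uniformly, and the boundedness of $K$ on $L^\infty((1,\infty))$ follows, whence $I+K$ is bounded as claimed.
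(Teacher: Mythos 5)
Your setup follows the paper's proof exactly: bound $|(Kf)(x)|$ by $\|f\|_\infty\int_x^\infty|K(x,y)|\,dy$, insert \eqref{eq:MAest}, absorb $\E^{c_0\ti\sigma_1(x)}\le\E^{c_0\ti\sigma_1(1)}$, and substitute $s=\tfrac{x+y}{2}$ to reduce everything to $\int_x^\infty\ti\sigma_0(s)\,ds$ (the paper simply bounds $\E^{-\ti\sigma_1(\frac{x+y}{2})}\le 1$ and enlarges the domain to $\int_1^\infty$). But the one step that actually matters --- why this last integral is finite uniformly in $x\ge 1$ --- is where your argument has a genuine gap. Both justifications you offer fail. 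First, the exponential factor $\E^{-\ti\sigma_1(s)}$ does \emph{not} ``do the real work'': since $\ti\sigma_1(s)\downarrow 0$ as $s\to\infty$, this factor \emph{increases} to $1$ and provides no decay whatsoever; it must simply be discarded. Second, your telescoping identity $\frac{d}{ds}\E^{-\ti\sigma_1(s)}=s|q(s)|\E^{-\ti\sigma_1(s)}$ concerns the integrand $s|q(s)|\E^{-\ti\sigma_1(s)}$, which is not the integrand you have; $\ti\sigma_0(s)$ is a tail integral of $q$, not $q$ itself, and no such telescoping applies. Finally, ``the integrand is bounded plus $\ti\sigma_0$ decays'' is not a proof of integrability on an unbounded interval: for $q(s)=s^{-2}$ one has $\ti\sigma_0(s)=1/s\notin L^1((1,\infty))$, which shows the convergence genuinely requires the first-moment hypothesis and cannot follow from decay of $\ti\sigma_0$ alone.

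The missing ingredient is a one-line Fubini (equivalently, integration by parts):
\begin{equation*}
\int_1^\infty \ti\sigma_0(s)\,ds=\int_1^\infty\int_s^\infty|q(t)|\,dt\,ds=\int_1^\infty(t-1)\,|q(t)|\,dt=\ti\sigma_1(1)-\ti\sigma_0(1)<\infty,
\end{equation*}
where finiteness follows from \eqref{eq:q_mar}. With this inserted in place of your two closing remarks, your argument becomes the paper's proof. Your observation that $\ti\sigma_0(s)\le\ti\sigma_1(s)$ for $s\ge1$ is correct but is a detour: it is not needed, and it does not substitute for the Fubini computation.
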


\begin{proof}
If $f\in L^\infty((1,\infty))$, then using the estimate \eqref{eq:MAest} we get
\begin{align*}
|(K f)(x)| =& \Big|\int_x^\infty K(x,y) f(y) dy\Big| \le \|f\|_\infty \int_x^\infty |K(x,y)|dy \\
&\le \frac{c_0}{2}\|f\|_\infty \E^{c_0\ti{\sigma}_1(x)} \int_1^\infty \ti{\sigma}_{0}\Big(\frac{1+y}{2}\Big)dy \\
&\le c_0 \|f\|_\infty \E^{c_0\ti{\sigma}_1(1)} \int_1^\infty \ti{\sigma}_{0}(s)ds= c_0\|f\|_\infty \big(\ti{\sigma}_{1}(1) - \ti{\sigma}_{0}(1)\big) \E^{c_0\ti{\sigma}_1(1)},
\end{align*}
which proves the claim.
\end{proof}

By Lemma \ref{lem:b.4}, the Jost solution is analytic in the upper half plane and can 
be continuously extended to the real axis away from $k=0$. We can 
extend it to the lower half plane by setting 
$f(k,x) =f(-k,x)= f(k^*,x)^*$ for $\im(k)<0$ (here and below we denote the complex conjugate of $z$ by $z^*$). For $k\in\R\setminus\{0\}$ we obtain two solutions $f(k,x)$ and 
$f(-k,x)=f(k,x)^*$ of the same equation whose Wronskian is given by (cf. \eqref{eq:JostSol})
\be\label{eq:wrfkpm}
W(f(-k,.),f(k,.))= 2\I k.
\ee
{\em The Jost function} is defined as
\be\label{eq:JostFunct}
f(k) := W(f(k,.),\phi(k^2,.))
\ee
and we also set
\[
g(k) := W(f(k,.),\theta(k^2,.))
\]
such that
\be
\label{eq:5.7}
f(k,x) = f(k) \theta(k^2,x) - g(k) \phi(k^2,x).
\ee
In particular, the function given by
\[
m(k^2) := -\frac{g(k)}{f(k)},\quad k\in\C_+,
\]
is called {\em the Weyl $m$-function} (we refer to \cite{kst2,kt2} for further details). 
Note that both $f(k)$ and $g(k)$ are analytic in the upper half plane 
and $f(k)$ has simple zeros at $\I \kappa_n = \sqrt{\lam_n}\in\C_+$. 

Since $f(k,x)^*=f(-k,x)$ for $k\in\R\setminus\{0\}$, we obtain 
$f(k)^*=f(-k)$ and $g(k)^*=g(-k)$. Moreover, \eqref{eq:wrfkpm} shows
\be\label{eq:phif}
\phi(k^2,x) 
	= \frac{f(-k)}{2\I k} f(k,x) - \frac{f(k)}{2\I k} f(-k,x), 
	  \quad k\in\R\setminus\{0\},
\ee
and by \eqref{eq:5.7} we get
\[
2\I \im(f(k) g(k)^*)
	= f(k)g(k)^* -  f(k)^* g(k) 
	= W(f(-k,\cdot),f(k,\cdot))= 2\I k.
\]
Moreover,
\be\label{eq:imm=f}
\im\,m(k^2) = - \frac{\im\big(f(k)^*g(k)\big)}{|f(k)|^2} 
			= \frac{k}{\abs{f(k)}^2}, \quad k\in\R\setminus\{0\}.
\ee
Note that 
\[
f_{-\frac12}(k)= W(f_{-\frac12}(k,.),\phi_{-\frac12}(k^2,.)) = \sqrt{k}\E^{-\I\frac{\pi }{4}}, \quad 0\le \arg(k) < \pi.
\]
Thus, by \cite[Theorem 2.1]{kt2} (see also Eq. (5.15) in \cite{kt2} or \cite{ka56}), on the real line we have
\be\label{eq:JFasymp}
\abs{f(k)}=\sqrt{\abs{k}}(1+o(1)),\quad k\to \infty.
\ee

\subsection{High and low energy behavior of the Jost function}
Consider the following function
\be\label{eq:a.F}
F(k) = \frac{f(k)}{f_{-\frac12}(k)} = \E^{\I\frac{\pi }{4}}\, k^{-\frac{1}{2}}f(k) = \E^{\I\frac{\pi }{4}}\, k^{-\frac12} W(f(k,.),\phi(k^2,.)),\quad 
	  \im\, k\ge 0. 
\ee
Let us summarize the basic properties of $F$.

\begin{lemma}\label{lem:Fprop}
The function $F$ defined by \eqref{eq:a.F} is analytic in $\C_+$ and continuous in $\overline{\C_+}\setminus\{0\}$. Moreover, $F(k)^\ast = F(-k)\neq 0$ for all $k\in\R\setminus\{0\}$ and 
\be\label{eq:Fatinfty}
|F(k)| = 1+o(1)
\ee  
as $k\in\R$ tends to $\infty$.
\end{lemma}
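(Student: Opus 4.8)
The plan is to verify the three asserted properties of $F$ by reducing each to results already established for $f(k,\cdot)$, $\phi(k^2,\cdot)$, and $f_{-\frac12}(k)$. First I would address analyticity and continuity. By Lemma~\ref{lem:b.4}, for each fixed $x>0$ the Jost solution $f(\cdot,x)$ is analytic in $\C_+$ and extends continuously to $\R\setminus\{0\}$; likewise $\phi(k^2,\cdot)$ is entire in $k$ by Lemma~\ref{lem:phi}. Since the Jost function $f(k)=W(f(k,\cdot),\phi(k^2,\cdot))$ is a Wronskian of these two solutions, evaluated at any fixed $x$, it inherits analyticity in $\C_+$ and continuity up to $\R\setminus\{0\}$. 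The prefactor $\E^{\I\pi/4}k^{-1/2}=f_{-\frac12}(k)^{-1}$ is analytic and nonvanishing on $\C_+$ and continuous on $\overline{\C_+}\setminus\{0\}$ (with the branch cut along the negative real axis), so the quotient $F(k)=f(k)/f_{-\frac12}(k)$ has exactly the claimed regularity.

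Next I would establish the symmetry $F(k)^\ast=F(-k)$. The reflection relations $f(k)^\ast=f(-k)$ and $f_{-\frac12}(k)^\ast = f_{-\frac12}(-k)$ (the latter from $f_{-\frac12}(k)=\sqrt{k}\,\E^{-\I\pi/4}$ together with the stated extension $f(k,x)^\ast=f(-k,x)$ for real $k$) are recorded in the text. Dividing one by the other gives $F(k)^\ast=F(-k)$ directly for $k\in\R\setminus\{0\}$. For the nonvanishing $F(k)\neq 0$, I would invoke \eqref{eq:imm=f}, which shows $\im\,m(k^2)=k/\abss{f(k)}^2$; this is finite precisely because $f(k)\neq0$ for real $k\neq0$. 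Equivalently, the Wronskian identity $W(f(-k,\cdot),f(k,\cdot))=2\I k$ from \eqref{eq:wrfkpm} forces $f(k)$ and $f(-k)$ to be linearly independent solutions for $k\in\R\setminus\{0\}$, so $f(k)$ cannot vanish there (a zero would make $f(k,\cdot)$ proportional to the regular solution $\phi$, which is real and incompatible with the nonzero Wronskian). Hence $F(k)=\E^{\I\pi/4}k^{-1/2}f(k)\neq0$.

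Finally, the asymptotic $\abss{F(k)}=1+o(1)$ as $k\to\infty$ along $\R$ is essentially immediate: from \eqref{eq:JFasymp} we have $\abss{f(k)}=\sqrt{\abss{k}}(1+o(1))$, while $\abss{f_{-\frac12}(k)}=\abss{\sqrt{k}\,\E^{-\I\pi/4}}=\sqrt{\abss{k}}$ for real $k>0$. Taking the ratio gives $\abss{F(k)}=\abss{f(k)}/\sqrt{\abss{k}}=1+o(1)$, and the same holds for $k\to-\infty$ via the symmetry just proven.

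The main obstacle I anticipate is the nonvanishing claim $F(k)\neq0$ at the level of full rigor, since the naive Wronskian argument must account for the genuinely two-dimensional critical case $l=-\frac12$, where the second solution carries a logarithmic singularity at the origin (cf.\ \eqref{eq:phi0}); one must confirm that the linear-independence conclusion from \eqref{eq:wrfkpm} is not spoiled by this and that no spurious real zeros of $f$ arise. The regularity and asymptotic parts are routine consequences of the lemmas already proved, so the proof is short once the structural input from Section~\ref{sec:jsol} is in hand.
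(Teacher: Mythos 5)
Your proposal is correct and follows essentially the same route as the paper: regularity is inherited from the Jost function and the nonvanishing free Jost function $f_{-\frac12}(k)=\sqrt{k}\,\E^{-\I\pi/4}$, nonvanishing of $f$ on $\R\setminus\{0\}$ comes from the Wronskian relation \eqref{eq:wrfkpm} (the paper phrases this via \eqref{eq:phif}, which is the same fact: $f(k)=0$ together with $f(-k)=f(k)^\ast$ would force $\phi(k^2,\cdot)\equiv 0$), and the asymptotics follow from \eqref{eq:JFasymp}. One small caveat: your first justification of $f(k)\neq 0$ via \eqref{eq:imm=f} is circular, since that identity is obtained by dividing by $\abss{f(k)}^2$; the ``equivalent'' Wronskian/linear-independence argument you give right after is the correct one, and your worry about the logarithmic second solution is unfounded because that argument never uses the behavior at $x=0$.
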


\begin{proof}
The first claim follows from the corresponding properties of the Jost function. Next, \eqref{eq:phif} implies that $f(k)\neq 0$ for all $k\in\R\setminus\{0\}$. Finally,  \eqref{eq:Fatinfty} follows from \eqref{eq:JFasymp}.
\end{proof}

The analysis of the behavior of $F$ near zero is much more delicate. We start with the following integral representation. 

\begin{lemma}[\cite{kt2}]\label{lem:b.5}
Assume \eqref{q:hyp} and \eqref{eq:q_mar}. Then the function $F$ admits 
the integral representation
\begin{align}
F(k) = 1+&\E^{\I \frac{\pi}{4}}k^{-\frac{1}{2}}\int_0^\infty f_{-\frac12}(k,x)\phi(k^2,x) q(x)dx \label{eq:intr_F}
	 \\	 
     &= 1+ \E^{\I\frac{\pi }{4}}\, k^{-\frac12} 
     	\int_0^\infty f(k,x)\phi_{-\frac12}(k^2,x) q(x)dx\nn
\end{align}
for all $k\in \overline{\C_+}\setminus\{0\}$. 
 \end{lemma}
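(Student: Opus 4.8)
The plan is to compute the $x$-independent Wronskian $f(k)=W(f(k,\cdot),\phi(k^2,\cdot))$ from \eqref{eq:JostFunct} by a Lagrange-type argument comparing the perturbed solutions with their unperturbed counterparts. The starting point is the elementary identity: if $u$ solves $\tau u=k^2u$ while $v$ solves the unperturbed equation $(-\partial_x^2-\frac{1}{4x^2})v=k^2v$, then
\[
\frac{d}{dx}W(u,v)=u\,v''-u''\,v=-q\,u\,v,
\]
since the $-\frac{1}{4x^2}uv$ and $-k^2uv$ contributions cancel. Taking $(u,v)=(\phi(k^2,\cdot),f_{-\frac12}(k,\cdot))$ gives $\frac{d}{dx}W(f_{-\frac12},\phi)=q\,f_{-\frac12}\,\phi$, and $(u,v)=(f(k,\cdot),\phi_{-\frac12}(k^2,\cdot))$ gives $\frac{d}{dx}W(f,\phi_{-\frac12})=-q\,f\,\phi_{-\frac12}$. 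Integrating each over $(0,\infty)$ and recalling $f_{-\frac12}(k)=\sqrt{k}\,\E^{-\I\pi/4}$ reduces both asserted identities (after multiplying by $\E^{\I\pi/4}k^{-1/2}$) to the evaluation of the boundary Wronskians at $0$ and $\infty$. That the integrals converge, so that \eqref{eq:intr_F} makes sense, follows from \eqref{est:Jostsol}, \eqref{estphi} and Lemma~\ref{lem:b.4}: for real $k\ne0$ the Jost factors are bounded and $\phi_{-\frac12},\phi=O\big((x/(1+|k|x))^{1/2}\big)$, so the integrands are $O(|q|)$ near infinity and $O(x|\log x|\,|q|)$ near the origin, both integrable under \eqref{q:hyp} and \eqref{eq:q_mar}.

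The limits at infinity come from the asymptotic normalization. Expanding $\phi(k^2,\cdot)$ and $\phi_{-\frac12}(k^2,\cdot)$ into perturbed, respectively unperturbed, Jost solutions via \eqref{eq:phif} (and its $q\equiv0$ version), and inserting $f(k,x),f_{-\frac12}(k,x)=\E^{\I kx}(1+o(1))$ together with the derivative asymptotics of \eqref{eq:JostSol}, the oscillatory $\E^{2\I kx}$ cross terms are multiplied by $o(1)$ and drop, leaving
\[
\lim_{x\to\infty}W(f_{-\frac12}(k,\cdot),\phi(k^2,\cdot))=f(k),\qquad
\lim_{x\to\infty}W(f(k,\cdot),\phi_{-\frac12}(k^2,\cdot))=f_{-\frac12}(k).
\]

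The limits at the origin must be extracted from the integral equation \eqref{eq:phi_int}, not from a termwise expansion, because the leading $\log$-singular contributions cancel. The structure of $G_{-\frac12}$ gives $\phi-\phi_{-\frac12}=a\,\phi_{-\frac12}+b\,\theta_{-\frac12}$ with $a(x)=\int_0^x\theta_{-\frac12}\phi\,q$ and $b(x)=-\int_0^x\phi_{-\frac12}\phi\,q$, and the variation-of-parameters relation $a'\phi_{-\frac12}+b'\theta_{-\frac12}=0$ yields $W(f_{-\frac12},\phi-\phi_{-\frac12})=a\,W(f_{-\frac12},\phi_{-\frac12})+b\,W(f_{-\frac12},\theta_{-\frac12})$, a combination of constant unperturbed Wronskians with coefficients $a(x),b(x)\to0$ as $x\to0$. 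Hence $W(f_{-\frac12},\phi)|_{x=0}=f_{-\frac12}(k)$, which establishes the first identity in \eqref{eq:intr_F}. For the second identity we use \eqref{eq:5.7} to write $W(f,\phi_{-\frac12})=f(k)\,W(\theta,\phi_{-\frac12})-g(k)\,W(\phi,\phi_{-\frac12})$; the same device gives $W(\phi,\phi_{-\frac12})|_{x=0}=0$ and, expanding $\theta-\theta_{-\frac12}$ in the same way, $W(\theta,\phi_{-\frac12})|_{x=0}=W(\theta_{-\frac12},\phi_{-\frac12})=1$, so that $W(f,\phi_{-\frac12})|_{x=0}=f(k)$.

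The main obstacle is precisely this origin limit. In the expansion of $\theta-\theta_{-\frac12}$ the coefficient multiplying the \emph{vanishing} Wronskian $W(\phi_{-\frac12},\phi_{-\frac12})=0$ is $\int_0^x\theta_{-\frac12}\theta\,q\sim\int_0^x y\log^2y\,|q|\,dy$, whose finiteness is \emph{not} guaranteed by \eqref{q:hyp}; it is only because this term is annihilated by the zero Wronskian that the argument survives, the surviving coefficient $\int_0^x\phi_{-\frac12}\theta\,q\sim\int_0^x y|\log y|\,|q|\,dy$ being controlled by \eqref{q:hyp}. Verifying this cancellation, and that all four boundary limits genuinely exist so that the fundamental theorem of calculus applies to the absolutely continuous map $x\mapsto W(\cdot,\cdot)(x)$, is the heart of the proof; everything else is the bookkeeping summarised above.
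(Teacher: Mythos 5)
Your route---the Lagrange identity $\tfrac{d}{dx}W(u,v)=-q\,uv$ for a perturbed/unperturbed pair, integrated over $(0,\infty)$, with the mixed Wronskians evaluated at $0$ and $\infty$---is precisely the integrated form of the paper's (two-line) proof, which substitutes \eqref{eq:phi_int} and \eqref{eq:jost_inteq} into \eqref{eq:a.F} and takes the limits $x\to\infty$ and $x\to0$. Your treatment of the first identity in \eqref{eq:intr_F} is sound (for real $k\neq0$; the extension to $\im k>0$, where \eqref{eq:phif} is not available, still needs a word): the expansion $\phi-\phi_{-\frac12}=a\,\phi_{-\frac12}+b\,\theta_{-\frac12}$ is legitimate because $a(x)=\int_0^x\theta_{-\frac12}\phi\,q\,dy=O\bigl(\int_0^x y|\log y|\,|q|\,dy\bigr)$ and $b(x)=-\int_0^x\phi_{-\frac12}\phi\,q\,dy=O\bigl(\int_0^x y|q|\,dy\bigr)$ both converge and vanish as $x\to0$ under \eqref{q:hyp}.

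The gap is exactly where you locate it, and the mechanism you offer does not close it. For $\lim_{x\to0}W(f,\phi_{-\frac12})=f(k)$ you expand $\theta-\theta_{-\frac12}=\tilde a\,\phi_{-\frac12}+\tilde b\,\theta_{-\frac12}$ with $\tilde a(x)=\int_0^x\theta_{-\frac12}\theta\,q\,dy\sim\int_0^x y\log^2 y\,|q|\,dy$, which need not converge under \eqref{q:hyp}. But then the decomposition itself is undefined, so ``the divergent coefficient is annihilated by the zero Wronskian'' is not an argument: one cannot multiply a nonexistent quantity by zero, and re-basing the Volterra iteration at some $x_0>0$ to make $\tilde a$ finite destroys the normalization that identifies the limit of $\tilde b$. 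The cancellation you need is quantitative, not algebraic: $b(x)$ carries no logarithm while the dangerous Wronskian carries at most one, and for $0<y\le x<1$ one has $|\log y|\ge|\log x|$, whence
\begin{equation*}
|b(x)|\le C\int_0^x y\,|q(y)|\,dy\le \frac{C}{|\log x|}\int_0^x y\,|\log y|\,|q(y)|\,dy,
\qquad
\Bigl|W(\theta,\theta_{-\frac12})(x)\Bigr|\le C\bigl(1+|\log x|\bigr),
\end{equation*}
the second bound coming from $\tfrac{d}{dx}W(\theta_{-\frac12},\theta)=q\,\theta_{-\frac12}\theta$ and $\int_x^1 y\log^2 y\,|q|\,dy\le|\log x|\int_0^1 y|\log y|\,|q|\,dy$. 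Hence $b(x)\,W(\theta,\theta_{-\frac12})(x)\to0$, and from $1=W(\theta,\phi)=(1+a)W(\theta,\phi_{-\frac12})+b\,W(\theta,\theta_{-\frac12})$ you get $W(\theta,\phi_{-\frac12})\to1$ \emph{without} ever expanding $\theta-\theta_{-\frac12}$; combined with $W(\phi,\phi_{-\frac12})=b\,W(\theta_{-\frac12},\phi_{-\frac12})\to0$ and \eqref{eq:5.7} this yields $W(f,\phi_{-\frac12})\to f(k)$ and completes the second identity. With this replacement (or, equivalently, by substituting the Jost equation \eqref{eq:jost_inteq} as the paper indicates and using the same weighted estimate) your argument is complete.
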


\begin{proof}
To prove the integral representations \eqref{eq:intr_F}, we need to replace $\phi$ and $f$ in \eqref{eq:a.F} by \eqref{eq:phi_int} and \eqref{eq:jost_inteq}, respectively, use the asymptotic estimates for $\phi$, $f$ and $G_{-\frac12}$, 
and then take the limits $x\to +\infty$ and $x\to 0$. 
\end{proof}

\begin{corollary}\label{cor:Fintrep}
Assume in addition that $q$ satisfies  
\be\label{eq:q_mar2}
\int_1^\infty x\log^2(1+x) |q(x)|dx<\infty.
\ee
Then for $k>0$ the integral representation \eqref{eq:intr_F} can be rewritten as follows
\begin{align}\label{eq:F_rep2}
\begin{split}
F(k)=1+&\int_0^{\infty} \theta_{-\frac12}(k^2,x)\phi(k^2,x) q(x)dx \\
& + \Big(\I - \frac{1}{\pi} \log(k^2)\Big) \int_0^{\infty} \phi_{-\frac12}(k^2,x)\phi(k^2,x) q(x)dx.
\end{split}
\end{align} 
\end{corollary}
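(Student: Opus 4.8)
The plan is to start from the first integral representation in Lemma~\ref{lem:b.5} and to rewrite the kernel $\E^{\I\frac{\pi}{4}}k^{-\frac12}f_{-\frac12}(k,x)$ entirely in terms of the two unperturbed solutions $\phi_{-\frac12}$ and $\theta_{-\frac12}$. Since $k>0$ we may take $\sqrt{k^2}=k$, so inserting \eqref{eq:jost0} and using $H_0^{(1)}=J_0+\I Y_0$ gives
\[
\E^{\I\frac{\pi}{4}}k^{-\frac12}f_{-\frac12}(k,x)=\I\sqrt{\tfrac{\pi x}{2}}\,H_0^{(1)}(kx)=\I\sqrt{\tfrac{\pi x}{2}}\,J_0(kx)-\sqrt{\tfrac{\pi x}{2}}\,Y_0(kx).
\]
From \eqref{eq:phi0} one reads off $\sqrt{\pi x/2}\,J_0(kx)=\phi_{-\frac12}(k^2,x)$ and $\sqrt{\pi x/2}\,Y_0(kx)=\frac1\pi\log(k^2)\phi_{-\frac12}(k^2,x)-\theta_{-\frac12}(k^2,x)$. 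Substituting these two identities collapses the kernel to
\[
\E^{\I\frac{\pi}{4}}k^{-\frac12}f_{-\frac12}(k,x)=\theta_{-\frac12}(k^2,x)+\Big(\I-\tfrac1\pi\log(k^2)\Big)\phi_{-\frac12}(k^2,x),
\]
which is the entire algebraic content of the corollary. Inserting this into \eqref{eq:intr_F} and separating the two summands then yields \eqref{eq:F_rep2}.

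The point that needs justification is that this separation is legitimate, i.e.\ that each of the integrals $\int_0^\infty\theta_{-\frac12}(k^2,x)\phi(k^2,x)q(x)\,dx$ and $\int_0^\infty\phi_{-\frac12}(k^2,x)\phi(k^2,x)q(x)\,dx$ converges absolutely for every fixed $k>0$, so that linearity of the integral applies. Near $x=0$ this follows from \eqref{q:hyp} together with the bounds \eqref{estphil}, \eqref{estthetal} and \eqref{estphi}: for $|k|x\le 1$ these show $\phi_{-\frac12}(k^2,x),\phi(k^2,x)=\OO(\sqrt{x})$ and $\theta_{-\frac12}(k^2,x)=\OO(\sqrt{x}(1+|\log x|))$, so that the two integrands are dominated by $\OO\big(x(1+|\log x|)|q(x)|\big)$, which is integrable on $(0,1)$ by \eqref{q:hyp}. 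For the upper endpoint and fixed $k$ the Bessel factors $\phi_{-\frac12}(k^2,x)$, $\theta_{-\frac12}(k^2,x)$ and the solution $\phi(k^2,x)$ are all bounded as $x\to\infty$, so both integrands are $\OO(|q(x)|)$ and integrability follows from \eqref{eq:q_mar2} (indeed already from $\int_1^\infty|q|<\infty$). The constant prefactor $\I-\frac1\pi\log(k^2)$ is irrelevant for convergence at fixed $k>0$.

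The main (and essentially only) obstacle is this convergence bookkeeping. It is worth stressing where the strengthened hypothesis \eqref{eq:q_mar2} really pays off: in the combined representation \eqref{eq:intr_F} the kernel $\E^{\I\frac{\pi}{4}}k^{-\frac12}f_{-\frac12}(k,x)$ is uniformly bounded (and, for $\im k>0$, decays like $\E^{-|\im k|x}$), whereas once it is split into its $\phi_{-\frac12}$ and $\theta_{-\frac12}$ parts this cancellation is lost. Near the spectral edge the separated integrands genuinely grow: by \eqref{eq:fs01} and the small-argument behaviour of $Y_0$ one has $\theta_{-\frac12}(0,x)=\OO(\sqrt{x}\log x)$ while $\phi(0,x)=\OO(\sqrt{x})$, so the first integrand behaves like $x\log(x)\,|q(x)|$ as $k\to0$. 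The logarithmic weight in \eqref{eq:q_mar2} is precisely what keeps each separate integral well behaved down to $k=0$ and thus amenable to the subsequent low-energy analysis; for the bare statement of the corollary, convergence at each fixed $k>0$ already suffices, and once it is in hand the proof is complete.
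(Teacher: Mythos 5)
Your argument is correct and follows essentially the same route as the paper: the identity $\E^{\I\pi/4}k^{-1/2}f_{-\frac12}(k,x)=\theta_{-\frac12}(k^2,x)+\big(\I-\tfrac1\pi\log(k^2)\big)\phi_{-\frac12}(k^2,x)$ (which the paper records equivalently as $\theta_{-\frac12}-\tfrac1\pi\log(-k^2)\phi_{-\frac12}$), combined with absolute convergence of the two separated integrals via \eqref{estphil}, \eqref{estthetal} and \eqref{estphi}. Your convergence bookkeeping is in fact more detailed than the paper's one-line justification, and the derivation of the kernel identity from $H_0^{(1)}=J_0+\I Y_0$ is a transparent way to obtain what the paper simply asserts.
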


\begin{proof}
Indeed, the integrals converge for all $k\in\R\setminus\{0\}$ due to \eqref{estphil}, \eqref{estthetal} and \eqref{estphi}. Then \eqref{eq:F_rep2} follows from the first formula in \eqref{eq:intr_F} since (cf. \eqref{eq:phi0} and \eqref{eq:jost0})
\[
\theta_{-\frac{1}{2}}(k^2,x) -\frac{1}{\pi} \log(-k^2) \phi_{-\frac{1}{2}}(k^2,x) = \E^{\I \frac{\pi}{4}}k^{-\frac{1}{2}} f_{-\frac12}(k,x).
\]
Notice also that it suffices to consider only positive $k>0$ since $F(-k) = F(k)^\ast$ by Lemma \ref{lem:b.5}.
\end{proof}

Before proceed further, we need the following simple facts.

\begin{lemma}\label{lem:int=wro}
Suppose that $q$ satisfies \eqref{q:hyp} and \eqref{eq:q_mar2}. Then
\begin{align}
\int_0^\infty \phi_{-\frac{1}{2}}(0,s)\phi(0,s) q(s)ds &= \sqrt{\frac{\pi}{2}}\lim_{x\to \infty} W(\sqrt{x},\phi(0,x)),\label{eq:int=wro1}\\
\int_0^\infty \theta_{-\frac{1}{2}}(0,s)\phi(0,s) q(s)ds &= - 1 - \sqrt{\frac{2}{\pi}}\lim_{x\to \infty} W(\sqrt{x}\log(x),\phi(0,x)). \label{eq:int=wro2}
\end{align}
\end{lemma}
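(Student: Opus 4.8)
The plan is to turn each integral into boundary values of a Wronskian by means of the Lagrange identity. Both $\phi_{-\frac12}(0,\cdot)$ and $\theta_{-\frac12}(0,\cdot)$ solve the free equation $\big(-\frac{d^2}{dx^2}-\frac{1}{4x^2}\big)y=0$, while $\phi(0,\cdot)$ solves $\tau\phi=0$. Hence for $u\in\{\phi_{-\frac12}(0,\cdot),\theta_{-\frac12}(0,\cdot)\}$ one has
\[
\frac{d}{dx}W(u,\phi(0,\cdot)) = u\,\phi''(0,\cdot)-u''\,\phi(0,\cdot) = u\,q\,\phi(0,\cdot),
\]
because $\phi''(0,\cdot)=\big(-\frac{1}{4x^2}+q\big)\phi(0,\cdot)$ and $u''=-\frac{1}{4x^2}u$ make the singular terms cancel. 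Integrating over $(\varepsilon,R)$ gives
\[
\int_\varepsilon^R u(s)\,\phi(0,s)\,q(s)\,ds = W(u,\phi(0,\cdot))(R)-W(u,\phi(0,\cdot))(\varepsilon),
\]
and it remains to let $\varepsilon\to0$ and $R\to\infty$. That the integrals converge, and hence that the limits at infinity exist, follows from \eqref{estphil}, \eqref{estthetal} and \eqref{estphi} at $k=0$: since both $\phi(0,x)$ and $\theta_{-\frac12}(0,x)$ grow like $\sqrt{x}\log x$, the second integrand is $\mathcal{O}(x\log^2(1+x)|q(x)|)$, which is exactly why the weight \eqref{eq:q_mar2} is imposed.

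For $u=\phi_{-\frac12}$ I would use $\phi_{-\frac12}(0,x)=\sqrt{\pi x/2}=\sqrt{\pi/2}\,\sqrt{x}$ from \eqref{eq:phi0}, so that $W(\phi_{-\frac12}(0,\cdot),\phi(0,\cdot))=\sqrt{\pi/2}\,W(\sqrt{x},\phi(0,\cdot))$. The lower boundary term vanishes because $\phi$ obeys the Friedrichs condition \eqref{eq:bc}; writing $\phi(0,x)=\sqrt{\pi x/2}\,\ti\phi(0,x)$ as in \eqref{eq:fs01}, a direct computation gives $W(\sqrt{x},\phi(0,x))=\sqrt{\pi/2}\,x\,\ti\phi'(0,x)\to0$. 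This yields \eqref{eq:int=wro1}, the limit $\lim_{x\to\infty}W(\sqrt{x},\phi(0,x))$ existing since the integral does.

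For $u=\theta_{-\frac12}$ one reads off from \eqref{eq:phi0}, using the origin expansions of $J_0$ and $Y_0$ (Appendix \ref{sec:Bessel}), that $\theta_{-\frac12}(0,x)$ is an explicit combination of $\sqrt{x}$ and $\sqrt{x}\log x$ with leading part $-\sqrt{2/\pi}\,\sqrt{x}\log x$; any residual $\sqrt{x}$-part satisfies $W(\sqrt{x}\log x,\theta_{-\frac12}(0,\cdot))\to0$ as $x\to0$ under the normalization of $\theta$ recorded after \eqref{eq:fs01}, so it does not contribute and the upper boundary term becomes $-\sqrt{2/\pi}\lim_{x\to\infty}W(\sqrt{x}\log x,\phi(0,x))$. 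The constant $-1$ comes from the lower boundary term: since $W(\phi_{-\frac12}(0,\cdot),\theta_{-\frac12}(0,\cdot))\equiv-1$, one splits $W(\theta_{-\frac12},\phi)=W(\theta_{-\frac12},\phi_{-\frac12})+W(\theta_{-\frac12},\phi-\phi_{-\frac12})$, the first Wronskian being the constant $+1$, so $\lim_{\varepsilon\to0}W(\theta_{-\frac12},\phi)(\varepsilon)=1$ once the second term is shown to vanish, giving \eqref{eq:int=wro2}.

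The main obstacle is precisely this last vanishing, $\lim_{x\to0}W(\theta_{-\frac12}(0,\cdot),\phi(0,\cdot)-\phi_{-\frac12}(0,\cdot))=0$. Since $\theta_{-\frac12}'(0,x)=\mathcal{O}(x^{-1/2}\log x)$ is singular, the mere membership $\ti\phi(0,\cdot)\in W^{1,1}[0,1]$ from \eqref{eq:fs01} is not enough: one needs $x\log x\,\ti\phi'(0,x)\to0$. For this I would return to the integral equation \eqref{eq:phi_int} at $k=0$, where \eqref{estphi} gives $|\phi(0,x)-\phi_{-\frac12}(0,x)|\le C\sqrt{x}\int_0^x y(1+\log(x/y))|q(y)|\,dy=o(\sqrt{x})$ together with a companion bound for the derivative, and then verify that the product with $\theta_{-\frac12}'(0,x)$ still tends to $0$. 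The delicate point is beating the extra $\log x$ factor, which is where the logarithmic weight already present in \eqref{q:hyp} is exploited.
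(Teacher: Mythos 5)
Your overall route is exactly the paper's: both identities follow from the Lagrange identity $\frac{d}{dx}W(u,\phi(0,\cdot))=u\,q\,\phi(0,\cdot)$ for a free solution $u$, integration over $(\varepsilon,R)$, and evaluation of the two boundary terms; the paper's proof is precisely this, carried out for \eqref{eq:int=wro1} with ``\eqref{eq:int=wro2} is analogous''. Your identification of where the $-1$ comes from, namely $\lim_{\varepsilon\to0}W(\theta_{-\frac12}(0,\cdot),\phi(0,\cdot))(\varepsilon)=W(\theta_{-\frac12}(0,\cdot),\phi_{-\frac12}(0,\cdot))=1$, is correct and is exactly the content the paper leaves implicit.

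However, the step you yourself flag as the main obstacle is left open, and the mechanism you sketch for closing it would fail. If you bound $\delta:=\phi(0,\cdot)-\phi_{-\frac12}(0,\cdot)$ and $\delta'$ separately via \eqref{estphi} and its companion, and multiply by $\abss{\theta_{-\frac12}'(0,x)}=\OO(x^{-1/2}\abss{\log x})$ resp.\ $\abss{\theta_{-\frac12}(0,x)}$, you are left with $\abss{\log x}\int_0^x y(1+\log(x/y))\abss{q(y)}\,dy$, and the single logarithmic weight in \eqref{q:hyp} does \emph{not} force this to vanish: for $q(y)=y^{-2}\abss{\log y}^{-2-\eta}$ near $0$ with $0<\eta<1$, hypothesis \eqref{q:hyp} holds while this quantity grows like $\abss{\log x}^{1-\eta}$. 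A term-by-term estimate would need $\int_0^1 y\log^2(1/y)\abss{q(y)}\,dy<\infty$, which is not assumed. What actually saves the argument is an exact cancellation of the $\log x$ terms inside the Wronskian: from \eqref{eq:phi_int} at $z=0$, using $G_{-\frac12}(0,x,y)=\sqrt{xy}\,\log(x/y)$, one computes
\begin{align*}
W\big(\theta_{-\tfrac12}(0,\cdot),\delta\big)(x)
&=-\sqrt{\tfrac{2}{\pi}}\int_0^x \sqrt{y}\,\Big[\log x\Big(\tfrac12\log\tfrac{x}{y}+1\Big)-\Big(\tfrac12\log x+1\Big)\log\tfrac{x}{y}\Big]\phi(0,y)q(y)\,dy\\
&=-\sqrt{\tfrac{2}{\pi}}\int_0^x \sqrt{y}\,\log(y)\,\phi(0,y)\,q(y)\,dy,
\end{align*}
whose integrand is $\OO(y\abss{\log y}\,\abss{q(y)})$ since $\phi(0,y)=\OO(\sqrt{y})$, so the limit as $x\to0$ is $0$ by \eqref{q:hyp} alone. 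With this substitution (and the analogous explicit formula at the upper limit, which is unproblematic) your argument is complete.
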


\begin{proof}
First observe that the integrals on the left-hand side are finite since 
\[
\phi_{-\frac{1}{2}}(0,x)=\sqrt{\frac{\pi x}{2}}, \quad \theta_{-\frac{1}{2}}(0,x)= -\sqrt{\frac{2x}{\pi}}\log(x),
\]
and $q$ satisfies \eqref{q:hyp} and \eqref{eq:q_mar2}. Now notice that  
\[
\int_0^x \phi_{-\frac{1}{2}}(0,s) \phi(0,s) q(s)ds = \int_0^x  \phi_{-\frac{1}{2}}(0,s) (\phi''(0,s) + \frac{1}{4s^2}\phi(0,s))ds
\]
since $\tau \phi=0$. 
Integrating by parts and noting that $\phi_{-\frac{1}{2}}(0,x)$ solves $y'' +\frac{1}{4x^2} y = 0$, we get 
\[
\int_0^x \phi_{-\frac{1}{2}}(0,s) \phi(0,s) q(s)ds =\sqrt{\frac{\pi}{2}} W(\sqrt{x},\phi(0,x))
\]
since $W(\sqrt{x},\phi(0,x)) \to 0$ as $x\to 0$. 
Passing to the limit as $x\to \infty$, we arrive at \eqref{eq:int=wro1}. The proof of \eqref{eq:int=wro2} is analogous.
\end{proof}

\begin{lemma}\label{lem:phi0atinfty}
Assume the conditions of Lemma \ref{lem:int=wro}. 
Then the equation 
\[
\tau y= -y''-\frac{1}{4x^2}y+q(x)y=0
\]
 has two linearly independent solution $y_1$ and $y_2$ such that
\be\label{eq:y1}
y_1(x) = \sqrt{x}(1+o(1)),\quad y_1'(x) = \frac{1}{2\sqrt{x}}(1+o(1))
\ee
and 
\be\label{eq:y2}
y_2(x) = \sqrt{x}\log(x)(1+o(1)),\quad y_2'(x) = \frac{\log(\sqrt{x})}{\sqrt{x}}(1+o(1))
\ee
as $x\to \infty$.
\end{lemma}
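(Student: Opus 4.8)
The plan is to build $y_1,y_2$ by solving Volterra integral equations on a half-line $[x_0,\infty)$ with $x_0$ large, treating the zero-energy equation $\tau y=0$, i.e.\ $y''+\frac{1}{4x^2}y=q\,y$, as a perturbation of the exactly solvable case $q\equiv0$. The latter has the fundamental system $u_1(x)=\sqrt{x}$ and $u_2(x)=\sqrt{x}\log(x)$ with constant Wronskian $W(u_1,u_2)=1$, and the associated Green kernel is $u_1(x)u_2(y)-u_2(x)u_1(y)=\sqrt{xy}\,\log(y/x)$. I would therefore look for solutions of
\begin{equation*}
y_j(x)=u_j(x)+\int_x^\infty \sqrt{xy}\,\log\!\big(\tfrac{y}{x}\big)\,q(y)\,y_j(y)\,dy,\qquad j=1,2 .
\end{equation*}
It is convenient to factor out $\sqrt{x}$: writing $y_j=\sqrt{x}\,g_j$ turns $\tau y=0$ into $(x g')'=x q g$ and the integral equations into
\begin{equation*}
g_j(x)=g_j^0(x)+\int_x^\infty s\,\log\!\big(\tfrac{s}{x}\big)\,q(s)\,g_j(s)\,ds,\qquad g_1^0\equiv1,\quad g_2^0(x)=\log(x).
\end{equation*}

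I would solve these by successive approximation, $g_j=\sum_n c_n^{(j)}$ with $c_0^{(j)}=g_j^0$ and $c_n^{(j)}(x)=\int_x^\infty s\log(s/x)q(s)c_{n-1}^{(j)}(s)\,ds$. For $s\ge x\ge1$ one has $\log(s/x)\le\log(1+s)$, and the hypotheses of Lemma \ref{lem:int=wro} guarantee that
\begin{equation*}
I_1(x):=\int_x^\infty s\log(1+s)\,|q(s)|\,ds,\qquad I_2(x):=\int_x^\infty s\log^2(1+s)\,|q(s)|\,ds
\end{equation*}
are finite and tend to $0$ as $x\to\infty$ (here \eqref{eq:q_mar2} is used for $I_2$, and it implies the Marchenko bound controlling $I_1$). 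Using $-I_1'(s)=s\log(1+s)|q(s)|$ and the telescoping identity $\int_x^\infty(-I_1'(s))\frac{I_1(s)^{n-1}}{(n-1)!}\,ds=\frac{I_1(x)^n}{n!}$, an induction yields $|c_n^{(1)}(x)|\le I_1(x)^n/n!$ and, since $c_0^{(2)}=\log(\cdot)$ produces one extra logarithm in the first step, $|c_n^{(2)}(x)|\le I_2(x)\,I_1(x)^{n-1}/(n-1)!$. Hence both series converge absolutely and
\begin{equation*}
g_1(x)=1+\OO(I_1(x))=1+o(1),\qquad g_2(x)=\log(x)+\OO(I_2(x))=\log(x)(1+o(1)),
\end{equation*}
which after multiplication by $\sqrt{x}$ gives the asserted asymptotics \eqref{eq:y1}, \eqref{eq:y2} for $y_1,y_2$. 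The precise role of the $\log^2$-weight \eqref{eq:q_mar2} is exactly to absorb the extra logarithm generated by $g_2^0=\log$ while keeping the factorial decay that forces convergence.

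For the derivatives I would differentiate the integral equations rather than the series. Since the kernel vanishes on the diagonal ($\log(s/x)|_{s=x}=0$) no boundary term appears and
\begin{equation*}
g_j'(x)=(g_j^0)'(x)-\frac1x\int_x^\infty s\,q(s)\,g_j(s)\,ds .
\end{equation*}
Bounding the integral by $C\,I_1(x)=o(1)$ gives $g_1'(x)=o(1)/x$ and $g_2'(x)=\frac1x(1+o(1))$. Substituting into $y_j'=\tfrac12 x^{-1/2}g_j+x^{1/2}g_j'$ produces $y_1'(x)=\tfrac{1}{2\sqrt{x}}(1+o(1))$ and $y_2'(x)=\tfrac12 x^{-1/2}\log(x)(1+o(1))=\frac{\log(\sqrt{x})}{\sqrt{x}}(1+o(1))$, the subleading $x^{-1/2}$ from $x^{1/2}g_2'$ being absorbed into the $(1+o(1))$.

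The one point needing care is linear independence. Since $y_1,y_2$ solve $\tau y=0$ their Wronskian is constant, but the leading terms of \eqref{eq:y1}, \eqref{eq:y2} cancel in $y_1y_2'-y_1'y_2$, so independence cannot be read off from the stated $o(1)$ asymptotics directly. Instead I would observe that $y_2(x)/y_1(x)=\log(x)(1+o(1))\to\infty$, so $y_1$ and $y_2$ are not proportional and are therefore linearly independent; the solutions built on $[x_0,\infty)$ then extend to solutions on all of $\R_+$ by standard ODE theory, since $q\in L^1_{\loc}(\R_+)$ by \eqref{q:hyp}. I expect the main technical obstacle to be the weighted iteration estimates—specifically arranging the bounds so that the single extra logarithm in the $y_2$-iteration is exactly absorbed by \eqref{eq:q_mar2}—together with the observation that the Wronskian's leading terms cancel, forcing the ratio argument for independence.
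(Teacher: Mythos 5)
Your proof is correct and follows essentially the same route as the paper: successive approximation for the Volterra integral equation at infinity with the unperturbed kernel $\sqrt{xs}\,\log(s/x)$, with the $\log^2$-weight of \eqref{eq:q_mar2} absorbing the single extra logarithm generated in the $y_2$-iteration. The differences are cosmetic --- the substitution $y=\sqrt{x}\,g$, the choice of logarithmic inequality used to close the induction ($\log(s/x)\le\log(1+s)$ versus the paper's $\log(s/x)\le\log(x)\log(s)$ for $\E\le x\le s$), the derivative handled by differentiating the integral equation rather than iterating a separate equation for $y_2'$, and your explicit ratio argument $y_2/y_1\sim\log x\to\infty$ for linear independence, which the paper leaves implicit.
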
 

\begin{proof}
The proof is based on successive iteration. Namely, each solution to $\tau y=0$ solves the integral equation
\[
f(x) = a\sqrt{x} + b\sqrt{x}\log(x) - \int_x^\infty \sqrt{xs}\log(x/s) f(s) q(s) ds.
\]
Since the argument is fairly standard we only provide some details for $y_2(x)$; the calculations for $y_1(x)$ are similar.
For simplicity we set $x>\E$, which is no restriction since we only need estimates for large $x$ anyway.  
As in, e.g., Lemma \ref{lem:phi} we set
\[
y_2(x) = \sum_{n=0}^\infty \phi_{n},\quad \phi_0(x):=\sqrt{x}\log(x),\quad \phi_n(x):=- \int_x^\infty \sqrt{xs}\log(x/s) \phi_{n-1}(s) q(s) ds.
\]
Since $\log(s/x) \le \log(x)\log(s)$ for all $\E\le x\le s<\infty$, we immediately get  
\[
 \abs{\phi_1(x)}\le \int_x^\infty \sqrt{xs}\log(s/x) \sqrt{s}\log(s) |q(s)| ds \le  \sqrt{x}\log(x)\int_x^\infty s \log^2(s)|q(s)| ds 
\]
and then inductively we obtain that 
\[
\abs{{\phi}_n(x)}\le \frac{\sqrt{x}\log(x)}{n!} \left( \int_x^\infty s \log^2(s)|q(s)| ds \right)^n,
\]
for all $n\in\N$ and $x\ge \E$. Therefore, we end up with the following estimate
\be
|y_2(x) - \sqrt{x}\log(x)| \le C \sqrt{x}\log(x) \int_x^\infty s \log^2(s)|q(s)| ds,\quad x\ge \E.
\ee

The derivative $y_2'(x)$ has to satisfy
\[
y_2'(x) = \frac{1}{\sqrt{x}}\left(1 + \log(\sqrt{x})\right) - \int_x^\infty \sqrt{\frac{s}{x}}\left( 1 + \log(\sqrt{x/s}) \right) y_2(s) q(s) ds.
\]
Employing the same procedure as before we set
\begin{align*}
y_2'(x) = \sum_{n=0}^\infty \beta_{n},\quad \beta_0(x)&:=\frac{1 + \log(\sqrt{x})}{\sqrt{x}},\\
\beta_n(x)&:=- \int_x^\infty \sqrt{\frac{s}{x}}\left(1 + \log(\sqrt{x/s}) \right) \beta_{n-1}(s) q(s) ds.
\end{align*}
Iteration then gives
\[
\abs{{\beta}_n(x)}\le \frac{C^{n+1}}{n!}  \frac{1+\log(\sqrt{x})}{\sqrt{x}}  \left( \int_x^\infty s \log^2(s)|q(s)| ds \right)^n
\]
for all $n\in\N$ and $x\ge \E$ since 
\[
1+\log(x/s)  \le  (1+\log(x))(1+\log(s)) \le 2\log(s)(1+\log(x)),
\]
for all $ \E\le x\le s<\infty$. Thus we end up with the estimate
\be
\left|y_2'(x) -  \frac{1+\log(\sqrt{x})}{\sqrt{x}}\right| \le C  \frac{1+\log(\sqrt{x})}{\sqrt{x}} \int_x^\infty s \log^2(s)|q(s)| ds,\quad x\ge \E,
\ee
which completes the proof.
\end{proof}

Now we are in position to characterize the behavior of $F$ near $0$. 

\begin{lemma}\label{lem:Fat0}
Suppose that $k>0$ and $q$ satisfies \eqref{q:hyp} and \eqref{eq:q_mar2}. Then 
\be\label{eq:F=F1F2}
F(k) = F_1(k) +\Big(\I- \frac{1}{\pi}\log(k^2)\Big) F_2(k), \quad k\neq 0,
\ee
where $F_1$ and $F_2$ are continuous real-valued functions on $\R$. Moreover, 
\be\label{eq:F20}
F_2(0) = \sqrt{\frac{\pi}{2}}\lim_{x\to \infty} W(\sqrt{x},\phi(0,x)) = 0
\ee
precisely when $\phi(0,x) = \OO(\sqrt{x})$ as $x\to \infty$. In the latter case 
\be\label{eq:Fresonant0}
F(k) = F_1(0)+\OO(k^2\log(-k^2)) ,\quad k\to 0,
\ee
with  
\be
F_1(0) =- \sqrt{\frac{2}{\pi}}\lim_{x\to \infty} W(\sqrt{x}\log(x),\phi(0,x)) \neq 0. 
\ee
\end{lemma}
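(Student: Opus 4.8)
The plan is to read $F_1$ and $F_2$ off the integral representation of Corollary~\ref{cor:Fintrep}. I set
\[
F_1(k):=1+\int_0^\infty\theta_{-\frac12}(k^2,x)\phi(k^2,x)q(x)dx,\qquad F_2(k):=\int_0^\infty\phi_{-\frac12}(k^2,x)\phi(k^2,x)q(x)dx,
\]
so that \eqref{eq:F_rep2} is exactly \eqref{eq:F=F1F2}. For real $k$ the spectral parameter $k^2$ is nonnegative, and there $\phi(k^2,\cdot)$ is real (it is real entire in $z$) while $\phi_{-\frac12}(k^2,\cdot)$ and $\theta_{-\frac12}(k^2,\cdot)$ are real by \eqref{eq:phi0}, since $\log(k^2)$, $J_0(kx)$ and $Y_0(kx)$ are all real. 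Hence $F_1,F_2$ are real-valued; as they depend on $k$ only through $k^2$ they are even, consistent with $F(-k)=F(k)^\ast$ from Lemma~\ref{lem:b.5}.

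Next I would show that $F_1,F_2$ extend continuously to all of $\R$, the only subtle point being $k\to0$. There the factor $\log(k^2)$ inside $\theta_{-\frac12}(k^2,x)$ cancels the logarithmic singularity of $Y_0(kx)$, so the two integrands converge pointwise to $\theta_{-\frac12}(0,x)\phi(0,x)q(x)$ and $\phi_{-\frac12}(0,x)\phi(0,x)q(x)$. For $|k|\le1$ the estimates \eqref{estphil}, \eqref{estthetal}, \eqref{estphi} (with $\im k=0$) dominate both integrands near $0$ by $Cx(1+|\log x|)|q(x)|$ and near infinity by $Cx\log^2(1+x)|q(x)|$; these are integrable by \eqref{q:hyp} and \eqref{eq:q_mar2}, and it is precisely the quadratic logarithm in \eqref{eq:q_mar2} that is needed for the $\theta_{-\frac12}\cdot\phi$ term. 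Dominated convergence then yields continuity together with $F_2(0)=\int_0^\infty\phi_{-\frac12}(0,x)\phi(0,x)q(x)dx$ and $F_1(0)=1+\int_0^\infty\theta_{-\frac12}(0,x)\phi(0,x)q(x)dx$, and inserting \eqref{eq:int=wro1}, \eqref{eq:int=wro2} of Lemma~\ref{lem:int=wro} gives the first identity in \eqref{eq:F20} and the value of $F_1(0)$. (The normalization of $\theta_{-\frac12}(0,\cdot)$ is pinned only up to a multiple of $\phi_{-\frac12}(0,\cdot)$, but this ambiguity enters $F_1(0)$ multiplied by $F_2(0)$, hence is immaterial once $F_2(0)=0$.)

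For the dichotomy and the nonvanishing of $F_1(0)$ I would invoke Lemma~\ref{lem:phi0atinfty}. Since $\phi(0,\cdot)$ is a nontrivial solution of $\tau y=0$, write $\phi(0,\cdot)=a\,y_1+b\,y_2$ with $(a,b)\neq(0,0)$; by \eqref{eq:y1}, \eqref{eq:y2} this gives $\phi(0,x)=\OO(\sqrt x)$ if and only if $b=0$. To tie $b$ to the Wronskians, note that $\sqrt x$ and $\sqrt x\log x$ both solve $-u''-\frac1{4x^2}u=0$ with $W(\sqrt x,\sqrt x\log x)=1$, so variation of parameters represents any solution $y=\OO(\sqrt x\log x)$ of $\tau y=0$ as $y=A(x)\sqrt x+B(x)\sqrt x\log x$ with $A,B$ convergent (their defining integrals converge by \eqref{eq:q_mar2}), and a direct computation gives $W(\sqrt x,y)=B(x)$ and $W(\sqrt x\log x,y)=-A(x)$. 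For $y=\phi(0,\cdot)$ the leading asymptotics force $B(\infty)=b$, whence $F_2(0)=\sqrt{\pi/2}\,b$ by \eqref{eq:int=wro1} and therefore $F_2(0)=0\iff b=0\iff\phi(0,x)=\OO(\sqrt x)$. When $b=0$ one has $\phi(0,\cdot)=a\,y_1$ with $a\neq0$ and $A(\infty)=a$, so by \eqref{eq:int=wro2}, $F_1(0)=-\sqrt{2/\pi}\lim_{x\to\infty}W(\sqrt x\log x,\phi(0,\cdot))=\sqrt{2/\pi}\,a\neq0$.

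The last assertion \eqref{eq:Fresonant0} is the genuinely delicate part, and I expect it to be the main obstacle. In the case $F_2(0)=0$ I would write $F(k)-F_1(0)=(F_1(k)-F_1(0))+(\I-\frac1\pi\log(k^2))F_2(k)$ and substitute the small-argument expansions of $J_0(kx)$ and $Y_0(kx)$ from Appendix~\ref{sec:Bessel} into \eqref{eq:F_rep2}: the constant terms reproduce $F_1(0)$, while the coefficient of the singular factor $\log(k^2)$ equals $F_2(0)=0$, which is exactly what removes the logarithmic blow-up. The obstruction is that the next-order contributions carry growing powers of $x$ (already $\partial_z\phi_{-\frac12}(0,x)=\OO(x^{5/2})$, with a matching growth in $\partial_z\phi$), so the termwise $z$-derivatives of $F_1$ and $F_2$ fail to be absolutely convergent under \eqref{eq:q_mar2} alone. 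Extracting the clean rate $\OO(k^2\log(-k^2))$ therefore cannot proceed by naive differentiation; it requires splitting the integrals at $x\sim1/|k|$ and combining the oscillatory decay of the Bessel functions for $x\gtrsim1/|k|$ with the resonant structure $\phi(0,x)=\OO(\sqrt x)$ to cancel the growing pieces. This cancellation, rather than any single estimate, is the crux, and it is precisely the threshold behaviour the authors flag as not yet satisfactorily understood.
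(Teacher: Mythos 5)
Everything up to and including the resonance dichotomy coincides with the paper's proof: the paper reads the same $F_1,F_2$ off Corollary \ref{cor:Fintrep}, gets real-valuedness and continuity by dominated convergence, writes $\phi(0,\cdot)=a y_1+b y_2$ via Lemma \ref{lem:phi0atinfty}, and identifies $F_2(0)=b\sqrt{\pi/2}$ and $F_1(0)=\sqrt{2/\pi}\,a$ through Lemma \ref{lem:int=wro}; your variation-of-parameters computation merely makes explicit a step the paper leaves implicit. The genuine gap is \eqref{eq:Fresonant0}, which you diagnose but do not prove, so the lemma is left incomplete. The paper's own argument here is one sentence: since $\phi_{-\frac12}(\cdot,x)$ and $\phi(\cdot,x)$ are entire in $z=k^2$ and $F_2(0)=0$, dominated convergence applied to the difference quotient
\[
\frac{1}{z}\int_0^\infty\bigl(\phi_{-\frac12}(z,x)\phi(z,x)-\phi_{-\frac12}(0,x)\phi(0,x)\bigr)q(x)\,dx
\]
is claimed to give $F_2(k)=\OO(k^2)$, whence multiplying by $\I-\frac1\pi\log(k^2)$ (and treating $F_1(k)-F_1(0)$ in the same way) yields \eqref{eq:Fresonant0}. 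A blind proof should at least reproduce this argument, or pin down exactly which domination it requires; stopping at ``this cannot proceed by naive differentiation'' is not a proof.

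That said, your objection is substantive: $\partial_z\phi_{-\frac12}(0,x)=-\sqrt{\pi x/2}\,x^2/4$, so $\partial_z[\phi_{-\frac12}(z,x)\phi(z,x)]|_{z=0}$ grows like $x^3$ in the resonant case, and no integrable dominating function for the difference quotient is available under \eqref{eq:q_mar2} alone; the paper's one-line appeal to dominated convergence is at best incomplete as a justification of the \emph{rate} $\OO(k^2\log(-k^2))$. The splitting at $x\sim1/|k|$ that you sketch does not need any mysterious cancellation, however: it gives $F_2(k)=\OO(\log^{-2}(1/k))$ and hence $\log(k^2)F_2(k)\to0$, i.e.\ $F(k)\to F_1(0)\neq0$. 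That weaker conclusion is all that is ever used downstream --- Theorem \ref{thm:le1} only needs $\|1/F\|_\infty<\infty$, and Theorem \ref{thm:le2} operates in the non-resonant case $F_2(0)\neq0$ --- so the unestablished rate in \eqref{eq:Fresonant0} is a cosmetic refinement rather than the crux of the low-energy analysis, and it should not be conflated with the authors' remark that the resonant case must be excluded from Theorem \ref{Main}. You should therefore either carry out the splitting to obtain the (sufficient) qualitative statement $F(k)=F_1(0)+o(1)$, or flag the rate in \eqref{eq:Fresonant0} as requiring stronger decay of $q$, rather than leaving the whole final claim unaddressed.
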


\begin{proof}
The first claim follows from the integral representation \eqref{eq:F_rep2} since the corresponding integrals are continuos in $k$ by the dominated convergence theorem.
 Moreover, $\phi(k^2,x)$ and $\theta(k^2,x)$ are real if $k\in\R$ and hence so are $F_1$ and $F_2$. 
  
By Lemma \ref{lem:phi0atinfty}, $\phi(0,x) = ay_1(x) +by_2(x)$, where the asymptotic behavior of $y_1$ and $y_2$ is given by \eqref{eq:y1} and \eqref{eq:y2}, respectively. Combining Lemma \ref{lem:int=wro} with the representation \eqref{eq:F_rep2}, we conclude that $F_2(0)=b\sqrt{\pi/2}\neq 0$ in \eqref{eq:F=F1F2} precisely when $b\neq 0$ and hence the second claim follows. 

Assume now that $F_2(0) = 0$, which is equivalent to the equality $\phi(0,x) = ay_1(x)$ with $a= \sqrt{\pi/2}F_1(0)\neq 0$. 
Noting that both $\phi_{-\frac{1}{2}}(\cdot,x)$ and  $\phi(\cdot,x)$ are analytic for each $x>0$ and applying the dominated convergence theorem once again,
 we conclude that  
\[
\int_0^\infty \phi_{-\frac{1}{2}}(k^2,x)\phi(k^2,x)q(x) dx= \OO(k^2),\quad k\to 0.
\]
This immediately proves \eqref{eq:Fresonant0}.
\end{proof}

\begin{definition}\label{def:resonance}
We shall say that there is a resonance at $0$ if $\phi(0,x) = \OO(\sqrt{x})$ as $x\to \infty$. 
\end{definition}

Let us mention that there is a resonance at $0$ if $q\equiv 0$ since in this case $\phi(0,x) = \phi_{-\frac{1}{2}}(0,x) = \sqrt{\pi x/2}$. 

We finish this section with the following estimate. 

\begin{lemma}\label{lem:Fp}
Assume that $q$ satisfies \eqref{q:hyp} and \eqref{eq:q_mar}. Then $F$ is differentiable for all $k\neq 0$ and 
\[
\abs{F'(k)}\le \frac{C}{\abs{k}}, \quad k\neq 0.
\]
\end{lemma}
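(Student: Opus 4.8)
The plan is to differentiate the first integral representation of Lemma~\ref{lem:b.5} and estimate the result term by term. The key simplification is that the prefactor $\E^{\I\pi/4}k^{-1/2}$ exactly cancels the $\sqrt{k}$ hidden in $f_{-\frac12}(k,x)=\E^{\I\pi/4}\sqrt{\pi x k/2}\,H_0^{(1)}(kx)$; indeed $\E^{\I\pi/4}k^{-1/2}f_{-\frac12}(k,x)=\I\sqrt{\pi x/2}\,H_0^{(1)}(kx)$, so \eqref{eq:intr_F} becomes the genuinely $k^{-1/2}$-free expression
\[
F(k) = 1 + \I\int_0^\infty \sqrt{\tfrac{\pi x}{2}}\,H_0^{(1)}(kx)\,\phi(k^2,x)\,q(x)\,dx .
\]
Writing $h_0(k,x):=\sqrt{\pi x/2}\,H_0^{(1)}(kx)$ and using $\tfrac{d}{dz}H_0^{(1)}=-H_1^{(1)}$ one has $\partial_k h_0(k,x)=-h_1(k,x)$ with $h_1(k,x):=x\sqrt{\pi x/2}\,H_1^{(1)}(kx)$, so differentiating under the integral sign (justified below by domination) gives, for $k>0$,
\[
F'(k)=\I\int_0^\infty\Big[-h_1(k,x)\,\phi(k^2,x)+h_0(k,x)\,\partial_k\phi(k^2,x)\Big]q(x)\,dx .
\]
It suffices to treat $k>0$, since $F(-k)=F(k)^\ast$ by Lemma~\ref{lem:b.5}.

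Next I would collect the pointwise bounds. From \eqref{est:Jostsol} (divided by $\sqrt k$) one gets $\abs{h_0(k,x)}\le C\left(\frac{x}{1+kx}\right)^{1/2}\big(1-\log\frac{kx}{1+kx}\big)$, while the small- and large-argument asymptotics of $H_1^{(1)}$ (whose $1/z$ singularity at the origin is the source of the $1/k$) yield $\abs{h_1(k,x)}\le \frac{C}{k}\sqrt{x}\,\sqrt{1+kx}$. For the regular solution, Lemmas~\ref{lem:b.1} and \ref{lem:phi} give $\abs{\phi(k^2,x)}\le C\left(\frac{x}{1+kx}\right)^{1/2}\Phi(k,x)$, and Lemmas~\ref{lem:part-z} and \ref{lem:about-partial-z} give $\abs{\partial_k\phi(k^2,x)}\le C\,kx\left(\frac{x}{1+kx}\right)^{3/2}\Phi(k,x)$, where
\[
\Phi(k,x):=1+\int_0^x\frac{y}{1+ky}\Big(1+\log\tfrac{x}{y}\Big)\abs{q(y)}\,dy .
\]
Multiplying out, the $\sqrt{1+kx}$ growth of $h_1$ cancels against the $(1+kx)^{-1/2}$ decay of $\phi$, so the first term is bounded by $\frac{C}{k}x\,\Phi(k,x)\abs{q(x)}$; for the second term one uses that, with $u=\frac{kx}{1+kx}\in(0,1)$, the factor $kx\left(\frac{x}{1+kx}\right)^2\big(1-\log\frac{kx}{1+kx}\big)=\frac{x}{k}\,u^2(1-\log u)$ and $u^2(1-\log u)$ is bounded on $(0,1)$, so it is again bounded by $\frac{C}{k}x\,\Phi(k,x)\abs{q(x)}$. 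Hence $\abs{F'(k)}\le \frac{C}{k}\int_0^\infty x\,\Phi(k,x)\abs{q(x)}\,dx$, and these very bounds dominate the integrand uniformly on compact subsets of $\{k>0\}$, which legitimizes the differentiation under the integral (so $F$ is in fact differentiable).

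The remaining point, and the main obstacle, is to show that
\[
\sup_{k>0}\int_0^\infty x\,\Phi(k,x)\,\abs{q(x)}\,dx<\infty .
\]
Since $\frac{y}{1+ky}\le y$, the integrand is monotone in $k$ and the supremum is attained at $k=0$, reducing the claim to finiteness of the double integral $\iint_{0<y<x}xy\big(1+\log\tfrac{x}{y}\big)\abs{q(x)}\abs{q(y)}\,dx\,dy$. Splitting $\log\tfrac{x}{y}\le\log(1+x)+\log_+\tfrac1y$ makes this factor into a product of one-dimensional integrals, and finiteness then follows from $\int_0^\infty x\abs{q}\,dx<\infty$, from $\int_1^\infty x\log(1+x)\abs{q}\,dx<\infty$ in \eqref{eq:q_mar}, and from $\int_0^1 x\,(1+\abs{\log x})\abs{q}\,dx<\infty$ in \eqref{q:hyp}. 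This is exactly where both weights in the hypotheses are consumed: the delicacy is that $\phi(0,x)$ may itself grow like $\sqrt{x}\log x$ as $x\to\infty$, so $\Phi$ is genuinely unbounded pointwise, and only the monotonicity reduction to $k=0$ together with the logarithmic weights keeps the weighted integral finite. Combining the two displayed estimates yields $\abs{F'(k)}\le C/k$ for $k>0$, hence $\abs{F'(k)}\le C/\abs{k}$ for all $k\neq0$.
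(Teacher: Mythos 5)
Your proposal is correct and follows essentially the same route as the paper: differentiate the integral representation \eqref{eq:intr_F}, bound $\partial_k\ti f_{-\frac12}$ via $H_1^{(1)}$ exactly as in \eqref{eq:partial-z-weyl-sol-free}, combine with the estimates \eqref{estphi} and \eqref{eq:partial-z-diff-phi}, and control the resulting weighted integral by splitting the logarithm (your $\log(x/y)\le\log(1+x)+\log_+(1/y)$ is the same device as the paper's inequality \eqref{intestnew}). The only differences are cosmetic — you make the justification of differentiation under the integral sign and the reduction to $k>0$ explicit, which the paper leaves implicit.
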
 

\begin{proof}
Setting
\[
\ti{f}_{-\frac{1}{2}}(k,x) :=\frac{f_{-\frac{1}{2}}(k,x)}{f_{-\frac{1}{2}}(k)}= \E^{\I\frac{\pi}{4}}k^{-\frac{1}{2}} f_{-\frac{1}{2}}(k,x),
\]
we find that its derivative is given by (cf. \dlmf{10.6.3})
\[
\partial_k \ti{f}_{-\frac12}(k,x) = -\I  x\sqrt{\frac{\pi x}{2}} H^{(1)}_{1}(k x).
\]
Similar to \eqref{est:Jostsol} we obtain the estimate
\be 
\label{eq:partial-z-weyl-sol-free}
\abs{\partial_k \ti{f}_{-\frac12}(k,x)}
	\le C \frac{\sqrt{x(1+|k|x)}}{|k|}\E^{-\abs{\im\, k} x}
\ee
which holds for all $x>0$. Using \eqref{eq:intr_F}, we get
\[
F'(k) 
= \int_0^\infty \big(\partial_k \ti{f}_{-\frac12}(k,x)\phi(k^2,x) 
  + \ti{f}_{-\frac12}(k,x)\partial_k\phi(k^2,x) \big)q(x)dx .  
\]
The integral converges absolutely for all $k\neq 0$. Indeed, we have 
\be \label{intestnew}
1+ \log\Big(\frac{x}{y}\Big) \le (1+|\log(x)|)(1+|\log(y)|), \quad 0 < y \le x.
\ee
By \eqref{eq:partial-z-diff-phi}, \eqref{est:Jostsol} and also \eqref{intestnew}, we obtain
\begin{align*}
\left|\int_0^\infty \ti{f}_{-\frac12}(k,x)\partial_k\phi(k^2,x) q(x)dx \right| & \le C  \int_0^\infty  \sqrt{|k|}x \left( \frac{x}{1+|k|x} \right)^{\frac{3}{2}}(1+|\log(x)|)|q(x)|dx \\
& \le   \frac{C}{|k|} \int_0^\infty\ x(1+|\log(x)|) |q(x)|dx. 
\end{align*}
Using \eqref{estphi} and \eqref{eq:partial-z-weyl-sol-free} (again in combination with \eqref{intestnew}), 
we get the following estimates for the first summand:
\begin{align*}
\left|\int_0^\infty\ \partial_k \ti{f}_{-\frac12}(k,x) \phi(k^2,x) q(x)dx\right|
 \le \frac{C}{|k|} \int_0^\infty x(1+|\log(x)|)|q(x)|dx. 
\end{align*}
Now the claim follows.
\end{proof}

\section{Dispersive decay}
\label{ll-sec}

In this section we prove the dispersive decay estimate \eqref{full} for 
the Schr\"odinger equation \eqref{Schr}. In order to do this, we divide
the analysis into a low and high energy regimes. In the analysis of both regimes we make use of variants of the 
van der Corput lemma (see Appendix \ref{sec:vdCorput}), combined with a Born series approach
for the high energy regime suggested in \cite{GS} and adapted to our setting in \cite{ktt}. 

\subsection{The low energy part}
\label{ll-sec-low}
 For the low energy
regime, it is convenient to use the following well-known
representation of the integral kernel of $\E^{-\I tH}P_{c}(H)$,
\begin{align}
[\E^{-\I tH}P_{c}(H)](x,y)
	&= \frac{2}{\pi} \int_{-\infty}^{\infty}
       \E^{-\I t k^2}\,\phi(k^2,x) \phi(k^2,y) \im m(k^2) k \,dk\nn
       \\
	&=\frac{2}{\pi} \int_{-\infty}^{\infty}
	  \E^{-\I t k^2}\,\frac{\phi(k^2,x) \phi(k^2,y)k^2}{|f(k)|^2}\,dk\label{integr}
	  \\
	&=\frac{2}{\pi} \int_{-\infty}^{\infty}
	  \E^{-\I t k^2}\,\frac{\tilde{\phi}(k,x)\tilde{\phi}(k,y)}
	  {|F(k)|^2} \,dk,\nn
\end{align}
where the integral is to be understood as an improper integral. In fact, adding an additional
energy cut-off (which is all we will need below) the formula is immediate from the spectral
transformation \cite[\S3]{kst2} and the general case can then be established taking limits
(see \cite{ktt} for further details).

In the last equality we have used  
\begin{equation}\label{eq:tiphif}
\tilde{\phi}(k,x)
	:= |k|^{\frac12} \phi(k^2,x),\quad k\in \R. 
\end{equation}
Note that 
\begin{align}\label{eq:esttiphi}
|\ti{\phi}(k,x)| &\le C \left(\frac{|k|x}{1+|k|x}\right)^{\frac12}\E^{|\im k|x}\left(1+\int_0^x \left(1+ \log\Big(\frac{x}{y}\Big)\right)\frac{y|q(y)|}{1+|k|y} dy\right),\\
\label{eq:esttiphik}
|\partial_k \ti{\phi}(k,x)| &\le Cx \left(\frac{|k|x}{1+|k|x}\right)^{-\frac12}\E^{|\im k|x}\left(1+\int_0^x  \left(1+ \log\Big(\frac{x}{y}\Big)\right)\frac{y|q(y)|}{1+|k|y}dy\right),
\end{align}
which follow  from \eqref{estphil}, \eqref{estphi} and the equality
\[
\partial_k \ti{\phi}(k,x) = \frac12{\rm sgn}(k)|k|^{-\frac12} \phi(k^2,x) + |k|^{\frac12}\partial_k \phi(k^2,x)
\]
together with \eqref{eq:partial-z-phil}, \eqref{eq:partial-z-diff-phi}.

We begin with the following estimate.

\begin{theorem}\label{thm:le1}
Assume \eqref{q:hyp} and \eqref{eq:q_mar2}. Let $\chi\in C^\infty_c(\R)$ with ${\rm supp}(\chi)\subset (-k_0,k_0)$. Then
\be\label{eq:low-energy-1}
\big|[\E^{-\I tH}\chi(H)P_{c}(H) ](x,y)\big| \le {C\sqrt{xy}}{|t|^{-\frac{1}{2}}} 
\ee
for all $x,y\le 1$.
\end{theorem}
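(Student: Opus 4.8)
The plan is to write the cut-off propagator as an oscillatory integral with quadratic phase and to apply the van der Corput lemma from Appendix~\ref{sec:vdCorput}. Inserting the cut-off into the last line of \eqref{integr} gives
\[
[\E^{-\I tH}\chi(H)P_c(H)](x,y) = \frac{2}{\pi}\int_{-\infty}^\infty \E^{-\I tk^2}A(k)\,dk,\qquad A(k):=\chi(k^2)\frac{\ti\phi(k,x)\ti\phi(k,y)}{|F(k)|^2}.
\]
The amplitude $A$ is even in $k$ (since $\ti\phi(\cdot,x)$, $|F|^2$ and $\chi(\cdot^2)$ all are) and compactly supported. As the phase $\psi(k)=k^2$ satisfies $\psi''\equiv 2$, the second-derivative van der Corput estimate produces the factor $|t|^{-1/2}$ with a constant controlled by $\|A\|_\infty+\|A'\|_{L^1}$, so the whole theorem reduces to showing
\[
\|A\|_\infty \le C\sqrt{xy},\qquad \|A'\|_{L^1(\R)}\le C\sqrt{xy},
\]
uniformly for $x,y\le 1$.

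First I would record a uniform lower bound for $|F|$ on $\supp(A)$. By Lemma~\ref{lem:Fprop}, $F$ is continuous and non-vanishing on $\R\setminus\{0\}$, while near $k=0$ Lemma~\ref{lem:Fat0} shows that either $F$ extends continuously with $F(0)=F_1(0)\neq0$ (resonant case) or $|F(k)|\to\infty$ (non-resonant case); in both cases $\inf_{0<|k|\le k_0}|F(k)|=:c_0>0$, whence $|F(k)|^{-2}\le c_0^{-2}$ on the support. For the sup-norm I would use \eqref{eq:esttiphi}: for real $k$ and $y\le x\le1$ the logarithmic integral there is dominated by $\int_0^1(1-\log y)y|q(y)|\,dy<\infty$ by \eqref{q:hyp}, so $|\ti\phi(k,x)|\le C(|k|x)^{1/2}$. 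Since $|k|$ is bounded on $\supp(A)$, this gives $|A(k)|\le Cc_0^{-2}|k|\sqrt{xy}\le C'\sqrt{xy}$.

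The heart of the argument is the derivative bound. Writing
\[
A'(k) = 2k\chi'(k^2)\frac{\ti\phi(k,x)\ti\phi(k,y)}{|F|^2} + \chi(k^2)\frac{\partial_k\big(\ti\phi(k,x)\ti\phi(k,y)\big)}{|F|^2} + \chi(k^2)\,\ti\phi(k,x)\ti\phi(k,y)\,\partial_k\frac{1}{|F|^2},
\]
the first term is bounded exactly as $A$ itself. For the second term the key cancellation is that, combining \eqref{eq:esttiphik} with \eqref{eq:esttiphi}, one has $|\partial_k\ti\phi(k,x)|\,|\ti\phi(k,y)|\le Cx^{1/2}|k|^{-1/2}\cdot(|k|y)^{1/2}=C\sqrt{xy}$, so the apparent singularity at $k=0$ disappears. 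For the third term I would use $\partial_k|F|^{-2}=-2|F|^{-4}\re(F^\ast F')$ together with $|F'(k)|\le C/|k|$ from Lemma~\ref{lem:Fp} to get $\big|\partial_k|F|^{-2}\big|\le C/(|k|\,|F|^3)$; since $|\ti\phi(k,x)\ti\phi(k,y)|\le C|k|\sqrt{xy}$, the factors of $|k|$ cancel and this term is bounded by $C\sqrt{xy}/|F|^3\le Cc_0^{-3}\sqrt{xy}$. All three pieces are supported in a fixed compact set and bounded by $C\sqrt{xy}$, so $\|A'\|_{L^1}\le C\sqrt{xy}$, and the van der Corput lemma closes the argument.

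I expect the main obstacle to be the third term, i.e.\ controlling $\partial_k|F|^{-2}$ near $k=0$: the derivative $F'$ genuinely blows up like $1/|k|$ there, and the estimate only closes because this singularity is precisely compensated by the linear vanishing $\ti\phi(k,x)\ti\phi(k,y)=\OO(|k|\sqrt{xy})$ of the numerator at the stationary point. Securing the uniform lower bound $c_0>0$ — in particular verifying via Lemma~\ref{lem:Fat0} that $F$ does not vanish as $k\to0$ in either the resonant or the non-resonant regime — is the other delicate point; once both are in hand, the reduction above is routine.
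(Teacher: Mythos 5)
Your proposal is correct and follows essentially the same route as the paper: the same reduction of the cut-off kernel to an oscillatory integral with amplitude $A(k)=\chi(k^2)\tilde\phi(k,x)\tilde\phi(k,y)/|F(k)|^2$, the same uniform bound $\|1/F\|_\infty<\infty$ from Lemmas~\ref{lem:Fprop} and~\ref{lem:Fat0}, the same cancellation of the $1/|k|$ singularities of $\partial_k\tilde\phi$ and $F'$ against the $\OO(|k|\sqrt{xy})$ vanishing of $\tilde\phi(k,x)\tilde\phi(k,y)$, and the same appeal to the van der Corput lemma via $\|A\|_\infty+\|A'\|_{L^1}$. The only cosmetic difference is that the paper bounds $\|A'\|_1$ by $\|\chi'\|_1\|A_0\|_\infty+\|\chi\|_1\|A_0'\|_\infty$ rather than expanding the product rule explicitly.
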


\begin{proof}
We want to apply the van der Corput Lemma \ref{lem:vC2} to the integral 
\[
I(t,x,y) := [\E^{-\I tH}\chi(H)P_{c}(H)](x,y) =\frac{2}{\pi} \int_{-\infty}^{\infty}
	  \E^{-\I t k^2}\chi(k^2)\,\frac{\tilde{\phi}(k,x)\tilde{\phi}(k,y)}
	  {|F(k)|^2} \,dk.
\]
Denote
\[
A(k) = \chi(k^2) A_0(k), \qquad A_0(k) 
=\frac{\tilde{\phi}(k,x)\tilde{\phi}(k,y)}{|F(k)|^2}.
\]
Note that 
\[
\|A\|_\infty \le \|\chi\|_\infty\|A_0\|_\infty,\qquad \|A'\|_1 \le \|\chi'\|_1\|A_0\|_\infty + \|\chi\|_1\|A_0'\|_\infty.
\]
By Lemma \ref{lem:Fprop}, $F(k)\ne 0$ for all $k\in\R\setminus\{0\}$. Moreover, combining \eqref{eq:Fatinfty} with Lemma \ref{lem:Fat0}, we conclude that $\|1/F\|_\infty<\infty$. Using \eqref{eq:esttiphi} and noting that $\log(x/y)\le \log(1/y)$ for all $0<y\le x\le 1$, we get
\be\label{eq:esttiphi<1}
|\ti{\phi}(k,x)| \le C \left(\frac{|k|x}{1+|k|x}\right)^{\frac12}\E^{|\im k|x},\quad x\in(0,1].
\ee
Therefore, 
\be\label{eq:estAinfty}
\sup_{k\in[- k_0,k_0]} |A_0(k)| \le C\,\|1/F\|^2_\infty |k_0|\sqrt{xy}, 
\ee
which holds for all $x,y\in (0,1]$ with some uniform constant $C>0$. 

Next, we get
\[
A_0'(k) = \frac{\partial_k\tilde{\phi}(k,x)\tilde{\phi}(k,y) + \tilde{\phi}(k,x)\partial_k\tilde{\phi}(k,y)}
		{\abs{F(k)}^2}
		-
		 A_0(k) \re\frac{F'(k)}{F(k)}.
\]
To consider the second term, we infer from~ \eqref{eq:esttiphi<1}, Lemma~\ref{lem:Fat0} and Lemma~\ref{lem:Fp} that 
\[
 \abs{A_0(k) \re\frac{F'(k)}{F(k)} }\le \frac{ \abss{\tilde{\phi}(k,x) \tilde{\phi}(k,y)}  }{|F(k)|^2} \abs{\frac{F'(k)}{F(k)}} \le C \sqrt{xy}.
\]
The estimate for the first term follows from \eqref{eq:esttiphi<1} and \eqref{eq:esttiphik} since 
\begin{align*}
&\left|\partial_k\tilde{\phi}(k,x)\tilde{\phi}(k,y) + \tilde{\phi}(k,x)\partial_k\tilde{\phi}(k,y)\right| \\
&\quad\quad \le C \left(\frac{|k|x}{1+|k|x}\right)^{\frac12} \left(\frac{|k|y}{1+|k|y}\right)^{\frac12} \left( \frac{1+|k|x}{|k|} + \frac{1+|k|y}{|k|}\right)\\
&\quad\quad \le C \sqrt{xy}  \frac{1+|k|x + 1+ |k|y}{\sqrt{(1+|k|x)(1+|k|y)}} \le 2C(1+|k|) \sqrt{xy},\quad x,y\in (0,1].
\end{align*}
The claim now follows by applying the classical van der Corput Lemma (see \cite[page 334]{St}) or by noting that $A\in \cW_0(\R)$ in view of Lemma \ref{lem:a.3} and then it remains to apply Lemma \ref{lem:vC2}. 
\end{proof}

\begin{theorem}\label{thm:le2}
Assume
\be
\int_0^1 |q(x)| dx <\infty \quad \text{and} \quad  \int_1^\infty x\log^2(1+x) |q(x)| dx <\infty.
\ee
Let also $\chi\in C^\infty_c(\R)$ with ${\rm supp}(\chi)\subset (-k_0,k_0)$. 
If $\phi(0,x)/\sqrt{x}$ is unbounded near  $\infty$, then
\be\label{eq:3.11}
\big|[\E^{-\I tH}\chi(H)P_{c}(H) ](x,y)\big| \le {C}{|t|^{-\frac{1}{2}}},
\ee
whenever $\max(x,y)\ge 1$.
\end{theorem}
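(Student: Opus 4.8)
The plan is to run the van der Corput scheme from the proof of Theorem~\ref{thm:le1}, applied to
\[
I(t,x,y)=\frac{2}{\pi}\int_{-\infty}^{\infty}\E^{-\I tk^2}\chi(k^2)A_0(k)\,dk,\qquad A_0(k)=\frac{\ti\phi(k,x)\ti\phi(k,y)}{\abs{F(k)}^2},
\]
and to reduce everything, via the Wiener algebra machinery of Appendix~\ref{sec:vdCorput}, to showing that $\chi(\cdot^2)A_0\in\cW_0(\R)$ with norm bounded \emph{uniformly} over all $x,y$ with $\max(x,y)\ge1$; Lemma~\ref{lem:vC2} then yields \eqref{eq:3.11}. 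The route used for $x,y\le1$ is unavailable here: when $x$ or $y$ is large, $\ti\phi(k,x)\ti\phi(k,y)$ oscillates with frequency $\sim x\pm y$, so $\norm{A_0'}_{1}$ grows like $\max(x,y)$ and the direct bound on $A_0'$ is useless. The feature I would exploit instead is that $\cW_0$ is a Banach algebra whose norm is invariant under modulation $g\mapsto\E^{\I k\xi}g$, so these oscillations become harmless once they are made explicit.

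First I would rewrite $A_0$ to expose the oscillatory factors. Using \eqref{eq:phif} with $f(-k,\cdot)=f(k,\cdot)^\ast$ and $f(-k)=f(k)^\ast$ on the real axis gives $\phi(k^2,x)=k^{-1}\im\!\big(f(k)^\ast f(k,x)\big)$, hence
\[
A_0(k)=u(k,x)\,u(k,y),\qquad u(k,x):=\im\!\Big(\frac{f(k)^\ast}{\abs{f(k)}}\,f(k,x)\Big)=\frac{k\,\phi(k^2,x)}{\abs{f(k)}}.
\]
The bound $\abs{u(k,x)}\le\abs{f(k,x)}\le C$, uniform in $x$, follows from the Marchenko representation (Lemma~\ref{lem:to}), since \eqref{eq:MAest} and the moment hypotheses give $\int_x^\infty\abs{K(x,y)}\,dy\le C$; this already yields $\norm{A_0}_\infty\le C$ uniformly, so all the difficulty sits in the Wiener norm. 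Writing $u(k,x)=\tfrac{1}{2\I}\big(\tfrac{f^\ast}{\abs f}\E^{\I kx}m(k,x)-\tfrac{f}{\abs f}\E^{-\I kx}m(k,x)^\ast\big)$ with $m(k,x):=\E^{-\I kx}f(k,x)$, modulation invariance removes the factors $\E^{\pm\I kx}$, and the algebra property of $\cW_0$ reduces the task to the two uniform estimates $\norm{\ti\chi\,\tfrac{f^\ast}{\abs f}}_{\cW_0}\le C$ and $\sup_x\norm{\ti\chi\,m(\cdot,x)}_{\cW_0}\le C$, where $\ti\chi$ is a fixed cut-off equal to $1$ on $\supp\chi(\cdot^2)$. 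For $m(\cdot,x)$ the point is that the $x$-growth of $\partial_k f(k,x)$ cancels against $\I x f(k,x)$ because $f(k,x)\sim\E^{\I kx}$; inserting the Marchenko kernel and rescaling $k\mapsto kx$ then shows the cusp-type contribution near $k=0$ is integrable and uniformly bounded. This part I expect to be routine but technical.

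The main obstacle is the phase factor $\tfrac{f(k)^\ast}{\abs{f(k)}}=\E^{\I\pi/4}\tfrac{F(k)^\ast}{\abs{F(k)}}$ (for $k>0$) near $k=0$, and this is exactly where non-resonance is consumed. By Lemma~\ref{lem:Fat0}, unboundedness of $\phi(0,x)/\sqrt x$ means $F_2(0)\ne0$; since $\im F(k)=F_2(k)\to F_2(0)\ne0$ stays bounded while $\re F(k)\sim-\pi^{-1}\log(k^2)F_2(0)\to\infty$, one has $\abs{F(k)}\gtrsim\abs{\log(k^2)}$ and $\arg F(k)\to0$. Writing $\im\tfrac{F'}{F}=\abs{F}^{-2}\big(F_2'\,\re F-(\re F)'\,F_2\big)$ and using $\abs{F'}\le C/\abs{k}$ (Lemma~\ref{lem:Fp}) together with the control of $F_2'$ near $0$ afforded by \eqref{eq:q_mar2} and \eqref{eq:F_rep2}, the term $(\re F)'F_2\sim-2\pi^{-1}k^{-1}F_2(0)^2$ dominates and I would obtain
\[
\Big|\frac{d}{dk}\arg F(k)\Big|=\Big|\im\frac{F'(k)}{F(k)}\Big|\le\frac{C}{\abs{k}\log^2\abs{k}},\quad k\to0 .
\]
This bound is integrable near $0$, which is what places $\tfrac{F^\ast}{\abs F}$ in $\cW_0$ (via Lemma~\ref{lem:a.3}) and simultaneously suppresses the coefficient $\phi(0,x)\sim\sqrt x\log x$ of the $\abs{k}^{1/2}$-cusp of $u(\cdot,x)$ just enough to keep $\norm{\ti\chi\,u(\cdot,x)}_{\cW_0}$ uniformly bounded. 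In the resonant case $\abs{F(0)}<\infty$, the logarithmic gain disappears and this suppression fails, which is precisely why the hypothesis cannot be dropped. Establishing this small-$k$ estimate, and matching it to the oscillatory regime $\abs{k}\gtrsim1/\max(x,y)$, is the principal technical point.
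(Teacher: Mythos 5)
Your reduction breaks at the factor $\ti\chi\,f(k)^\ast/\abs{f(k)}$: in the non-resonant case this unimodular function is \emph{not} in the Wiener algebra near $k=0$, and neither appendix lemma can place it there. Indeed, for $k>0$ one has $F(k)\sim -\tfrac1\pi\log(k^2)F_2(0)$, so $\arg F(k)=\arctan\bigl(F_2(k)/(F_1(k)-\tfrac1\pi\log(k^2)F_2(k))\bigr)$ deviates from its limit by $\sim \tfrac{\pi}{2}\log(1/k)^{-1}$, and $F(-k)=F(k)^\ast$ forces the odd extension; hence $\im\bigl(F(k)^\ast/\abs{F(k)}\bigr)$ behaves like $c\,\mathrm{sgn}(k)/\log(1/\abs{k})$ with $c\neq0$ near $0$. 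A localized copy of $\mathrm{sgn}(k)/\log(1/\abs{k})$ is a classical non-member of the Wiener algebra: up to an absolutely convergent part it is the sum of the series $\sum_{n\ge2}\sin(nk)/(n\log n)$ (Zygmund's asymptotics for sine series with slowly varying coefficients), whose coefficients are not absolutely summable. Your own sufficient conditions confirm the obstruction: the bound $\abs{\tfrac{d}{dk}\arg F(k)}\asymp \abs{k}^{-1}\log^{-2}\abs{k}$ is sharp, lies in $L^1$ near $0$ but in no $L^p$ with $p>1$, so Lemma \ref{lem:a.3} does not apply (contrary to your parenthetical claim); and $\int_0^1\log(2/k)\,k^{-1}\log^{-2}(1/k)\,dk=\infty$, so Lemma \ref{lem:litr} fails as well. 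Since your scheme bounds $\norms{\ti\chi\, u(\cdot,x)}_{\cW}$ by a product of Wiener norms of the factors, the failure of one factor is fatal; nor does the phase cancel in $u(k,x)u(k,y)$, since expanding the imaginary parts leaves a term carrying $(f(k)^\ast)^2/\abs{f(k)}^2$. (Separately, your small-$k$ estimate for $\im(F'/F)$ needs bounds on $F_1'$ and $F_2'$ individually, which the paper never establishes; Lemma \ref{lem:Fp} only controls $\abs{F'}$.)

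The paper avoids the phase entirely by never splitting $\phi$ into Jost solutions: it writes $\ti{\phi}(k,x)=(I+B_x)\ti{\phi}_{-\frac12}(k,x)$ for $x\le1$ and $\ti{\phi}(k,y)=(I+K_y)\ti{\phi}_{-\frac12}(k,y)$ for $y\ge1$ (Lemmas \ref{lem:toGL} and \ref{lem:to}), pulls $I+B_x$ and $I+K_y$ outside the $k$-integral by Fubini, and invokes their $L^\infty$-boundedness (Corollaries \ref{cor:Best} and \ref{cor:Kest}). This reduces everything to the free kernel $J(\abs{k}x)J(\abs{k}y)$, whose Wiener norm is independent of $x,y$ by scaling, multiplied by $\chi(k^2)\abs{F(k)}^{-2}$. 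Only the \emph{modulus} of $F$ enters; it is even in $k$, and its derivative gains an extra logarithm, $\abs{k}^{-1}\abs{\log k}^{-3}$, which is exactly what makes Lemma \ref{lem:litr} applicable. If you want to rescue your route you must keep the combination $k\phi(k^2,x)=\im\bigl(f(k)^\ast f(k,x)\bigr)$ intact rather than peel off $f(k)^\ast/\abs{f(k)}$ --- which in effect leads back to the paper's transformation-operator argument.
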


\begin{proof}
Assume that $ 0<x\le 1\le y$. 
We proceed as in the previous proof but use Lemma~\ref{lem:toGL} and Lemma~\ref{lem:to} to write
\[
A(k) = \chi(k^2) \frac{(I+B_x) \ti{\phi}_{-\frac12}(k,x) \cdot (I+K_y) \ti{\phi}_{-\frac12}(k,y)}{|F(k)|^2},\quad k\neq 0.
\]
Indeed, for all $k\in \R\setminus\{0\}$,  $\phi(k^2,\cdot)$ admits the representation \eqref{eq:phif}. Therefore, by Lemma~\ref{lem:to}, $\ti{\phi}(k,y) = (I+K_y)\ti{\phi}_{-\frac12}(k,y)$ for all $k\in \R\setminus\{0\}$. 
 
By symmetry $A(k)=A(-k)$ and hence our integral reads
\[
I(t,x,y)=\frac{4}{\pi} \int_0^\infty \E^{-\I t k^2} A(k) dk.
\]
Let us show that the individual parts of $A(k)$ coincide with a function
which is the Fourier transform of a finite measure. Clearly, we can
redefine $A(k)$ for $k<0$. To this end note that $\ti{\phi}_{-\frac12}(k^2,x)= J(|k| x)$, where
$J(r) = \sqrt{r} J_{0}(r)$. Note that $J(r)\sim \sqrt{r}$ as $r\to 0$ and $J(r)= \sqrt{\frac{2}{\pi}} \cos(r-\frac{\pi }{4}) +O(r^{-1})$
as $r\to+\infty$ (see \eqref{eq:asymp-J-infty}). Moreover, $J'(r)\sim \frac{1}{2\sqrt{r}}$ as $r\to 0$ and $J'(r)= \sqrt{\frac{2}{\pi}} \cos(r+\frac{\pi }{4}) +O(r^{-1})$ as $r\to+\infty$ (see \eqref{eq:a11}).  Moreover, we can define $J(r)$ for $r<0$ such that it is locally in $H^1$ and $J(r)=\sqrt{\frac{2}{\pi}} \cos(r-\frac{\pi }{4})$
for $r<-1$. By construction we then have $\ti{J}\in L^2(\R)$ and $\ti{J}\in L^p(\R)$ for all $p\in(1,2)$. By Lemma \ref{lem:a.3}, $\ti{J}\in \cW_0$ and hence $\ti{J}$ is the Fourier transform of an integrable
function. Moreover, $\cos(r-\frac{\pi }{4})$ is the Fourier transform of the sum of two Dirac delta measures and so
$J$ is the Fourier transform of a finite measure.
By scaling, the total variation of the measures corresponding to $J(k x)$ is independent of $x$.

Let us show that $\chi(k^2) |F(k)|^{-2}$ belongs to the Wiener algebra $\cW_0(\R)$. As in Lemma \ref{lem:litr}, we define the functions $f_0$ and $f_1$. Since $\phi(0,x)/\sqrt{x}$ is unbounded near  $\infty$, by Lemma \ref{lem:Fat0} we conclude that $F(k) = \log(k^2)(c+o(1))$ as $k\to 0$ with some $c\neq 0$. Hence Lemma \ref{lem:Fp} yields
\[
\left| \frac{d}{dk }\frac{1}{|F(k)|^2}\right| = \left|-\frac{1}{|F(k)|^2} 2\re\left(\frac{F'(k)}{F(k)^\ast}\right)\right| \le 2\frac{|F'(k)|}{|F(k)|^3}\le  \frac{C}{|k| |\log(k)|^3}
\]
for $k$ near zero, which implies that 
\[
f_1(k)\le C\frac{1}{k \log^3(2/k)},\quad k\in(0,1).
\]
 Therefore, we get
\[
 \int_0^1\log\big(2/k\big)f_1(k)dk\le  C\int_0^1\frac{dk}{k \log^2(2/k)} = C\int_0^{1/2} \frac{dk}{k\log^2(k)} = \frac{C}{\log 2}<\infty.
\]
Noting that the second condition in \eqref{eq:ff01} is satisfied since $\chi$ has compact support and hence so are $f_0$ and $f_1$. Therefore Lemma \ref{lem:litr} implies that $\chi(k^2) |F(k)|^{-2}$ belongs to the Wiener algebra $\cW_0(\R)$.
 
Lemma~\ref{lem:vC2} then shows
\[
|\ti{I}(t,x,y)| \le \frac{C}{\sqrt{t}}, \qquad \ti{I}(t,x,y):=\frac{4}{\pi} \int_0^\infty \E^{-\I t k^2} \chi(k^2) \frac{\ti{\phi}_{-\frac12}(k,x) \ti{\phi}_{-\frac12}(k,y)}{|F(k)|^2} dk.
\]
But by Fubini we have $I(t,x,y)= (1+B_x)(1+K_y)\ti{I}(t,x,y)$ and the claim follows since both $B\colon L^\infty((0,1))\to L^\infty((0,1))$ and $K\colon L^\infty((1,\infty)) \to L^\infty((1,\infty))$ are
bounded in view of Corollary \ref{cor:Best} and Corollary \ref{cor:Kest}, respectively.

By symmetry, we immediately obtain the same estimate if $0<y\le 1\le x $. The case $\min(x,y)\ge 1$ can be proved analogously, we only need to write
\[
A(k) = \chi(k^2) \frac{(I+K_x) \ti{\phi}_{-\frac12}(k,x) \cdot (I+K_y) \ti{\phi}_{-\frac12}(k,y)}{|F(k)|^2},\quad k\neq 0.\qedhere
\]
\end{proof}


\subsection{The high energy part}
For the analysis of the high energy regime we use the following 
---also well-known--- alternative representation:

\begin{align}
\E^{-\I tH}P_{c}(H)
	&=\frac 1{2\pi \I}\int_{0}^{\infty}\E^{-\I t\omega}
	  \left[\cRH(\omega+\I 0)-\cRH(\omega-\I 0)\right] d\omega\nn
	  \\
	&=\frac 1{\pi \I}\int_{-\infty}^{\infty}\E^{-\I t k^2}
	  \cRH(k^2+\I 0)\,k\,dk,\label{PP}	 
\end{align}
where $\cRH(\omega)=(H-\omega)^{-1}$ is the resolvent of the 
Schr\"odinger operator $H$ and the limit is understood in the strong 
sense (see, e.g., \cite{tschroe}). We recall that for $k\in\R\setminus\{0\}$ the Green's function is given by
\be\label{eq:GreenH}
[\cRH(k^2\pm \I 0)](x,y) = [\cRH(k^2\pm \I 0)](y,x)
	= \phi(k^2,x) \frac{f(\pm k,y)}{f(\pm k)},\quad x\le y.
\ee

Fix $k_0 > 0$ and let $\chi\colon\R\to [0,\infty)$ be a $C^\infty$ function such that
\be\label{defchi0}
\chi(k^2) = \begin{cases}
0, & \abs{k}< 2k_0,\\
1, & \abs{k}> 3k_0.
\end{cases}
\ee 
The purpose of this section is to prove the following estimate.

\begin{theorem}\label{thm:he}
Suppose $q\in L^1(\R_+)$ satisfies \eqref{eq:q_mar}. Then
\[
\big|[\E^{-\I tH}\chi(H)P_{c}(H) ](x,y)\big| \le {C}{|t|^{-\frac{1}{2}}}.
\]
\end{theorem}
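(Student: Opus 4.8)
The plan is to reduce the high energy estimate to a van der Corput bound for an oscillatory integral, following the Born series strategy of \cite{GS} as adapted in \cite{ktt}. Starting from the representation \eqref{PP} together with the Green's function \eqref{eq:GreenH}, the kernel of $\E^{-\I tH}\chi(H)P_c(H)$ can be written (for $x\le y$) as an integral over $k\in\R$ of $\E^{-\I tk^2}\chi(k^2)\,\phi(k^2,x)\,f(k,y)/f(k)\,k\,dk$. The idea is to expand the Jost solution $f(k,y)$ and the factor $1/f(k)$ in their Born series in powers of $q$, so that each term becomes an explicit oscillatory integral whose phase is linear in $k$ after extracting the factors $\E^{\I k(y-x)}$ coming from the free Jost solution \eqref{eq:jost0} and the free regular solution. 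Since $\chi$ vanishes near $k=0$, all the low energy difficulties (and in particular the delicate behavior of $F$ at $0$) are completely avoided, and we may work with $|k|$ bounded away from zero throughout.

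First I would record the necessary building blocks on the support of $\chi$, i.e. for $|k|\ge 2k_0$. On this region \eqref{eq:JFasymp} gives $|f(k)|\ge c\sqrt{|k|}$, so $1/f(k)$ is well controlled; moreover $F(k)=f(k)/f_{-\frac12}(k)$ satisfies $|F(k)|\to 1$ by \eqref{eq:Fatinfty} and $|F'(k)|\le C/|k|$ by Lemma~\ref{lem:Fp}, which yields good bounds on $1/f(k)$ and its derivative. For the solutions themselves I would use the free quantities and the perturbative estimates: \eqref{est:Jostsol} for $f_{-\frac12}$, \eqref{estpsi} for $f-f_{-\frac12}$, together with \eqref{estphil} and \eqref{estphi} for $\phi$, all of which decay in $|k|$ on the support of $\chi$ and carry the exponential factors $\E^{\pm|\im k|x}$ needed to make the $x,y$ integrations in the Born series converge under the Marchenko hypothesis \eqref{eq:q_mar}.

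Next I would organize the Born series. Writing $f(k,y)/f(k)=\ti f_{-\frac12}(k,y)/F(k)+\text{(correction)}$ and similarly expanding $\phi$, the leading term reproduces (a piece of) the free propagator kernel, for which the $k^2$-phase oscillatory integral is handled by the van der Corput Lemma~\ref{lem:vC2}, giving the $|t|^{-1/2}$ decay; this is exactly the mechanism used for $q\equiv0$ in \cite{kt}. For each higher order term one extracts the oscillatory factor $\E^{\I k(y-x+\sum\pm s_j)}$, applies the Wiener algebra machinery of Appendix~\ref{sec:vdCorput} (Lemma~\ref{lem:a.3} to see the amplitudes lie in $\cW_0$, so that the relevant functions are Fourier transforms of finite measures), and then applies Lemma~\ref{lem:vC2} uniformly. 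The key point is that the $n$-th term carries a factor $C^n/n!$ times an $n$-fold integral of $q$, exactly as in \eqref{eq:phi_n}, \eqref{estpsi}, so summing over $n$ produces a convergent series with a bound independent of $x,y$.

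The main obstacle I anticipate is controlling the high energy oscillatory integrals uniformly in the shift parameter $y-x+\sum\pm s_j$ entering each Born term, so that after summing the series the constant $C$ is genuinely independent of $x$ and $y$; this is where the translation invariance of the total variation of the underlying measures (as exploited in the proof of Theorem~\ref{thm:le2}) and the uniform-in-shift version of van der Corput in Lemma~\ref{lem:vC2} do the essential work. A secondary technical point is verifying that the amplitudes $\chi(k^2)/F(k)$ and the solution factors, divided by their free leading behavior, belong to $\cW_0(\R)$ with norms summable against $C^n/n!$; here Lemma~\ref{lem:Fp} and the asymptotics \eqref{eq:JFasymp}, \eqref{eq:Fatinfty} guarantee that $1/F$ and $F'/F$ are benign on the support of $\chi$, so no analogue of the logarithmic low energy singularity appears. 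Once these uniform bounds are in place, summing the Born series and invoking Lemma~\ref{lem:vC2} term by term yields the stated $\OO(|t|^{-1/2})$ decay.
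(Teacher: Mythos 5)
Your overall strategy --- Born series plus Wiener algebra plus the van der Corput Lemma \ref{lem:vC2} --- is the same as the paper's, but there is a genuine gap in how you propose to sum the series. You claim that ``the $n$-th term carries a factor $C^n/n!$ times an $n$-fold integral of $q$, exactly as in \eqref{eq:phi_n}, \eqref{estpsi}.'' The factorial gain in \eqref{eq:phi_n} and \eqref{estpsi} is a feature of the \emph{Volterra} iterations defining $\phi(k^2,\cdot)$ and $f(k,\cdot)$ (the integrations there run over a simplex), and it is available for every $k\neq 0$. It is \emph{not} available for the expansion of $1/f(k)$ (equivalently of $1/F(k)$), nor for the Born series of the resolvent \eqref{eq:born} that the paper actually uses: there the $n$-th term is an integral over the full product $\R_+^n$, and the only gain per step is the operator bound $\|q\,\cR_{-\frac12}(k^2\pm\I0)\|_{L^1\to L^1}\le C\|q\|_{L^1}/|k|$. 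Consequently the series converges only geometrically, and only for $|k|>C\|q\|_{L^1}$; this is precisely why the cutoff $k_0$ in \eqref{defchi0} must be taken proportional to $\|q\|_{L^1}$, and not, as your write-up suggests, merely to keep away from the low-energy behaviour of $F$. The point matters quantitatively: in the paper the amplitude of the $n$-th oscillatory integral is a product of $n+1$ Fourier transforms of measures of total variation at most $C$ (Lemma \ref{lem:rl}) times $f_n(k)=\chi(k^2)(k/2k_0)^{-n}$ with $\|f_n\|_{H^1}\le C_1(1+n)$, so the $n$-th term is only bounded by $|t|^{-1/2}(1+n)C^{n+1}$, and summability comes \emph{solely} from the extracted geometric factor $(2C)^{-n}$. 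Without that smallness, built into the choice of $k_0$, your series has no reason to converge.

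A secondary, fixable issue is organizational: you propose to expand $f(k,y)$ and $1/f(k)$ separately and then multiply, rather than expanding the full resolvent kernel $\phi(k^2,x)f(k,y)/f(k)$ at once via \eqref{eq:born}. That forces you to control the product of three series (for $\phi$, for $f$, and for $1/F$) and to verify membership in $\cW_0$ with controlled norm for each partial amplitude; the paper's route through the free resolvent kernel $r_{-\frac12}(k;x,y)=k\sqrt{xy}\,J_0(kx)H_0^{(1)}(ky)$ is cleaner because the single Lemma \ref{lem:rl} supplies a total-variation bound uniform in $(x,y)$ for every factor of every term, after which the convolution structure of $\cW(\R)$ and Lemma \ref{lem:vC2} finish the argument. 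If you keep your organization you would still need an analogue of Lemma \ref{lem:rl} for each of your building blocks, and, in any case, the geometric gain from $|k|\ge 2k_0$ described above is indispensable.
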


Our starting point is the fact that the resolvent $\cRH$ of $H$ can be expanded into the Born series
\be\label{eq:born}
\cRH(k^2\pm \I0) = \sum_{n=0}^\infty \cR_{-\frac12}(k^2\pm \I0)(-q\,\cR_{-\frac12}(k^2\pm \I0))^n,
\ee 
where $\cR_{-\frac12}$ stands for the resolvent of the unperturbed radial Schr\"odinger
operator. To this end we begin by collecting some facts about $\cR_{-\frac12}$. Its kernel is given
\[
\cR_{-\frac12}(k^2\pm \I0,x,y) = \frac{1}{k} r_{-\frac12}(\pm k,x,y),
\]
where
\[
r_{-\frac12}(k;x,y)=r_{-\frac12}(k;y,x)= k\sqrt{xy}\,J_{0}(kx)
			 H_{0}^{(1)}(ky),\quad x\le y.
\]

\begin{lemma}\label{lem:rl}
The function $r_{-\frac12}(k,x,y)$ can be written as
\[
r_{-\frac12}(k,x,y) 
	= \chi_{(-\infty,0]}(k) 
	  \int_\R \E^{\I k p} d\rho_{x,y}(p) 
	  + \chi_{[0,\infty)}(k) \int_\R \E^{-\I k p} d\rho_{x,y}^\ast(p)
\]
with a measure whose total variation satisfies
\[
\|\rho_{x,y}\| \le C.
\]
Here $\rho^\ast$ is the complex conjugated measure.
\end{lemma}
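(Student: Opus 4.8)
The plan is to exploit the factorization
\[
r_{-\frac12}(k,x,y) = \sqrt{kx}\,J_0(kx)\cdot\sqrt{ky}\,H_0^{(1)}(ky) = J(kx)\,\mathcal{H}(ky),\qquad k\ge 0,\ 0<x\le y,
\]
where $J(r)=\sqrt r\,J_0(r)$ is precisely the function already analysed in the proof of Theorem~\ref{thm:le2} and $\mathcal{H}(r)=\sqrt r\,H_0^{(1)}(r)$. The point of this splitting is that each factor depends on a single scale ($x$, respectively $y$), so the uniform control of the total variation can be obtained by a scaling argument, and the product will correspond to a convolution of the two resulting measures.

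First I would record the two single-scale representations. By the argument of Theorem~\ref{thm:le2}, after extending $J$ to the negative half-line so that $J(r)=\sqrt{2/\pi}\cos(r-\tfrac\pi4)$ for $r<-1$, the function $J$ is the Fourier transform of a finite measure $\mu$ on $\R$, that is $J(r)=\int_\R \E^{-\I r p}\,d\mu(p)$ with $\|\mu\|<\infty$: the oscillatory tail supplies two Dirac masses and the remainder lies in $\cW_0$ by Lemma~\ref{lem:a.3}. I would then carry out the completely analogous analysis for $\mathcal{H}$. At infinity $\mathcal{H}(r)=\sqrt{2/\pi}\,\E^{\I(r-\pi/4)}+\OO(r^{-1})$, so the oscillatory term contributes one Dirac mass and the $\OO(r^{-1})$ remainder belongs to $\cW_0$; after a suitable extension to $r<0$ this yields $\mathcal{H}(r)=\int_\R \E^{-\I r p}\,d\sigma(p)$ with $\|\sigma\|<\infty$.

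Next, by scaling, $J(kx)=\int_\R \E^{-\I k p}\,d\mu_x(p)$ and $\mathcal{H}(ky)=\int_\R \E^{-\I k p}\,d\sigma_y(p)$, where $\mu_x,\sigma_y$ are the push-forwards of $\mu,\sigma$ under $p\mapsto xp$ and $p\mapsto yp$; crucially $\|\mu_x\|=\|\mu\|$ and $\|\sigma_y\|=\|\sigma\|$ are independent of $x,y$. Since the Fourier transform of a product is the convolution of the transforms, for $k\ge 0$ we obtain $r_{-\frac12}(k,x,y)=\int_\R \E^{-\I k p}\,d(\mu_x*\sigma_y)(p)$ with
\[
\|\mu_x*\sigma_y\| \le \|\mu_x\|\,\|\sigma_y\| = \|\mu\|\,\|\sigma\| =: C,
\]
uniformly in $0<x\le y$. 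This produces the $\chi_{[0,\infty)}$ part of the claimed representation with $\rho_{x,y}^\ast=\mu_x*\sigma_y$. The $\chi_{(-\infty,0]}$ part then follows from the conjugation symmetry $r_{-\frac12}(-k,x,y)=r_{-\frac12}(k,x,y)^\ast$ for $k>0$, which is a consequence of $J_0$ being real and even together with the connection formula $H_0^{(1)}(\E^{\I\pi}z)=-H_0^{(2)}(z)$ and $(H_0^{(1)})^\ast=H_0^{(2)}$ on $\R_+$; since conjugation (and reflection) preserve total variation, the bound $\|\rho_{x,y}\|\le C$ persists.

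The one genuinely new point, and the main thing to check, is the construction of the measure $\sigma$ for $\mathcal{H}$ near the origin. Unlike $J$, the factor $H_0^{(1)}$ carries the logarithmic singularity of $Y_0$, so $\mathcal{H}(r)\sim\tfrac{2\I}{\pi}\sqrt r\,\log r$ as $r\to 0^+$. This is harmless for $\mathcal{H}$ itself (it is continuous and vanishes at $0$), but after subtracting the oscillatory tail the remainder has derivative of size $\OO(r^{-1/2}\log r)$ near $0$, which is \emph{not} square integrable; hence one cannot take $p=2$ in the Wiener-algebra criterion. However $r^{-1/2}\log r\in L^p$ near $0$ for every $p\in(1,2)$, so the remainder still lies in $W^{1,p}$ for such $p$ and Lemma~\ref{lem:a.3} applies exactly as in Theorem~\ref{thm:le2}. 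Once $\sigma$ is in hand, everything else is the bookkeeping indicated above.
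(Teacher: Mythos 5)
Your proposal is correct and follows essentially the same route as the paper: factor $r_{-\frac12}(k,x,y)=J(kx)H(ky)$ with $J(r)=\sqrt r\,J_0(r)$ and $H(r)=\sqrt r\,H_0^{(1)}(r)$, extend both to $r<0$ matching the oscillatory asymptotics, show the remainders lie in $\cW_0(\R)$ via Lemma~\ref{lem:a.3} (using $p\in(1,2)$ for the $\sqrt r\log r$ singularity of $H$, exactly the point the paper also makes), and conclude by scaling and convolution of the resulting finite measures. No gaps; the conjugation step for $k<0$ is the same symmetry the paper uses implicitly.
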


\begin{proof}
Let $x\le y$ and $k\ge 0$. Write
\[
r_{-\frac12}(k,x,y)=  J(k x) H(k y),
\]
where
\[
J(r) =  \sqrt{r}\, J_0(r), 
\quad
H(r) =   \sqrt{r}\, H_0^{(1)}(r).
\]
We continue $J(r)$, $H(r)$ to the region $r<0$ such that they are 
continuously differentiable and satisfy 
\[
J(r)=\sqrt{\frac{2}{\pi}} \cos\left(r-\frac{\pi }{4}\right),\quad
H(r)=\sqrt{\frac{2}{\pi}} \E^{\I\left(r-\frac{\pi }{4}\right)},
\]
for $r<-1$. It's enough to show that 
\[
\ti{J}(r)=J(r)-\sqrt{\frac{2}{\pi}} \cos(r-\frac{\pi }{4})
\quad \text{and}\quad \ti{H}(r)=H(r)-\sqrt{\frac{2}{\pi}} \E^{\I(r-\frac{\pi }{4})}
\]
are elements of the Wiener Algebra $\cW_0(\R)$.
In fact, they are continuously differentiable and hence it suffices to look at their asymptotic behavior.
To do this, we need the results about Bessel and Hankel functions, collected
in Appendix \ref{sec:Bessel}.
For $r<-1$ both $\ti{J}(r)$ and $\ti{H}(r)$ are zero. 
$\ti{J}$ is integrable near $0$ and for $r>1$ it behaves like $O(r^{-1})$ and  
$O(r^{-1})$ for the derivative. So $\ti{J}$ is contained in $H^1(\R)$ and 
therefore in $\cW_0$ by Lemma \ref{lem:a.3}. As for $\ti{H}$, 
near $0$ it behaves like $\sqrt{r}\log{r}$ and hence its derivative belongs to $L^p$ for all $p\in(1,2)$ near zero. 
Since $\ti{H}(r)$ and its derivative also behave like $O(r^{-1})$ for $r>1$,
Lemma \ref{lem:a.3} applies and thus we also have $\ti{H} \in \cW_0$. 
As a consequence,  both $J$ and $H$ are Fourier transforms of finite measures. By scaling the total variation of the 
measures corresponding to $J(k x)$ and $H(k y)$, are independent of $x$ and  
$y$, respectively. This finishes the proof.
\end{proof}

Now we are in position to finish the proof of the main result.

\begin{proof}[Proof of Theorem~\ref{thm:he}]
As a consequence of Lemma~\ref{lem:rl} we note
\[
|\cR_{-\frac12}(k^2\pm \I0,x,y)| \le \frac{C}{|k|}
\]
and hence the operator $q\, \cR_{-\frac12}(k^2\pm \I0)$ is bounded on $L^1$ with
\[
\|q\,\cR_{-\frac12}(k^2\pm \I0)\|_{L^1}\le \frac{C}{ |k|} \|q\|_{L^1}.
\]
Thus we get 
\begin{align*}
\abs{\inner{\cR_{-\frac12}(k^2\pm \I0)(-q\, \cR_{-\frac12}(k^2\pm \I0))^n f}{g}}
& =  \abs{\inner{-q\, \cR_{-\frac12}(k^2\pm \I0))^n f}{\cR_{-\frac12}(k^2\mp \I0)g}}
\\
&\le \norm{(-q\, \cR_{-\frac12}(k^2\pm \I0))^n f}_{L^1}
	 \norm{\cR_{-\frac12}(k^2\mp \I0)g}_{L^\infty}
\\
&\le \frac{C^{n+1}\|q\|_{L^1}^n}{|k|^{n+1}}
	 \norm{f}_{L^1}\norm{g}_{L^1}.
\end{align*}
This estimate holds for all $L^1$ functions $f$ and $g$ and hence the 
series \eqref{eq:born} weakly converges whenever 
$\abs{k}>k_0=C(l)\|q\|_{L^1}$. Namely, for all $L^1$ functions $f$ and $g$ 
we have
\be
\label{eq:born_weak}
\inner{\cRH(k^2\pm \I0)f}{g} 
	= \sum_{n=0}^\infty \inner{\cR_{-\frac12}(k^2\pm \I0)
	  (-q\, \cR_{-\frac12}(k^2\pm \I0))^nf}{g}.
\ee 
Using the estimates \eqref{estphi}, \eqref{estpsi}, \eqref{eq:a.F}, 
and \eqref{eq:Fatinfty} for the Green's function \eqref{eq:GreenH}, 
one can see that 
\[
\cRH(k^2\pm \I0)\, g\in L^\infty
\]
whenever $g\in L^1$ and $|k|>0$. Therefore, we get
\begin{multline*}
\abs{\inner{\cRH(k^2\pm \I0)(-q\, \cR_{-\frac12}(k^2\pm \I0))^nf}{g}}
\\
\begin{aligned}
& =  \abs{\inner{(-q\, \cR_{-\frac12}(k^2\pm \I0))^nf}{\cRH(k^2\mp \I0)g}}
\\
&\le \norm{(-q\, \cR_{-\frac12}(k^2\pm \I0))^nf}_{L^1}
	 \norm{\cRH(k^2\mp \I0)g}_{L^\infty}
\\
&\le \left(\frac{C\norm{q}_{L^1}}{k}\right)^n 
	 \norm{\cRH(k^2\mp \I0)g}_{L^\infty},
\end{aligned}	 
\end{multline*}
which means that $\cRH(k^2\pm \I0)(-q\, \cR_{-\frac12}(k^2\pm \I0))^n$ 
weakly tends to 0 whenever $\abs{k}>k_0$. 

Let us consider again a function $\chi$ as in \eqref{defchi0} with $k_0=C\|q\|_1$.
Since $\E^{\I tH}\chi(H)P_c = \E^{\I tH}\chi(H)$, we get from 
\eqref{PP} 
\begin{align*}
\inner{\E^{-\I tH}\chi(H) f}{g} 
= \frac 1{\pi \I}\int_{-\infty}^\infty   
   \E^{-\I t k^2}\chi(k^2)k\inner{\cRH(k^2+\I 0)f}{g} dk.
\end{align*}
Using  \eqref{eq:born_weak} and noting that we can exchange summation and 
integration, we get
\begin{multline*}
\inner{\E^{-\I tH}\chi(H) f}{g} \\
	= \frac 1{\pi \I}\sum_{n=0}^{\infty} 
	  \int_{-\infty}^\infty\E^{-\I t k^2}\chi(k^2)k
	  \inner{\cR_{-\frac12}(k^2+ \I0)(-q\, \cR_{-\frac12}(k^2+ \I0))^nf}{g} dk.
\end{multline*}
The kernel of the operator $\cR_{-\frac12}(k^2+ \I0)(-q\, \cR_{-\frac12}(k^2+ \I0))^n$ 
is given by 
\[
\frac{1}{k^{n+1}} \int_{\R_+^n} r_{-\frac12}(k;x,y_1)\prod_{i=1}^{n} q(y_i)
	\prod_{i=1}^{n-1} r_{-\frac12}(k;y_i,y_{i+1}) r_{-\frac12}(k;y_n,y)dy_1\cdots dy_n.
\]  
Applying Fubini's theorem, we can integrate in $k$ first and hence we 
need to obtain a uniform estimate of the oscillatory integral
\[
I_n(t;u_0,\ldots,u_{n+1}) 
	= \int_{\R} \E^{-\I t k^2}\chi(k^2)
	   \left(\frac{k}{2k_0}\right)^{-n}
	   \prod_{i=0}^{n}r_{-\frac12}(k;u_i,u_{i+1})\,dk
\]
since, recalling that $k_0=C(l)\|q\|_{L^1}$, one obtains
\begin{align*}
\abs{\inner{\E^{-\I tH}\chi(H) f}{g}}
	\le \frac{1}{\pi}\sum_{n=0}^{\infty} \frac{1}{(2C)^n}
		\!\!\sup_{\{u_i\}_{i=0}^{n+1}}
		\abss{I_n(t;u_0,\ldots,u_{n+1})}
		\norm{f}_{L^1}\norm{g}_{L^1}.
\end{align*}
Consider the function $f_n(k)=\chi(k^2) \left(\frac{k}{2k_0}\right)^{-n}$. Clearly, 
$f_0$ is the Fourier transform of a measure $\nu_0$ satisfying $\|\nu_0\| \le C_1$.
For $n\ge 1$, $f_n$ belongs to  $H^1(\R)$ with $\|f_n\|_{H^1} \le \pi^{-1/2} C_1(1+n)$. Hence
by Lemma~\ref{lem:vC2} and Lemma~\ref{lem:rl} we obtain
\[
\abss{I_n(t;u_0,\ldots,u_{n+1})}\le \frac{2 C_v C_1}{\sqrt{t}} (1+n) C^{n+1}
\]
implying
\[
\abs{\inner{\E^{-\I tH}\chi(H) f}{g}}\\
	\le \frac{2 C_v C_1 \, C}{\sqrt{t}} \norm{f}_{L^1}\norm{g}_{L^1} \sum_{n=0}^{\infty} \frac{1+n}{2^n}.
\]
This proves Theorem~\ref{thm:he}.
\end{proof}

\appendix

\section{The van der Corput Lemma}\label{sec:vdCorput}

We will need the the following variant of the van der Corput 
lemma (see, e.g., \cite[Lemma A.2]{ktt} and \cite[page 334]{St}).

\begin{lemma}\label{lem:vC2}
Let $(a,b)\subseteq\R$ and consider the oscillatory integral
\[
I(t) = \int_a^b \E^{\I t k^2} A(k) dk.
\]
If $A \in \cW(\R)$, i.e., $A$ is the Fourier transform of a signed measure
\[
A(k) = \int_\R \E^{\I k p} d\alpha(p),
\]
then the above integral exists as an improper integral and satisfies
\begin{equation*}
\abs{I(t)} 
	\le C_2 \abs{t}^{-\frac{1}{2}} \norm{A}_\cW, 
	\quad \abs{t}>0.
\end{equation*}
where $\norm{A}_\cW:=\norm{\alpha}=\abs{\alpha}(\R)$ denotes the total variation of $\alpha$ and
$C_2\le 2^{8/3}$ is a universal constant.
\end{lemma}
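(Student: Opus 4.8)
The plan is to reduce the estimate, in which the amplitude $A$ is merely bounded (indeed $\norms{A}_\infty \le \norms{\alpha} = \norms{A}_{\cW}$), to the pure quadratic–phase bound supplied by the classical second–derivative van der Corput lemma: if $\phi\in C^2(a,b)$ satisfies $\phi''(k)\ge \lambda>0$ throughout $(a,b)$, then $\abs{\int_a^b \E^{\I\phi(k)}dk}\le C_{vdC}\,\lambda^{-1/2}$ with $C_{vdC}$ independent of $a$, $b$, $\phi$ and $\lambda$ (see \cite[p.~334]{St}). First I would insert the Fourier–Stieltjes representation $A(k)=\int_\R \E^{\I kp}\,d\alpha(p)$ and interchange the order of integration, so that over a bounded interval $I(t)=\int_\R\big(\int_a^b \E^{\I(tk^2+kp)}\,dk\big)\,d\alpha(p)$.

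The key step is to complete the square in the inner phase: writing $tk^2+kp = t\big(k+\tfrac{p}{2t}\big)^2-\tfrac{p^2}{4t}$ (assume $t>0$; the case $t<0$ follows by complex conjugation), the inner integral becomes $\E^{-\I p^2/(4t)}\int_a^b \E^{\I t(k+p/(2t))^2}\,dk$. The phase now has second derivative $2t=2\abs{t}$, \emph{independent} of the shift $p$, so the classical van der Corput lemma applies with $\lambda = 2\abs{t}$ and yields the bound $C_{vdC}(2\abs{t})^{-1/2}$ uniformly in $p$ and in the endpoints $a,b$. Integrating against $d\abs{\alpha}(p)$ then gives
\[
\abs{I(t)}\le C_{vdC}(2\abs{t})^{-1/2}\,\abs{\alpha}(\R) = \frac{C_{vdC}}{\sqrt2}\,\abs{t}^{-1/2}\,\norm{A}_{\cW},
\]
so that $C_2 = C_{vdC}/\sqrt2 = 2^{5/2}$ using the classical value $C_{vdC}=8$ for the second–derivative estimate; in particular $C_2\le 2^{8/3}$.

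The main obstacle is not any single estimate but the justification of the two limiting arguments, since $A$ need not be integrable over the (possibly unbounded) interval $(a,b)$. For the Fubini interchange on a bounded interval, the double integral is absolutely convergent because $\abs{\alpha}$ is finite and the $k$–integrand is bounded. For the improper integral over an unbounded interval I would argue on the measure side: for fixed $p$ the limit $\lim_{a\to-\infty,\,b\to+\infty}\int_a^b \E^{\I(tk^2+kp)}\,dk$ exists (it is an ordinary Fresnel–type oscillatory integral), and the uniform bound $C_{vdC}(2\abs{t})^{-1/2}$ furnishes a dominating constant that is $\abs{\alpha}$–integrable; dominated convergence then shows that the improper integral $I(t)$ exists and still satisfies the displayed estimate. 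The only subtlety is thus the harmless bookkeeping of these limits, all controlled by the single uniform-in-$p$ van der Corput bound obtained after completing the square.
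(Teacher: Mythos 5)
Your proof is correct and follows essentially the same route as the paper's source for this lemma: the paper does not reprove it but refers to \cite[Lemma A.2]{ktt} and \cite[p.~334]{St}, where precisely this reduction is carried out --- Fubini with the Fourier--Stieltjes representation of $A$, completing the square in the phase, and the uniform (in the shift $p$ and in the endpoints) second-derivative van der Corput bound for the resulting Fresnel integral, followed by dominated convergence to handle the improper integral. Your constant $2^{5/2}$ is consistent with the stated bound $C_2\le 2^{8/3}$.
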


Note that if $A_1$, $A_2\in \cW(\R)$, then 
(cf.\ p. 208 in \cite{bo})
\[
(A_1 A_2)(k)= \frac{1}{(2\pi)^2} \int_\R \E^{\I k p} d(\alpha_1*\alpha_2)(p)
\]
is associated with the convolution
\[
\alpha_1 * \alpha_2(\Omega) = \iint \id_\Omega(x+y) d\alpha_1(x) d\alpha_2(y),
\]
where $\id_\Omega$ is the indicator function of a set $\Omega$. 
Note that
\[
\|\alpha_1 * \alpha_2\| \le \|\alpha_1\| \|\alpha_2\|.
\]

Let $\cW_0(\R)$ be the Wiener algebra of functions $C(\R)$ which are Fourier transforms of $L^1$ functions,
\[
\cW_0(\R) = \Big\{f\in C(\R)\colon f(k)=\int_{\R}\E^{\I kx} g(x)dx,\ g\in L^1(\R)\Big\}.
\]
Clearly, $\cW_0(\R)\subset \cW(\R)$. Moreover, by the Riemann--Lebesgue lemma, $f\in C_0(\R)$, that is, $f(k)\to 0$ as $k\to \infty$ if $f\in\cW_0(\R)$. A comprehensive survey of necessary and sufficient conditions for $f\in C(\R)$ to be in the Wiener algebras $\cW_0(\R)$ and $\cW(\R)$ can be found in \cite{lst12}, \cite{lt11}.  We need the following statement, which extends the well-known Beurling condition (see \cite[Lemma B.3]{HKT}). 

\begin{lemma}\label{lem:a.3} 
If $f\in L^2(\R)$ is locally absolutely continuous and $f'\in L^p(\R)$ with $p\in (1,2]$, then $f$ is in the Wiener algebra $\cW_0(\R)$ and
\be\label{eq:embedding}
\|f\|_{\cW} \le C_p\big( \|f\|_{L^2(\R)} + \|f'\|_{L^p(\R)}\big),
\ee
where $C_p>0$ is a positive constant, which depends only on $p$.
\end{lemma}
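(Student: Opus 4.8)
The plan is to show directly that the (ordinary) Fourier transform of $f$ belongs to $L^1(\R)$; once this is established, the Fourier inversion formula exhibits $f$ as the Fourier transform of an $L^1$ function, so that $f\in\cW_0(\R)$, and the Wiener norm $\|f\|_{\cW}$ is a fixed constant multiple of the $L^1$-norm of this transform. Write $\hat f$ for the Fourier transform of $f$, initially defined via Plancherel so that $\hat f\in L^2(\R)$ with $\|\hat f\|_{L^2}=c\,\|f\|_{L^2}$. The entire argument then reduces to estimating $\int_\R |\hat f(\xi)|\,d\xi$.

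The key use of the hypothesis $f'\in L^p(\R)$ is that the Fourier transform converts differentiation into multiplication: interpreting $f\in L^2(\R)$ as a tempered distribution whose distributional derivative coincides with the locally integrable pointwise derivative $f'$, one has $\widehat{f'}(\xi)=\I\,\xi\,\hat f(\xi)$. Since $p\in(1,2]$, the Hausdorff--Young inequality applies to $f'$ and yields $\widehat{f'}\in L^{p'}(\R)$ with $\|\widehat{f'}\|_{L^{p'}}\le C\,\|f'\|_{L^p}$, where $p'\in[2,\infty)$ is the conjugate exponent. Equivalently, the function $\xi\mapsto \xi\,\hat f(\xi)$ lies in $L^{p'}(\R)$ with norm controlled by $\|f'\|_{L^p}$.

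I would then split the frequency integral at $|\xi|=1$. On the low-frequency region I would use Cauchy--Schwarz together with Plancherel,
\[
\int_{|\xi|\le 1} |\hat f(\xi)|\,d\xi \le \Big(\int_{|\xi|\le 1}|\hat f(\xi)|^2\,d\xi\Big)^{1/2}\Big(\int_{|\xi|\le 1} d\xi\Big)^{1/2} \le \sqrt{2}\,\|\hat f\|_{L^2}=C\,\|f\|_{L^2}.
\]
On the high-frequency region I would write $|\hat f(\xi)|=|\xi\,\hat f(\xi)|\cdot|\xi|^{-1}$ and apply H\"older's inequality with the pair $(p',p)$,
\[
\int_{|\xi|>1} |\hat f(\xi)|\,d\xi \le \|\xi\,\hat f\|_{L^{p'}}\Big(\int_{|\xi|>1}|\xi|^{-p}\,d\xi\Big)^{1/p},
\]
where the last integral is finite precisely because $p>1$, with value depending only on $p$. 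Adding the two contributions gives $\|\hat f\|_{L^1}\le C_p\big(\|f\|_{L^2}+\|f'\|_{L^p}\big)$, which is the asserted embedding \eqref{eq:embedding}.

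The only genuinely delicate point is the justification of the identity $\widehat{f'}=\I\,\xi\,\hat f$ and of Hausdorff--Young at this level of generality, namely that the ``differentiation as multiplication'' rule is valid for the $L^2$-Fourier transform of a function that is merely locally absolutely continuous with $L^p$ derivative. This is handled entirely in the sense of tempered distributions: $f\in L^2(\R)$ is tempered, its distributional derivative equals $f'$ almost everywhere by local absolute continuity, and $\widehat{f'}=\I\,\xi\,\hat f$ holds as tempered distributions; since $f'\in L^p$ with $p\le 2$, its transform is a genuine $L^{p'}$ function, so $\I\,\xi\,\hat f$ is identified with this function and the pointwise manipulations above are legitimate. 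It is worth noting that the restriction $p>1$ enters in an essential way, since it is exactly what renders $|\xi|^{-p}$ integrable at infinity; the endpoint $p=1$ cannot be admitted.
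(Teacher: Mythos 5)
Your proof is correct: the splitting at $|\xi|=1$, with Plancherel and Cauchy--Schwarz on low frequencies and Hausdorff--Young plus H\"older (using $|\xi|^{-1}\in L^p(\{|\xi|>1\})$ for $p>1$) on high frequencies, is the standard Beurling-type argument, and the distributional justification of $\widehat{f'}=\I\xi\hat f$ is handled appropriately. The paper itself gives no proof of this lemma --- it cites Lemma B.3 of \cite{HKT} --- so there is nothing to compare against beyond noting that your argument is exactly the expected one.
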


We also need the following result from \cite{lt11}.

\begin{lemma}\label{lem:litr}
Let $f\in C_0(\R)$ be locally absolutely continuous on $\R\setminus\{0\}$. Set 
\be\label{eq:f01}
f_0(x):= \sup_{|y|\ge |x|} |f(y)|, \quad f_1(x):={\rm ess\, sup}_{|y|\ge |x|} |f'(y)|,
\ee
for all $x\neq 0$. If 
\be\label{eq:ff01}
\int_0^1 \log\big(2/x\big) f_1(x)dx <\infty,\quad  \int_1^\infty \left(\int_x^\infty f_0(y) f_1(y) dy \right)^{1/2} dx<\infty,
\ee
then $f\in \cW_0(\R)$.
\end{lemma}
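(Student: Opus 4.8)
The plan is to prove directly that $\check f \in L^1(\R)$, where $\check f(t) = \frac{1}{2\pi}\int_\R f(k)\E^{\I k t}\,dk$; since $f \in C_0(\R)$, Fourier inversion then exhibits $f$ as the Fourier transform of an $L^1$ function, so that $f \in \cW_0(\R)$. A preliminary observation is that \eqref{eq:ff01} already forces $f \in L^1(\R)$: integrating the elementary identity $\tfrac12 f(x)^2 = -\int_x^\infty f(s)f'(s)\,ds$ and using $\abs{f}\le f_0$, $\abs{f'}\le f_1$ pointwise gives $f_0(x)^2 \le 2\int_x^\infty f_0(s)f_1(s)\,ds$, whence $f_0(x) \le \sqrt2\,\bigl(\int_x^\infty f_0 f_1\bigr)^{1/2}$ and, by the second condition in \eqref{eq:ff01}, $f_0 \in L^1(1,\infty)$. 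As $f$ is bounded near $0$, this yields $f\in L^1(\R)$ and in particular $f\in L^2(\R)$. Consequently $\check f$ is bounded and continuous, so $\int_{\abs t \le 1}\abs{\check f}\,dt < \infty$ trivially, and the whole problem reduces to estimating $\int_{\abs t > 1}\abs{\check f(t)}\,dt$.

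To treat $\abs t > 1$ I would isolate the singularity of $f'$ at $k = 0$ from the decay at infinity by a smooth cutoff $f = \psi f + (1-\psi)f$ with $\psi \equiv 1$ near $0$ and $\supp\psi \subset [-2,2]$. For the near part $g := \psi f$ (which lies in $W^{1,1}$, since $\int_0^1 f_1 < \infty$ by the first condition in \eqref{eq:ff01}) the basic device is the half-period reflection $\E^{\I(k+\pi/t)t} = -\E^{\I k t}$, which gives $\check g(t) = \frac{1}{4\pi}\int_\R\bigl(g(k)-g(k-\tfrac{\pi}{t})\bigr)\E^{\I k t}\,dk$. Combined with one integration by parts (legitimate because $g$ is continuous through $0$), this leads to a bound of the form $\abs{\check g(t)} \le \frac{C}{\abs t}\,\omega\!\bigl(g',\tfrac{\pi}{\abs t}\bigr)_{L^1}$, where $\omega(\,\cdot\,,h)_{L^1}$ is the $L^1$ modulus of continuity. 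Substituting $h = \pi/t$ turns $\int_1^\infty\abs{\check g}\,dt$ into $\tfrac{C}{\pi}\int_0^\pi h^{-1}\omega(g',h)_{L^1}\,dh$; estimating the modulus by the near-origin mass of the majorant, $\omega(g',h)_{L^1}\le C\int_0^{Ch} f_1(s)\,ds$, and applying Tonelli's theorem produces exactly $\int_0^1 f_1(s)\bigl(\int_{s/C}^\pi \tfrac{dh}{h}\bigr)ds \approx \int_0^1 \log(2/s)\,f_1(s)\,ds$. This is the transparent half of the argument and, in particular, it explains the precise logarithmic weight in the first hypothesis of \eqref{eq:ff01}.

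For the far part $(1-\psi)f$, supported in $\abs k \ge 1$ and decaying at infinity, a single integration by parts only yields a non-integrable $\abs t^{-1}$ bound, so the true decay has to be read off from the second condition in \eqref{eq:ff01}. Here I would decompose dyadically in $t$ and apply Cauchy--Schwarz, $\int_{2^m \le \abs t < 2^{m+1}}\abs{\check f}\,dt \le 2^{m/2}\bigl(\int_{\abs t \ge 2^m}\abs{\check f}^2\,dt\bigr)^{1/2}$, and then feed the amplitude bound $f_0(x)^2 \le 2\int_x^\infty f_0 f_1$ into the resulting tail estimate. The factor $\bigl(\int_x^\infty f_0 f_1\bigr)^{1/2}$ in \eqref{eq:ff01} is designed precisely to control such tails: it arises from pairing the amplitude $f_0$ against the derivative (oscillation) information carried by $f_1$, and summing the dyadic contributions reproduces $\int_1^\infty(\int_x^\infty f_0 f_1)^{1/2}\,dx$.

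The hard part is to make this final tail estimate rigorous for arbitrary $f$. The naive route through Plancherel, $\norm{t\,\check f}_{L^2} = c\,\norm{f'}_{L^2}$, is unavailable: $f'$ need not belong to $L^2$ near $k=0$ (only $\int_0^1\log(2/x)f_1\,dx<\infty$ is assumed, not $\int_0^1 f_1^2$), nor near infinity, where a rapidly oscillating $f$ with slowly decaying amplitude can satisfy \eqref{eq:ff01} yet have $f'\notin L^2$. For the same reason the convenient bounds used above---$\omega(g',h)_{L^1}\le C\int_0^{Ch}f_1$ in the near regime and the sharp $L^2$-tail bound in the far regime---hold as stated only under an additional (e.g.\ eventual monotonicity) assumption on $f$ and $f'$; for oscillatory derivatives they fail, and one must instead exploit the fact that the oscillation improves the decay of $\check f$. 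Capturing this using only the bare majorants $f_0$ and $f_1$ is exactly the content of the general--monotonicity / sharp weighted--transform techniques of \cite{lt11}, which is why we quote the statement rather than reproduce its proof.
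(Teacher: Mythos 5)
First, a structural point: the paper offers no proof of this lemma at all --- it is imported verbatim from \cite{lt11} (``We also need the following result from \cite{lt11}''), so there is no internal argument to compare yours against. Judged on its own terms, your proposal is an outline rather than a proof, and you concede as much in your final paragraph. The parts that do work: the identity $\tfrac12 f(x)^2=-\int_x^\infty f f'$ combined with the fact that $f_0,f_1$ are by construction non-increasing in $|x|$ gives $f_0(x)^2\le 2\int_{|x|}^\infty f_0f_1$, hence $f_0\in L^1(1,\infty)$ by the second condition in \eqref{eq:ff01} and $f\in L^1(\R)\cap L^\infty(\R)$; this reduction to estimating $\int_{|t|>1}|\check f|$ is correct. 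The change of variables $h=\pi/t$ explaining why $\log(2/x)$ is exactly the right weight in the first condition of \eqref{eq:ff01} is also a genuine insight.

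The gaps are real, however, and you have located them yourself. (i) In the near-origin part, the bound $\omega(g',h)_{L^1}\le C\int_0^{Ch}f_1$ fails for general $g'$: outside a $Ch$-neighbourhood of the origin the increment $\int_{h}^{2}|g'(k+\delta)-g'(k)|\,dk$ is controlled only by $2\int_h^2 f_1$, which need not be $O\bigl(\int_0^{Ch}f_1\bigr)$ when $g'$ oscillates; note also that $g\in W^{1,1}$ alone does not place $g$ in $\cW_0(\R)$, so this step cannot be bypassed. (ii) In the far part, the dyadic Cauchy--Schwarz step needs a decaying bound on $\int_{|t|\ge 2^m}|\check f|^2$, and the only elementary route to it (Plancherel applied to $f'$) is unavailable since $f'\notin L^2$ in general; the substitute --- extracting decay of $\check f$ from the pairing of the amplitude majorant $f_0$ with the derivative majorant $f_1$ --- is precisely the content of the general-monotonicity arguments of \cite{lt11} and is not supplied. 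So the proposal does not constitute a proof; its honest conclusion, that the statement should be quoted from \cite{lt11} rather than re-derived, coincides with what the paper in fact does.
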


\section{Bessel functions}\label{sec:Bessel}

Here we collect basic formulas and information on Bessel and Hankel functions (see, e.g., \cite{dlmf, Wat}).
First of all assume $m\in \N_0$. We start with the definitions:
\begin{align}
J_{m}(z)
&=\left( \frac{z}{2} \right)^m \sum_{n=0}^\infty \frac{(\frac{-z^2}{4})^n}{n!(n+m+1)!},\label{eq:Jnu01}
	\\
Y_{m}(z)
&= -\frac{\left( \frac{-z}{2} \right)^{-m}}{\pi} \sum_{n=0}^{m-1} \frac{(m-n-1)!(\frac{z^2}{4})^n}{n!}  +\frac{2}{\pi} \log (z/2) J_m(z) \nonumber \\
&\quad +\frac{\left( \frac{z}{2} \right)^{m}}{\pi} \sum_{n=0}^{\infty} \left(\psi(n+1) + \psi(n+m+1) \right)\frac{(\frac{-z^2}{4})^n}{n!(n+m+1)!},\label{eq:Ynu01}
	\\
H_{m}^{(1)}(z) 
&= J_{m}(z) + \I Y_{m}(z), \qquad
H_{m}^{(2)}(z) 
= J_{m}(z) - \I Y_{m}(z).
\end{align}
Here $\psi$ is the digamma function \dlmf{5.2.2}. 
The asymptotic behavior as $|z|\to\infty$ is given by 
\begin{align}
J_{m}(z) 
	&= \sqrt{\frac{2}{\pi z}}\left(\cos(z- \pi m/2- \pi/4) 
	   + \E^{|\im z|}\OO(|z|^{-1})\right),
	   \quad |\arg z|<\pi\label{eq:asymp-J-infty},
	\\
Y_{m}(z) 
	&= \sqrt{\frac{2}{\pi z}}\left(\sin(z- \pi m/2- \pi/4) 
	   + \E^{|\im z|}\OO(|z|^{-1})\right),
	   \quad \abss{\arg z}<\pi\label{eq:asymp-Y-infty},
	 \\
H_{m}^{(1)}(z)
	&= \sqrt{\frac{2}{\pi z}}\E^{i(z -  \frac{2m+1}{4}\pi)}\left(1 + \OO(|z|^{-1})\right),
	   \quad -\pi<\arg z<2\pi,\label{eq:asymp-H1-infty}
	   \\
H_{m}^{(2)}(z)
	&= \sqrt{\frac{2}{\pi z}}\E^{-i(z - \frac{2m+1}{4}\pi)}\left(1 + \OO(|z|^{-1})\right),
	   \quad -2\pi<\arg z<\pi.\label{eq:asymp-H2-infty}	    
\end{align}
Using \dlmf{10.6.2}, 
one can show that the derivative of the reminder satisfies
\be\label{eq:a11}
\left(\sqrt{\frac{\pi z}{2}}J_{0}(z) - \cos(z - \pi/4) \right)'= \E^{|\im z|}\OO(|z|^{-1}),
\ee
as $|z|\to \infty$. The same is true for $Y_m$, $H_m^{(1)}$ and $H_m^{(2)}$.

\bigskip
\noindent
{\bf Acknowledgments.} We thank Vladislav Kravchenko and Sergii Torba for providing us with the paper \cite{soh}. We are also grateful to Iryna Egorova for the copy of A.\ S.\ Sohin's PhD thesis.


\end{document}